\newcommand{\Red}{\textcolor{red}}
\newtheorem{lemma}{Lemma}[section]
\newtheorem{theorem}{Theorem}[section]
\newtheorem{definition}{Definition}[section]
\newtheorem{corollary}{Corollary}[section]
\numberwithin{equation}{section}
\newcommand{\dis}{\displaystyle}
\newcommand{\B}{\mathbb{B}}
\newcommand{\R}{\mathbb{R}}
\renewcommand{\S}{\mathbb{S}}
\newcommand{\T}{\mathbb{T}}
\newcommand{\highG}{\Theta}
\newcommand{\highB}{\Lambda}
\newcommand{\FP}{\mathbf{P}}
\newcommand{\FI}{\mathbf{I}}
\newcommand{\FX}{\mathbf{X}}
\newcommand{\CD}{\mathcal{D}}
\newcommand{\CE}{\mathcal{E}}
\newcommand{\CF}{\mathcal{F}}
\newcommand{\CS}{\mathcal{S}}
\newcommand{\na}{\nabla}
\newcommand{\be}{\beta}
\newcommand{\ga}{\gamma}
\newcommand{\om}{\omega}
\newcommand{\la}{\lambda}
\newcommand{\de}{\delta}
\newcommand{\pa}{\partial}
\newcommand{\ka}{\kappa}
\newcommand{\eps}{\epsilon}
\newcommand{\De}{\Delta}
\newcommand{\Ga}{\Gamma}
\begin{document}

\title[Boltzmann equation in Critical Besov Space]{Global well-posedness in spatially critical Besov space for the Boltzmann equation}

\author[R.-J. Duan]{Renjun Duan}
\address[RJD]{Department of Mathematics, The Chinese University of Hong Kong,
Shatin, Hong Kong, P.R.~China}
\email{rjduan@math.cuhk.edu.hk}

\author[S.-Q. Liu]{Shuangqian Liu}
\address[SQL]{Department of Mathematics, Jinan Unviersity, Guangzhou 510632, P.R.~China}
\email{shqliusx@163.com}

\author[J. Xu]{Jiang Xu}
\address[JX]{Department of mathematics, Nanjing
University of Aeronautics and Astronautics,
Nanjing 211106, P.R.China}
\email{jiangxu\underline{ }79@nuaa.edu.cn}

\date{\today}

\maketitle
\begin{abstract}
The unique global strong solution in the Chemin-Lerner type space to the Cauchy problem on the Boltzmann equation for hard potentials is constructed in perturbation framework. Such solution space is of critical regularity with respect to spatial variable, and it can  capture the  intrinsic property of the Botlzmann equation. For the proof of global well-posedness, we develop some new estimates  on the nonlinear  collision term through the Littlewood-Paley theory.
\end{abstract}

\tableofcontents

\thispagestyle{empty}

\section{Introduction}

There have been extensive studies of the global well-posedness for the Cauchy problem on the Boltzmann equation. Basically two kinds of global solutions can be established in terms of different approaches. One kind is the renormalized solution by the weak stability method \cite{DL}, where initial data can be of the large size and of no regularity. The uniqueness for such solution still remains open. The other kind is the perturbative solution by either the spectrum method \cite{NI,Sh,U74,U-s,UY-AA} or the energy method \cite{Guo-IUMJ,Guo-L, Liu-Yang-Yu, LY-S}, where initial data is assumed to be sufficiently close to Maxwellians. In general solutions uniquely exist in the perturbation framework.  It is a fundamental problem in the theory of the Boltzmann equation to find a function space with minimal regularity for the global existence and uniqueness of solutions. The paper aims at presenting a function space whose spatial variable belongs to the critical Besov space $B^{3/2}_{2,1}$ in dimensions three. The motivation to consider function spaces with spatially critical regularity is inspired by their many existing applications in the study of the fluid dynamical equations \cite{BCD,D1,XW}, see also  the recent work \cite{XK} on the general hyperbolic symmetric conservation laws with relaxations. Indeed it will be seen in the paper that the Boltzmann equation at the kinetic level  shares a similar dissipative structure in the so-called critical Chemin-Lerner type space (cf.~\cite{CL}) with those at the fluid level. Specifically, using the Littlewood-Paley theory, we etablish the global well-posedness of solutions in such function space for the angular cutoff hard potentials. It would remain an interesting and challenging problem to extend the current result to other situations, such as soft potentials \cite{Guo-BE-s,SG}, angular non cutoff \cite{AMUXY-ARMA,GR} and the appearance of self-consistent force \cite{DS-VPB}.

The Boltzmann equation in dimensions three (cf.~\cite{CIP-Book,Villani1}), which is used to describe the time evolution of the unknown velocity distribution function $F=F(t,x,\xi)\geq 0$ of particles with position
$x=(x_1,x_2,x_3)\in\R^3$ and velocity
$\xi=(\xi_1,\xi_2,\xi_3)\in\R^3$ at time $t\geq 0$, reads
\begin{equation}
\label{Beq}
\pa_t F +\xi\cdot \na_x F =Q(F,F).
\end{equation}
Initial data $F(0,x,\xi)=F_0(x,\xi)$ is given. $Q(\cdot,\cdot)$ is the bilinear Boltzmann collision operator, defined by
$$
Q(F,H)=\int_{\R^3} d\xi_\ast \int_{\S^2} d \omega \,|\xi-\xi_\ast|^\ga B_0(\theta)
\left(F
_\ast'H'-F_\ast H\right),
$$
where
\begin{eqnarray*}
&\dis F_\ast'=F(t,x,\xi_\ast'),\ \ H'=H(t,x,\xi'),\ \ F_\ast =F(t,x,\xi_\ast),\ \ H=H(t,x,\xi),
\end{eqnarray*}
with
 $$
 \xi'=\xi-\bigl((\xi-\xi_*)\cdot\omega\bigr)\,\omega,\ \
\xi'_*=\xi_*+\bigl((\xi-\xi_*)\cdot\omega\bigr)\,\omega,
$$
and $\theta$ is given by $\cos\theta=\omega\cdot (\xi-\xi_*)/|\xi-\xi_*|$.
The collision kernel $|\xi-\xi_\ast|^\ga B_0(\theta)$ is
determined by
the interaction law between particles. Through the paper, we assume
$0\leq \ga\leq1$ and $0\leq B_0(\theta)\leq C |\cos\theta|$ for a constant $C$, and this includes the hard potentials with angular cutoff as an example, cf.~\cite{Gr}.

In the paper, we study the solution of the Boltzmann equation \eqref{Beq} around the global Maxwellian
$$
\mu=\mu(\xi)=(2\pi)^{ -3/2 }e^{-|\xi|^2/2},
$$
which has been normalized to have zero bulk velocity and unit density and temperature.  For this purpose,  we set the
perturbation $f=f(t,x,\xi)$ by
$
F=\mu+\mu^{1/2}f$.
Then  \eqref{Beq} can be reformulated as
\begin{equation}\label{f.eq.}
\partial_{t}f+\xi\cdot\nabla_{x}f+Lf=\Gamma(f,f),
\end{equation}
with initial data $f(0,x,\xi)=f_{0}(x,\xi)$ given by $F_0=\mu+\mu^{1/2}f_0$. Here $L f$, $\Ga(f,f)$ are the linearized and nonlinear collision terms, respectively, and their precise expressions will be given later on. Recall that $L$ is nonnegative-definite on $L^2_\xi$, and $\ker L$ is spanned by five elements $\sqrt{\mu}$, $\xi_i\sqrt{\mu}$ $(1\leq i\leq 3)$, and $|\xi|^{2}\sqrt{\mu}$ in $L^2_\xi$. For later use, define the macroscopic projection of $f(t,x,\xi)$ by
\begin{equation}\label{Pf}
{\bf P}f=\left\{a(t,x)+\xi\cdot
b(t,x)+\left(|\xi|^2-3\right)c(t,x)\right\}\sqrt{\mu},
\end{equation}
where for the notational brevity we have skipped the dependence of coefficient functions on $f$. Then, the function $f(t,x,\xi)$  can be decomposed as
$
f ={\bf P}f+\{\FI- \FP\}f.
$

We now state the main result of the paper. The norms and other notations below  will be made precise in the next section. We define the energy functional and energy dissipation rate, respectively, as
\begin{eqnarray}
\mathcal {E}_T(f)\sim
\|f\|_{\widetilde{L}^{\infty}_T\widetilde{L}^{2}_{\xi}(B^{3/2}_x)},
\label{def.et}
\end{eqnarray}
and
\begin{eqnarray}
\mathcal {D}_T(f)=\|\na_x(a,b,c)\|_{\widetilde{L}^{2}_T(B^{1/2}_x)}+
\|\{\FI-\FP\}f\|_{\widetilde{L}^{2}_T\widetilde{L}^{2}_{\xi,\nu}(B^{3/2}_x)}.
\label{def.dt}
\end{eqnarray}

\begin{theorem}\label{main.th.}
There are $\eps_0>0$, $C>0$ such that if
$$
\|f_0\|_{\widetilde{L}^{2}_{\xi}(B^{3/2}_x)}\leq \eps_0,
$$
then there exists a unique global strong solution $f(t,x,\xi)$ to the Boltzmann equation (\ref{f.eq.}) with initial data $f(0,x,\xi)=f_0(x,\xi)$, satisfying
\begin{equation}\label{global.eng}
\mathcal {E}_T(f)+\mathcal {D}_T(f)\leq C\|f_0\|_{\widetilde{L}^{2}_{\xi}(B^{3/2}_x)},
\end{equation}
for any $T>0$.
Moreover, if $F_0(x,\xi)=\mu+\mu^{1/2}f_{0}(x,\xi)\geq0$, then
$F(t,x,\xi)=\mu+\mu^{1/2}f(t,x,\xi)\geq0.$
\end{theorem}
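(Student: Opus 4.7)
The plan is to reduce the theorem to a uniform a priori bound on $\mathcal{E}_T(f)+\mathcal{D}_T(f)$ in terms of $\|f_0\|_{\widetilde{L}^{2}_{\xi}(B^{3/2}_x)}$, which, combined with a Friedrichs-type approximation and a standard continuity argument, yields a global strong solution for small data. Uniqueness follows from the same estimate applied to the difference of two solutions, and non-negativity of $F=\mu+\mu^{1/2}f$ is inherited from a positivity-preserving iteration written directly on $F$, the a priori bound just established supplying the compactness needed to pass to the limit.

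The a priori estimate starts from the Littlewood-Paley decomposition in $x$. Applying the dyadic block $\Delta_q$ to \eqref{f.eq.} and testing against $\Delta_q f$ in $L^2_{x,\xi}$, together with the spectral coercivity $\langle Lg,g\rangle_{L^2_\xi}\gtrsim \|\{\FI-\FP\}g\|_\nu^2$, yields at each level
\begin{equation*}
\tfrac12\frac{d}{dt}\|\Delta_q f\|_{L^2_{x,\xi}}^2 + c_0\,\|\{\FI-\FP\}\Delta_q f\|_{L^2_x L^2_\nu}^2 \le \bigl|\langle \Delta_q \Gamma(f,f),\Delta_q f\rangle_{L^2_{x,\xi}}\bigr|.
\end{equation*}
Integrating in time, taking square roots, weighting by $2^{3q/2}$, taking the $L^\infty_t$ (respectively $L^2_t$) norm, and summing in $\ell^1_q$ produces $\mathcal{E}_T(f)$ together with the microscopic part of $\mathcal{D}_T(f)$, controlled by $\|f_0\|_{\widetilde{L}^{2}_{\xi}(B^{3/2}_x)}$ plus a Chemin-Lerner norm of $\Gamma(f,f)$. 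The hydrodynamic dissipation on $\nabla_x(a,b,c)$ is extracted by the macroscopic-microscopic argument of Guo: testing \eqref{f.eq.} against suitable velocity moments $\psi_i(\xi)\sqrt{\mu}$ yields a fluid-type system for $(a,b,c)$, and carrying this computation out blockwise and weighting by $2^{q/2}$ for $B^{1/2}_x$ produces the first term of $\mathcal{D}_T(f)$, modulo contributions absorbed by the microscopic dissipation and the nonlinearity.

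The core technical difficulty is the trilinear estimate showing that, in the Chemin-Lerner framework, the contribution of $\langle\Delta_q\Gamma(f,f),\Delta_q f\rangle$ is controlled after summation by $\mathcal{E}_T(f)\,\mathcal{D}_T(f)$, so that for small data it is absorbed by the left-hand side. I would apply Bony's paraproduct decomposition in $x$ to the bilinear operator $\Gamma$ and exploit its velocity-variable structure on hard potentials through the classical Grad-type bound $|\langle\Gamma(g,h),k\rangle_\xi|\lesssim \|g\|_{L^2_\xi}\|h\|_\nu\|k\|_\nu$, which places a $\nu$-weight on the factor that will be absorbed by the microscopic dissipation. The spatial part then reduces to Besov product bounds of the form $\|uv\|_{B^{3/2}_{2,1}}\lesssim \|u\|_{B^{3/2}_{2,1}}\|v\|_{B^{3/2}_{2,1}}$, which rest on the critical embedding $B^{3/2}_{2,1}(\R^3)\hookrightarrow L^\infty_x$. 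The Chemin-Lerner ordering $\widetilde{L}^p_T\widetilde{L}^2_\xi(B^s_x)$ is precisely what allows the $\ell^1_q$-sum to commute with the time integration at this step without any loss of regularity, and this is where the analysis is most delicate.

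Putting these estimates together, a continuity/bootstrap argument yields, for $\varepsilon_0$ small enough, the bound \eqref{global.eng} uniformly in $T>0$ and hence global existence. Uniqueness is proved by a parallel argument on the equation satisfied by the difference $f_1-f_2$ of two solutions, closure being ensured by the smallness of $\mathcal{E}_T$. Finally, non-negativity follows from running a Kaniel-Shinbrot-type positivity-preserving iteration directly on $F$; the uniform a priori bounds carry over to it, and the uniqueness just established identifies the non-negative limit with $\mu+\mu^{1/2}f$.
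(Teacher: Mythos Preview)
Your proposal is correct and follows essentially the same route as the paper: blockwise $L^2_{x,\xi}$ energy estimates using the coercivity of $L$, macroscopic dissipation via the fluid-type moment system, a trilinear estimate combining Bony's decomposition in $x$ with the velocity structure of $\Gamma$, and then local existence plus a continuity argument. The only cosmetic differences are that the paper constructs the local solution through the explicit gain/loss iteration $\{\partial_t+\xi\cdot\nabla_x+\nu\}f^{n+1}-Kf^n=\Gamma_{gain}(f^n,f^n)-\Gamma_{loss}(f^n,f^{n+1})$ rather than a Friedrichs regularization, and in the trilinear step it applies Cauchy--Schwarz directly in $(t,x,\xi,\xi_\ast,\omega)$ before Bony rather than invoking a packaged Grad bound, which yields the slightly sharper form $\sqrt{\mathcal{E}_T(f)}\,\mathcal{D}_T(f)$; neither difference affects the argument.
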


We now give a few comments on Theorem  \ref{main.th.}. Those function spaces appearing in the key inequality \eqref{global.eng} are called the Chemin-Lerner type spaces. When the velocity variable is not taken into account, the usual
Chemin-Lerner space was first introduced in  \cite{CL} to study the existence of solutions to the incompressible Navier-Stokes equations in $\R^3$. To the best of our knowledge, Theorem \ref{main.th.} is the first result for the application of such space to the well-posedness theory of the Cauchy problem on the Boltzmann equation. Moreover, noticing for $s=3/2$ that the Besov space $B_x^{s}=B^{s}_{2,1}\hookrightarrow L^{\infty}_x$ but the Sobolev space  $H^{s}_x$ is  not embedded into $L^\infty_x$, the regularity with respect to $x$ variable   that we consider here is critical. In most of previous work \cite{AMUXY-ARMA,D-08,GR,Guo-BE-s,Liu-Yang-Yu,Sh}, the Sobolev space $H^s_x$ $(s>3/2)$ obeying the Banach algebra property is typically used  for global well-posedness  of strong or classical solutionss. Therefore, Theorem   \ref{main.th.} presents a global result not only in a larger class of function spaces but also in a space with spatially critical regularity in the sense that we mentioned.

For readers we would also point out another two kinds of applications of the Besov space to the Boltzmann equation. In fact, in Ars\'enio-Masmoudi \cite{AM}, a new approach to velocity averaging lemmas in some Besov spaces  is developed basing on the dispersive property of the kinetic transport equation, and in Sohinger-Strain \cite{SS}, the optimal time decay rates in the whole space are investigated in the framework of \cite{GR} under the additional assumption that initial data belongs to a negative power Besov space $B^{s}_{2,\infty}$ for some $s<0$ with respect to $x$ variable.

In what follows let us a little detailedly recall some related works as far as  the choice of different function spaces for well-posedness of the Boltzmann equation near Maxwellians is concerned.
The first global
existence theorem for the mild solution is given by Ukai \cite{U74,U-s} in the space
\begin{equation}\notag
L^\infty \Big(0,\infty;L^\infty_{\be}(\R^3_\xi;H^N(\R^3_x))\Big),\ \ \be>\frac{5}{2},\ N\geq
2,
\end{equation}
by using the spectrum method as well as the contraction mapping principle, see also Nishida-Imai \cite{NI} and Kawashima \cite{Ka-BE13}. Here $L_{\be}^\infty(\R^3_\xi)$ denotes a space of all functions $f$ with  $ (1+|\xi|)^{\be} f$ uniformly bounded. Using a similar approach, Shizuta \cite{Sh} obtains the global existence of the classical solution $f(t,x,\xi)\in C^{1,1,0}((0,\infty)\times \T^3_x\times \R^3_\xi )$  on torus, with the uniform bound in the space
\begin{equation}\notag
    L^\infty \Big(0,\infty; L^\infty_{\be} (\R^3_\xi; C^s(\T^3_x))\Big),\ \ {\be}>\frac{5}{2}, \
    s>\frac{3}{2}.
\end{equation}
The spectrum method was later improved in Ukai-Yang \cite{UY-AA} for the existence of the mild solution in the space
\begin{equation}\notag
   L^\infty \Big(0,\infty;    L^2(\R^3_x\times\R^3_\xi)\cap
    L^\infty_{\be}(\R^3_\xi;L^\infty(\R^3_x))\Big),\ \
    \be>\frac{3}{2},
\end{equation}
without any regularity conditions, where some $L^\infty$-$L^2$  estimates in terms of the Duhamel's principle are developed.

On the
other hand, by means of the robust energy method, for instance, Guo \cite{Guo-IUMJ}, Liu-Yang-Yu \cite{
Liu-Yang-Yu} and Liu-Yu \cite{LY-S}, the well-posedness of classical solutions is also established in
the space
\begin{equation}\notag
 C\Big(0,\infty; H^{N}_{t,x,\xi}(\R^3_x\times\R^3_\xi)\Big), \ \ N\geq 4,
\end{equation}
where the Sobolev space  $H^{N}_{t,x,\xi}(\R^3_x\times\R^3_\xi)$ denotes a set of all functions whose derivatives with respect to all
variables $t$, $x$ and $\xi$ up to $N$ order are integrable in
$L^2(\R^3_x\times\R^3_\xi)$. It turns out that if only the strong solution with the uniqueness property is considered  then the time differentiation can be disregarded in the above  Sobolev space. Indeed the first author of the paper  studied in \cite{D-08} the existence of such strong solution in the space
$$
 C\Big(0,\infty; L^2(\R^3_\xi;H^N(\R^3_x))\Big),\ \ N\geq 4,
$$
where $N\geq 4$ actually can be straightforwardly extent to $N\geq 2$, cf.~\cite{GR}.  We here remark that the techniques used in \cite{D-08} lead to an extensive application of the Fourier energy method to the linearized Boltzmann equation as well as the related collision kinetic equations in plasma physics (cf.~\cite{D-11,DS-VPB}), in order to provide the time-decay properties of the linearized  solution operator instead of using the spectrum approach, and further give the optimal time-decay rates of solutions for the nonlinear problem.

The previously mentioned works are mainly focused on the angular cutoff Boltzmann equation. Recently, AMUXY \cite{AMUXY-ARMA} and  Gressman-Strain \cite{GR} independently prove the global existence of small-amplitude solutions in the angular non-cutoff case for general hard and soft potentials. Particularly, the function space for the energy that  \cite{GR} used in the hard potential case can take the form of
$$
 L^\infty\Big(0,\infty; L^2(\R^3_\xi;H^N(\R^3_x))\Big),\ \ N\geq 2.
$$
Notice that the energy dissipation rate in the non-cutoff case becomes more complicated compared to the cutoff situation, see \eqref{coer}, and the key issue in  \cite{AMUXY-ARMA} and \cite{GR} is to provide a good characterization of the Dirichlet form of the linearized Boltzmann operator so as to control  the nonlinear dynamics.
Very recently,  AMUXY \cite{AMUXY-13} presents a result for local existence in a function space significantly larger than those used in the existing works, where in non cutoff case the index of Sobolev spaces for the solution is related to the parameter of the angular singularity, and in cutoff case the solution space may take
$$
 L^\infty\Big(0,T_0; L^2(\R^3_\xi;H^s(\R^3_x))\Big),\ \ s>\frac{3}{2},
$$
where $T_0>0$ is a finite time.

As mentioned before, whenever $s=3/2$, since $H^{s}_x$ is not a Banach algebra, it seems impossible to expect to obtain a global result in  $L^\infty (0,\infty; X)$ with $X=L^2_\xi H^s_x$. A natural idea is to replace $H^s_x$ by $B^s_x=B^s_{2,1}$ which is a Banach algebra and of the critical regularity. In fact, instead of directly using $L^\infty (0,\infty; B_x^sL^2_\xi)$ we will consider the Chemin-Lerner type function space $\widetilde{L}_T^{\infty}\widetilde{L}_\xi^{2}(B^s_x)$ which has stronger topology than $L^\infty (0,\infty; B_x^sL^2_\xi)$. Here, for $T\geq 0$, $f(t,x,\xi)\in \widetilde{L}_T^{\infty}\widetilde{L}_\xi^{2}(B^s_x)$ means that the norm
$$
\sum_{q\geq -1} 2^{qs} \sup_{0\leq t\leq T} \|\De_q f(t,\cdot,\cdot)\|_{L^2_{x,\xi}}
$$
is finite. The reason why the supremum with respect to time is put after the summation is that one has to use such stronger norm to control the nonlinear term, for instance, see the estimate on $I_1$ given by \eqref{I1}.

In what follows we explain the technical part in the proof of Theorem  \ref{main.th.}.
First, compared to the case of the fluid dynamic equations mentioned before, the corresponding estimates in the space $\widetilde{L}_T^{\infty}\widetilde{L}_\xi^{2}(B^s_x)$ for the Boltzmann equation is much more complicated, not only because of
the additional velocity variable $\xi$ but also because of the nonlinear integral operator $\Gamma(f,f)$. In fact, applying the energy estimates in $L^2_{x,\xi}$ to \eqref{f.eq.} for each $\De_qf$ and due to the choice of the solution space $\widetilde{L}_T^{\infty}\widetilde{L}_\xi^{2}(B^s_x)$, one has to take the time integral, take the square root on both sides of the resulting estimate and then take the summation over all $q\geq -1$, so that we are formally forced to make the trilinear estimate in the form of
$$
\sum\limits_{q\geq-1}2^{qs}\left[\int_0^T\left|(\Delta_q\Gamma(f,g),\Delta_q h)\right|dt\right]^{1/2}.
$$
We need to control the above trilinear term in terms of the norms $\CE_T(\cdot)$ and $\CD_T(\cdot)$ which correspond to the linearized dynamics in the same space $\widetilde{L}_T^{\infty}\widetilde{L}_\xi^{2}(B^s_x)$  and play the usual role of the energy and the energy dissipation respectively.  Thus, Lemma \ref{basic.non.es.} becomes the key step in the proof of the global existence. Due to the nonlinearity of $\Ga(f,g)$, we need to use the Bony decomposition, for instance, for the loss term,
$$
f_{\ast}g=\mathcal {T}_{f_{\ast}}g+\mathcal {T}_{g}f_{\ast}+\mathcal {R}(f_{\ast},g),
$$
where notions $\mathcal {T}$ and $\mathcal {R}$ are to be given later on. For each term on the right of the above decomposition, we will use the boundedness  property in $L^p$ $(1\leq p\leq \infty)$ for the operators $\De_q$ and $S_q$, see  \eqref{bdop} in the Appendix, and further make use of the techniques in \cite{CL} developed by Chemin and Lerner to close the estimate on the above trilinear term.

Second, it seems very standard (cf.~\cite{D-08,DS-VPB}) to estimate the macroscopic component $(a,b,c)$ in the space $\widetilde{L}_T^2(B_x^{1/2})$ on the basis of the fluid-type system \eqref{mac.law}. The only new difficulty that we have to overcome lies on the estimates on
$$
\sum_{q\geq -1} 2^{qs}\left(\left\|\Lambda_i\left(\Delta_q\mathbbm{h}\right)\right\|_{L_T^2L^2_x}+\left\|\Theta_{im}\left(\Delta_q\mathbbm{h}\right)\right\|_{L_T^2L^2_x}\right),\quad s=1/2,
$$
where $\Lambda_i$ and $\Theta_{im}$ with $1\leq i,m\leq 3$ are velocity moment functions, defined later on, and $\mathbbm{h}=-L\{\FI-\FP\}f+\Ga(f,f)$. As in \cite{Guo-BE-s,Guo-L}, this actually can be done in the general situation given by Lemma \ref{non.EDL2.}. We remark that the proof of Lemma  \ref{non.EDL2.} is based on the key Lemma \ref{basic.non.es.} related to the trilinear estimate. The global a priori estimate \eqref{basic.eng.} then can be obtained by combining those trilinear estimates and the estimate on  the macroscopic dissipations.

Third, since the local existence in the space $\widetilde{L}_T^{\infty}\widetilde{L}_\xi^{2}(B^s_x)$  is not obvious, we also provide the complete proof of that by using the idea of \cite{SG}, for instance. The main goal in this part is to obtain the uniform bound of an approximate solution sequence in the norm $\widetilde{Y}_T(\cdot)$ given in \eqref{fn.bdd}, and also show the uniqueness and continuity with respect to $T$ of solutions satisfying such uniform bound. Here, once again the result of Lemma \ref{basic.non.es.} is needed. We also point out that the proof of continuity for $T\mapsto \widetilde{Y}_T(f)$ is essentially reduced to prove that
$$
t\mapsto \sum_{q\geq -1} 2^{3q/2} \|\De_q f(t,\cdot,\cdot )\|_{L^2_{x,\xi}},
$$
is continuous, with the help of the boundedness of the norm $\widetilde{\CD}_T(f)$, see Theorem \ref{local.existence} for more details.



The rest of the paper is arranged as follows.
In Section 2, we explain some notations and present definitions of some function spaces. In Sections 3 and 4, we deduce the key estimates for the collision operators $\Gamma$
and $L$. The estimate for the macroscopic dissipation is given in Section 5. Section 6 is devoted to obtaining the global a priori estimates for the Boltzmann equation in the Chemin-Lerner type space. In Section 7, we construct the local solutions of the Bolztmann equation and further show qualitative properties of the constructed local solutions. In Section 8 we give the proof of Theorem \ref{main.th.}. Finally, an appendix is given for some preliminary lemmas which will be used in the previous sections.

\section{Notations and function spaces}

Throughout the paper,  $C$ denotes some generic positive (generally large) constant and $\lambda$ denotes some generic positive (generally small) constant, where both $C$ and $\lambda$ may take different values in different places. For two quantities $A$ and $B$, ${A}\lesssim {B}$ means that  there is a generic constant $C>0$
such that $ {A}\leqslant C {B}$, and  $ {A}\sim  {B}$ means ${A}\lesssim {B}$ and $ {B}\lesssim {A}$.  For simplicity, $(\cdot,\cdot)$ stands for
the inner product in either $L^2_x=L^2(\R^3_x)$, $L^2_\xi=L^2(\R^3_\xi)$ or $L^2_{x,\xi}=L^2(\R^3_x\times \R^3_\xi)$. We use $\mathcal {S}(\R^3)$ to denote the Schwartz function space on $\R^3$,
and use $\mathcal {S}'(\R^3)$ to denote the dual space of $\mathcal {S}(\R^3)$, that is the tempered function space.
%

Since the crucial nonlinear estimates require a
dyadic decomposition of the Fourier variable, in what follows we recall briefly the
Littlewood-Paley decomposition theory and some function spaces, such as
the Besov space and the Chemin-Lerner space. Readers may refer to \cite{BCD} for more details.
Let us start with the Fourier transform. Here and below, the Fourier transform is taken with respect to the variable $x$ only,
not variables $\xi$ and $t$. Given $(t,\xi)$, the Fourier transform $\hat{f}(t,k,\xi)=\CF_x f(t,k,\xi)$ of a Schwartz function $f(t,x,\xi)\in \CS(\R^3_x)$ is given by
$$
\hat{f}(t,k,\xi):=\int_{\R^3}dx\, e^{-i x\cdot k}f(t,x,\xi),
$$
and  the Fourier transform of a tempered function $f(t,x,\xi)\in \CS'(\R^3_x)$ is defined by the dual argument in the standard way.
%

We now introduce a dyadic partition of $\mathbb{R}^{3}_x$. Let $(\varphi, \chi)$ be a couple of smooth functions valued in the closed interval $[0,
1]$ such that $\varphi$ is supported in the shell
$\mathbb{C}(0,\frac{3}{4},\frac{8}{3})
=\{k\in\mathbb{R}^{3}:\frac{3}{4}\leq|k|\leq\frac{8}{3}\}$ and
$\chi$ is supported in the ball $\B(0,\frac{4}{3})=
\{k\in\mathbb{R}^{3}:|k|\leq\frac{4}{3}\}$, with
\begin{equation*}
\begin{split}
\chi(k)+\sum_{q\geq0}\varphi(2^{-q}k)=1,\ \
\forall\,k\in\mathbb{R}^{3},\\
\sum_{q\in \mathbb{Z}}\varphi(2^{-q}k)=1,\ \
\forall\,k\in\mathbb{R}^{3}\backslash\{0\}.
\end{split}
\end{equation*}
The nonhomogeneous dyadic blocks of  $f=f(x)\in\mathcal{S'}(\R^3_x)$ are
defined as follows:
$$
\Delta_{-1}f:=\chi(D)f=\tilde{\psi}\ast f=\int_{\R^3} \tilde{\psi}(y)f(x-y)\,dy, \ \ \ \mbox{with}\ \
\tilde{\psi}=\mathcal{F}^{-1}\chi;
$$
$$
\Delta_{q}f:=\varphi(2^{-q}D)f=2^{3q}\int_{\R^3} \psi(2^{q}y)f(x-y)\,dy\ \
\ \mbox{with}\ \ \psi=\mathcal{F}^{-1}\varphi,\quad
q\geq0,
$$
where $\ast$  is the convolution operator with respect to the variable $x$ and $\mathcal{F}^{-1}$ denotes the
inverse Fourier transform.
Define the low frequency cut-off operator $S_q$ {($q\geq-1$)} by
$$S_{q}f:=\sum_{j\leq q-1}\Delta_{j}f.$$
It is a convention that $S_{0}f=\Delta_{-1}f$ for $q=0$, and $S_{-1}f=0$ in the case of $q=-1$.
Moreover, the homogeneous dyadic blocks are defined by
$$
\dot{\Delta}_{q}f:=\varphi(2^{-q}D)f=2^{3q}\int_{\R^3} \psi(2^{q}y)f(x-y)dy,\quad \forall\,q\in\mathbb{Z}.
$$
With these notions, the nonhomogeneous Littlewood-Paley
decomposition of  $f\in
\mathcal{S'}(\R^3_x)$ is given by
$$
f=\sum_{q \geq-1}\Delta_{q}f.
$$
And for $f\in \mathcal {S}'$, one  also has
$$f=\sum\limits_{q \in\mathbb{Z}}\dot{\Delta}_{q}f
$$
modulo a polynomial only. Recall that the above
Littlewood-Paley decomposition is almost orthogonal in $L_x^2$.

Having defined the linear operators $\Delta_q$ for $q\geq -1$ (or $\dot{\Delta}_q$ for $q\in\mathbb{Z})$, we give
the definition of nonhomogeneous (or homogeneous) Besov spaces as follows.

\begin{definition}
Let $1\leq p\leq\infty$ and $s\in \mathbb{R}$. For $1\leq r\leq \infty$, the nonhomogeneous Besov space
$B^{s}_{p,r}$ is defined by
\begin{multline*}
B^{s}_{p,r}:=\{ f\in \mathcal {S}'(\R^3_x):\ f=\sum_{q \geq-1}\Delta_{q}f\ \mbox{in}\
\mathcal{S'},\ \text{with} \\
\|f\|_{B^s_{p, r}}=:
\left(\sum_{q\geq-1}\left(2^{qs}\|\Delta_{q}f\|_{L_x^{p}}\right)^{r}\right)^{\frac{1}{r}}<\infty\},
\end{multline*}
where in the case $r=\infty$ we set
$$
\|f\|_{B^s_{p, \infty}}=
\sup_{q\geq-1}2^{qs}\|\Delta_{q}f\|_{L_x^{p}}.
$$
\end{definition}

Let $\mathcal {P}$ denote the class of all polynomials on $\R^3_x$ and let $\mathcal {S}'
/\mathcal {P}$ denote the tempered
distributions on $\R^3_x$ modulo polynomials. The corresponding definition for the homogeneous Besov space
is given as follows.

\begin{definition}
Let $1\leq p\leq\infty$ and $s\in \mathbb{R}$. For $1\leq r<\infty$, the homogeneous
Besov space is defined by
\begin{multline*}
\dot{B}^{s}_{p,r}:=\{ f\in\mathcal {S}'
/\mathcal {P}:\ f=\sum_{q \in\mathbb{Z}}\Delta_{q}f\ \mbox{in}
\ \mathcal{S'}/\mathcal {P},\ \text{with}\\
\|f\|_{\dot{B}^s_{p, r}}:=
\left(\sum\limits_{q\in\mathbb{Z}}(2^{qs}\|{\dot{\Delta}_{q}}f\|_{L_x^{p}})^{r}\right)^{\frac{1}{r}}<\infty\},
\end{multline*}
where in the case $r=\infty$ we set
$$
\|f\|_{\dot{B}^s_{p, \infty}}=
\sup_{q\in \mathbb{Z}}2^{qs}\|{\dot{\Delta}_{q}}f\|_{L_x^{p}}<\infty.
$$
\end{definition}

To the end, for brevity of presentations, we denote $B^s_{2,1}$ by $B^s$, and $\dot{B}^s_{2,1}$ by $\dot{B}^s$, respectively, and we also write $B^s$, $\dot{B}^s$ as $B^s_x$, $\dot{B}^s_x$ to emphasize the $x$ variable.

Since the velocity distribution function $f=f(t,x,\xi)$ involves the velocity variable $\xi$ and the time variable $t$, it is natural to define the Banach space valued function space
$$
L^{p_1}_TL^{p_2}_\xi L^{p_3}_x:=L^{p_1}(0,T; L^{p_2}(\R^3_\xi; L^{p_3}(\R^3_x))),
$$
for $0<T\leq \infty$, $1\leq p_1,\, p_2,\, p_3\leq\infty$, with the norm
\begin{equation*}
\begin{split}
\|f\|_{L^{p_1}_TL^{p_2}_\xi L^{p_3}_x}
=&\left(\int_{0}^T\left(
\int_{\R^3}\left(\int_{\R^3}|f(t,x,\xi)|^{p_3}dx\right)^{p_2/p_3}d\xi\right)
^{p_1/p_2}dt \right)^{1/p_1},
\end{split}
\end{equation*}
where we have used the normal convention in the case when $p_1=\infty$, $p_2=\infty$ or $p_3=\infty$.
Moreover, in order to characterize the Boltzmann dissipation rate, we also define the following velocity
weighted norm
\begin{equation*}
\begin{split}
\|f\|_{L_T^{p_1}L_{\xi,\nu}^{p_2}L_x^{p_3}}
=&\left(\int_{0}^T\left(
\int_{\R^3}\nu(\xi)\left(\int_{\R^3}|f(t,x,\xi)|^{p_3}dx\right)^{p_2/p_3}d\xi\right)
^{p_1/p_2}d t\right)^{1/p_1}
\end{split}
\end{equation*}
for $0<T\leq \infty$, $1\leq p_1,\, p_2,\, p_3\leq\infty$, where the normal convention in the case when $p_1=\infty$, $p_2=\infty$ or $p_3=\infty$ has been used.
%


In what follows, we present the definition of the Chemin-Lerner type spaces
$$
\widetilde{L}_T^{\varrho_1}\widetilde{L}_\xi^{\varrho_2}(B_{p,r}^s),\quad \widetilde{L}_T^{\varrho_1}\widetilde{L}_{\xi,\nu}^{\varrho_2}(B_{p,r}^s),
$$
and
$$
\widetilde{L}_T^{\varrho_1}\widetilde{L}_\xi^{\varrho_2}(\dot{B}_{p,r}^s), \quad \widetilde{L}_T^{\varrho_1}\widetilde{L}_{\xi,\nu}^{\varrho_2}(\dot{B}_{p,r}^s),
$$
{which are initiated by the work \cite{CL}}.

\begin{definition}
Let $1\leq \varrho_1, \varrho_2, p, r\leq\infty$ and $s\in \mathbb{R}$. For $0< T\leq \infty$, the space $\widetilde{L}_t^{\varrho_1}\widetilde{L}_\xi^{\varrho_2}\Red{(B_{p,r}^s)}$ is defined by
$$
\widetilde{L}_T^{\varrho_1}\widetilde{L}_\xi^{\varrho_2}(B_{p,r}^s)
=\left\{
f(t,\cdot,\xi)\in \CS':\ \|f\|_{\widetilde{L}_T^{\varrho_1}\widetilde{L}_\xi^{\varrho_2}(B_{p,r}^s)}<\infty\right\},
$$
where
\begin{equation*}
\|f\|_{\widetilde{L}_T^{\varrho_1}\widetilde{L}_\xi^{\varrho_2}(B_{p,r}^s)}
=\left(\sum_{q\geq-1}2^{qsr}\left(\int_{0}^T\left(
\int_{\R^3}\|\Delta_{q}f\|^{\varrho_2}_{L_x^{p}}d\xi\right)
^{\varrho_1/\varrho_2}dt\right)^{r/\varrho_1}\right)^{\frac{1}{r}}
\end{equation*}
that is,
\begin{equation*}
\|f\|_{\widetilde{L}_T^{\varrho_1}\widetilde{L}_\xi^{\varrho_2}(B_{p,r}^s)}
=\left(\sum_{q\geq-1}2^{qsr}\|\Delta_{q}f\|_{{L_T^{\varrho_1}}L_\xi^{\varrho_2}L_x^{p}}^{r}\right)^{1/r},
\end{equation*}
with the usual convention for $\varrho_1, \varrho_2, p, r=\infty$. Similarly, one also denotes
\begin{equation*}
\begin{split}
\|f\|_{\widetilde{L}_T^{\varrho_1}\widetilde{L}_{\xi,\nu}^{\varrho_2}(B_{p,r}^s)}
=\left(\sum_{q\geq-1}2^{qsr}\|\Delta_{q}f\|_{{L_T^{\varrho_1}L_{\xi,\nu}^{\varrho_2}L_x^{p}}}^{r}\right)^{1/r},
\end{split}
\end{equation*}
and
\begin{equation*}
\begin{split}
&\|f\|_{\widetilde{L}_T^{\varrho_1}\widetilde{L}_{\xi}^{\varrho_2}(\dot{B}_{p,r}^s)}
=\left(\sum_{q\in \mathbb{Z}}2^{qsr}\|{\dot{\Delta}_{q}}f\|_{{L_T^{\varrho_1}L_{\xi}^{\varrho_2}L_x^{p}}}^{r}\right)^{1/r},\\
&\|f\|_{\widetilde{L}_T^{\varrho_1}\widetilde{L}_{\xi,\nu}^{\varrho_2}(\dot{B}_{p,r}^s)}
=\left(\sum_{q\in \mathbb{Z}}2^{qsr}\|{\dot{\Delta}_{q}}f\|_{{L_T^{\varrho_1}L_{\xi,\nu}^{\varrho_2}L_x^{p}}}^{r}\right)^{1/r},
\end{split}
\end{equation*}
with the usual convention for $\varrho_1, \varrho_2, p, r=\infty$.
\end{definition}

We conclude this section with a few remarks. First, since the goal of the paper is to establish the well-posedness in the spatially critical Besov space for the Boltzmann equation, we mainly consider the above norms in the case that $p=2$, $r=1$ and $\varrho_2=2$. Thus, the spaces
$\widetilde{L}_T^{\varrho}\widetilde{L}_\xi^{2}(B^s_x)$, $\widetilde{L}_T^{\varrho}\widetilde{L}_{\xi,\nu}^{2}(B^s_x)$, $\widetilde{L}_T^{\varrho}\widetilde{L}_\xi^{2}(\dot{B}^s_x)$ and $\widetilde{L}_T^{\varrho}\widetilde{L}_{\xi,\nu}^{2}(\dot{B}^s_x)$ with $\varrho=0$ or $\infty$ will be frequently used. Next, whenever a function $f=f(t,x,\xi)$ is independent of $t$ or $\xi$, the corresponding norms defined above are modified in the usual way by omitting
the $t-$variable or $\xi-$variable, respectively. Finally, it should be pointed out that {the Chemin-Lerner type norm $\|\cdot\|_{\widetilde{L}_T^{\varrho_1}\widetilde{L}_\xi^{\varrho_2}(B_{p,r}^s)}$ is the refinement of the usual norm $\|\cdot\|_{L_T^{\varrho_1}L_\xi^{\varrho_2}(B^{s}_{p,r})}$ given by}
\begin{equation*}
\begin{split}
\|f\|_{L_T^{\varrho_1}L_\xi^{\varrho_2}(B^{s}_{p,r})}
=&\left(\int_{0}^T\left(
\int_{\R^3}\left(\sum_{q\geq-1}2^{qsr}\|\Delta_{q}f\|_{L_x^{p}}^{r}\right)^{\varrho_2/r}d\xi\right)
^{\varrho_1/\varrho_2}dt\right)^{1/\varrho_1},
\end{split}
\end{equation*}
for $0<T\leq \infty$, $1\leq r, p, \varrho_1, \varrho_2\leq\infty$.

\section{Trilinear estimates}


Recall the Boltzmann equation \eqref{f.eq.}. The linearized collision operator $L$ can be written as
$
L=\nu-K.
$
Here the multiplier $\nu=\nu(\xi)$, called the collision frequency,  is given by
\begin{equation}\notag
\nu(\xi)=\int_{\R^3} d\xi_\ast \int_{\S^2} d \omega \, |\xi-\xi_\ast|^{\gamma}B_0(\theta)\mu(\xi_\ast).
\end{equation}
It holds that $\nu(\xi)\thicksim (1+|\xi|)^\ga$, cf. \cite{CIP-Book,UY-AA}.
And the integral operator $K=K_{2}-K_{1}$ is defined
as
\begin{eqnarray}
[K_{1}f](\xi)&=&\int_{\R^3} d\xi_\ast \int_{\S^2} d \omega \, |\xi-\xi_\ast|^{\gamma}B_0(\theta)\mu^{1/2}(\xi_\ast)\mu^{1/2}(\xi)f(\xi_\ast),\label{def.k1}\\[2mm]
[K_{2}f](\xi)&=&\int_{\R^3} d\xi_\ast \int_{\S^2} d \omega \, |\xi-\xi_\ast|^{\gamma}B_0(\theta)\mu^{1/2}(\xi_\ast)\notag\\
&&\qquad\qquad\qquad\qquad \times\left\{\mu^{1/2}(\xi_\ast')f(\xi')
+\mu^{1/2}(\xi')f(\xi_\ast')\right\}.\label{K.def.}
\end{eqnarray}
$L$ is coercive in the sense that there is $\la_0>0$ such that
\begin{equation}
\label{coer}
\int_{\R^3} f Lf \,d\xi\geq \la_0 \int_{\R^3} \nu(\xi) |\{\FI-\FP\} f|^2.
\end{equation}
Moreover, the nonlinear  collision operator $\Gamma (f, g)$ is written as
\begin{equation}\label{nop.def.}
\begin{split}
\Gamma(f,g)=&\mu^{-1/2}(\xi)Q\left[\mu^{-1/2}f,\mu^{-1/2}g\right]
=\Gamma_{gain}(f,g)-\Gamma_{loss}(f,g)\\[2mm]
=&\int_{\R^3} d\xi_\ast \int_{\S^2} d \omega \, |\xi-\xi_\ast|^{\gamma}B_0(\theta)\mu^{1/2}(\xi_\ast)f(\xi_\ast')g(\xi')\\[2mm]
&-g(\xi)\int_{\R^3} d\xi_\ast \int_{\S^2} d \omega \, |\xi-\xi_\ast|^{\gamma}B_0(\theta)\mu^{1/2}(\xi_\ast)f(\xi_\ast).
\end{split}
\end{equation}

In this section, we intend to give the key estimates for the nonlinear Bolztmann collision operator
$\Gamma(\cdot,\cdot)$ defined by \eqref{nop.def.} in terms of the spatially critical Besov space. It should be pointed that the following lemmas are new and they play a crucial role in the proof of
the global existence of solutions to the Boltzmann equation \eqref{f.eq.}. First we show the trilinear estimate in the following

\begin{lemma}\label{basic.non.es.}
Assume $s>0$, $0<T\leq \infty$. Let $f=f(t,x,\xi)$,  $g=g(t,x,\xi)$, and $h=h(t,x,\xi)$ be three suitably smooth distribution functions such that all the  norms on the right  of the following inequalities are well defined, then it holds that
\begin{multline}
\sum\limits_{q\geq-1}2^{qs}\left[\int_0^T\left|(\Delta_q\Gamma(f,g),\Delta_q h)\right|dt\right]^{1/2}
\lesssim \|h\|^{1/2}_{\widetilde{L}^{2}_T\widetilde{L}^{2}_{\xi,\nu} (B^{s}_x)}\\
\times \Bigg[
\|g\|^{1/2}_{\widetilde{L}^{2}_T\widetilde{L}^{2}_{\xi,\nu} (B^{s}_x)}
\|f\|^{1/2}_{L^{\infty}_T L^{2}_\xi L^\infty_x}+\|f\|^{1/2}_{L^{2}_T L^{2}_{\xi,\nu} L^\infty_x}
\|g\|^{1/2}_{\widetilde{L}^{\infty}_T\widetilde{L}^{2}_\xi (B^{s}_x)}
\\[3mm]
+\|f\|^{1/2}_{\widetilde{L}^{2}_T\widetilde{L}^{2}_{\xi,\nu} (B^{s}_x)}
\|g\|^{1/2}_{L^{\infty}_T L^{2}_\xi L^\infty_x}+\|g\|^{1/2}_{L^{2}_T L^{2}_{\xi,\nu} L^\infty_x}
\|f\|^{1/2}_{\widetilde{L}^{\infty}_T\widetilde{L}^{2}_\xi (B^{s}_x)}\Bigg],
\label{basic.non.p1.}
\end{multline}
%
where the inner product $(\cdot,\cdot)$ is taken with respect to variables $(x,\xi)$.
\end{lemma}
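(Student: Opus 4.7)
The plan is to split $\Gamma = \Gamma_{gain} - \Gamma_{loss}$ and apply Bony's paraproduct decomposition in the spatial variable $x$ to each of the products $f(\xi_\ast) g(\xi)$ (loss) and $f(\xi_\ast') g(\xi')$ (gain). This turns each bilinear term into the sum of two paraproducts plus a remainder, $\mathcal{T}_{f_\ast} g + \mathcal{T}_g f_\ast + \mathcal{R}(f_\ast, g)$ for the loss, and the analog for the gain after the pre/post-collisional change of variables. The four terms on the right-hand side of \eqref{basic.non.p1.} will then correspond to the four ways of distributing two dichotomies (Besov $x$-slot vs.\ $L^\infty_x$-slot, and $L^\infty_T$-slot vs.\ $L^2_T$-slot) across $f$ and $g$.

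My first step is to obtain a pointwise-in-$t$ bound of the form $|(\Delta_q \Gamma(f,g), \Delta_q h)| \lesssim \|\Delta_q h\|_{L^2_{\xi,\nu} L^2_x}(t) \cdot Y_q(f,g)(t)$, where $Y_q$ is a sum of products of Littlewood-Paley blocks of $f$ and $g$. The building blocks are the standard kernel bound $\int d\xi_\ast \int d\omega\,|\xi - \xi_\ast|^\gamma B_0(\theta) \mu^{1/2}(\xi_\ast) \lesssim \nu(\xi)$ together with a Cauchy-Schwarz in $\xi$ against the weights $\nu^{\pm 1/2}$, so that the full $\nu$-weight lands on $h$. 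For $\Delta_q \mathcal{T}_{f_\ast} g$ only $q' \sim q$ contribute, so I plan to place $L^\infty_x$ on the low-frequency factor $S_{q'-1} f_\ast$ (bounded by $\|f\|_{L^\infty_x L^2_\xi}$ by the $L^\infty$-boundedness of $S_{q'-1}$) and $L^2_x$ on $\Delta_{q'} g$ and $\Delta_q h$. The symmetric paraproduct $\mathcal{T}_g f_\ast$ yields the same bound with $f, g$ swapped. For $\mathcal{R}(f_\ast, g) = \sum_{q' \geq q - N_0} \Delta_{q'} f_\ast \widetilde{\Delta}_{q'} g$, I will again place $L^\infty_x$ on one dyadic block and $L^2_x$ on the other; the reweighting $2^{qs} \lesssim 2^{q's}\cdot 2^{(q-q')s}$ is then absorbed by the geometric tail $\sum_{j \geq -N_0} 2^{-js} < \infty$, which is finite precisely when $s > 0$, so $\mathcal{R}$ produces terms of the same structure as the two paraproducts.

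Once the pointwise bound is in hand, I will integrate in $t$ by Cauchy-Schwarz to get $\int_0^T \cdots \lesssim \|\Delta_q h\|_{L^2_T L^2_{\xi,\nu} L^2_x} \cdot \|Y_q\|_{L^2_T}$, take the square root, multiply by $2^{qs}$, and sum over $q \geq -1$. The decisive algebraic step is the double Cauchy-Schwarz
\[
\sum_{q \geq -1} 2^{qs} a_q^{1/2} b_q^{1/2} = \sum_{q \geq -1} \bigl(2^{qs/2} a_q^{1/2}\bigr)\bigl(2^{qs/2} b_q^{1/2}\bigr) \leq \Bigl(\sum_q 2^{qs} a_q\Bigr)^{1/2} \Bigl(\sum_q 2^{qs} b_q\Bigr)^{1/2},
\]
which factors out $\|h\|^{1/2}_{\widetilde{L}^2_T \widetilde{L}^2_{\xi,\nu}(B^s_x)}$ and leaves a half-power sum in $f, g$. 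A second Hölder in $t$, placing the $L^2_T$ on the dissipation factor of the pair and $L^\infty_T$ on the other, then reproduces each of the four pairings listed in \eqref{basic.non.p1.}: $\widetilde{L}^2_T \widetilde{L}^2_{\xi,\nu}(B^s_x)$ paired with $L^\infty_T L^2_\xi L^\infty_x$, or $\widetilde{L}^\infty_T \widetilde{L}^2_\xi(B^s_x)$ paired with $L^2_T L^2_{\xi,\nu} L^\infty_x$, depending on which of $f, g$ carries the Besov/dissipation slot.

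The hard part will be the remainder $\mathcal{R}$: unlike the paraproducts, which are Fourier-localized in a shell around $2^q$, the remainder is high-high-to-low, and its treatment is the one place where the hypothesis $s > 0$ is strictly necessary. A secondary complication is the gain term, where the Bony decomposition acts on the post-collisional product $f(\xi_\ast') g(\xi')$ and the Cauchy-Schwarz in velocity must be organized after the standard symmetrization of the collision kernel; this is routine but has to be executed compatibly with the Littlewood-Paley blocks, which only touch $x$.
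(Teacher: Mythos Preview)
Your plan is essentially the paper's proof: split $\Gamma_{gain}-\Gamma_{loss}$, apply Cauchy--Schwarz in $(x,\xi,\xi_\ast,\omega)$ to peel off $\|\Delta_q h\|_{L^2_{\xi,\nu}L^2_x}$, use the Bony decomposition on $\Delta_q[f_\ast g]$, handle the two paraproducts via $|j-q|\leq 4$ and the remainder via the geometric tail (this is exactly where $s>0$ enters), and close with the discrete Cauchy--Schwarz $\sum_q 2^{qs}a_q^{1/2}b_q^{1/2}\leq(\sum_q 2^{qs}a_q)^{1/2}(\sum_q 2^{qs}b_q)^{1/2}$ together with H\"older in $t$.

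One point you leave implicit deserves a sentence: your phrase ``the full $\nu$-weight lands on $h$'' is not quite right. After Cauchy--Schwarz, the $h$-factor indeed carries $\int|\xi-\xi_\ast|^\gamma\mu^{1/2}(\xi_\ast)\,d\xi_\ast\sim\nu(\xi)$, but the $(f,g)$-factor still carries the kernel $|\xi-\xi_\ast|^\gamma$ (the $\mu^{1/2}$ is simply dropped). The paper then uses the elementary split $|\xi-\xi_\ast|^\gamma\lesssim|\xi|^\gamma+|\xi_\ast|^\gamma$ to route the weight onto either $g(\xi)$ or $f(\xi_\ast)$; this is precisely what generates the two sub-cases (e.g.\ $I_{1,1}$ vs.\ $I_{1,2}$) inside each paraproduct and is why each paraproduct contributes \emph{two} of the four pairings in \eqref{basic.non.p1.}, not one. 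Your four-term list is correct, but you should make this splitting explicit, since without it there is no mechanism to put a $\nu$-weight on the $L^2_T$ factor of $f$ or $g$.
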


\begin{proof}
Recalling \eqref{nop.def.} and using the inequality $(A+B)^{1/2}\leq A^{1/2}+B^{1/2}$ for $A\geq 0$ and $B\geq 0$, one has
\begin{eqnarray}
&\dis \left[\int_0^T\left|(\Delta_q\Gamma(f,g),\Delta_q h)\right|dt\right]^{\frac{1}{2}}\notag\\
&\dis \leq \left[\int_0^T\left|(\Delta_q\Gamma_{gain}(f,g),\Delta_q h)\right|dt\right]^{\frac{1}{2}}+\left[\int_0^T\left|(\Delta_q\Gamma_{loss}(f,g),\Delta_q h)\right|dt\right]^{\frac{1}{2}}.
\label{Da.p01}
\end{eqnarray}
Here notice that since the collision integral acts on $\xi$ variable only and $\De_q$ acts on $x$ variable only, one can write
\begin{eqnarray*}
\Delta_q\Gamma_{gain}(f,g) &=&\int_{\R^3} d\xi_\ast \int_{\S^2} d \omega \, |\xi-\xi_\ast|^{\gamma}B_0(\theta)\mu^{1/2}(\xi_\ast)\De_q [f(\xi_\ast')g(\xi')],\\
\De_q\Gamma_{loss}(f,g) &=&\int_{\R^3} d\xi_\ast \int_{\S^2} d \omega \, |\xi-\xi_\ast|^{\gamma}B_0(\theta)\mu^{1/2}(\xi_\ast)\De_q [f(\xi_\ast)g(\xi)].
\end{eqnarray*}
By applying Cauchy-Schwarz inequality to both integrals on the right of \eqref{Da.p01} with respect to all variable $(t,x,\xi,\xi_\ast,\om)$,  making the change of variables $(\xi,\xi_\ast)\to(\xi',\xi'_\ast)$ in the gain term, and then taking the summation over $q\geq -1$ after multiplying it by $2^{qs}$,  we see that
\begin{equation}\notag
\begin{split}
&\sum\limits_{q\geq-1}2^{qs}\left[\int_0^T\left|(\Delta_q\Gamma(f,g),\Delta_qh)\right|dt\right]^{1/2}\\
&\lesssim \sum\limits_{q\geq-1}2^{qs}\left[\left(\int_0^Tdt\int_{\R^9\times\S^2}dxd\xi d\xi_{\ast}d\omega\,
|\xi'-\xi_{\ast}'|^\ga\mu^{1/2}(\xi'_\ast)\left|\Delta_q[f_{\ast}g]\right|^2\right)^{1/2}\right]^{1/2}
\\&\qquad \times\left[\left(\int_0^Tdt\int_{\R^9}dxd\xi d\xi_{\ast}d\omega\,
|\xi-\xi_{\ast}|^\ga\mu^{1/2}(\xi_\ast)\left|\Delta_q h\right|^2\right)^{1/2}\right]^{1/2}\\
&\quad +\sum\limits_{q\geq-1}2^{qs}\left[\left(\int_0^Tdt\int_{\R^9\times \S^2}dxd\xi d\xi_{\ast}d\omega\,
|\xi-\xi_{\ast}|^\ga\mu^{1/2}(\xi_\ast)\left|\Delta_q[f_{\ast}g]\right|^2\right)^{1/2}\right]^{1/2}
\\&\qquad \times\left[\left(\int_0^Tdt\int_{\R^9}dxd\xi d\xi_{\ast}d\omega\,
|\xi-\xi_{\ast}|^\ga\mu^{1/2}(\xi_\ast)\left|\Delta_q h\right|^2\right)^{1/2}\right]^{1/2}\\[3mm]
&:=I_{0},
\end{split}
\end{equation}
where $0\leq B_0(\theta)\leq C |\cos\theta| \leq C$ have been used.
Further by using the discrete version of Cauchy-Schwarz inequality to two summations $\sum_{q\geq -1}$ above, one obtains that
\begin{equation}\notag
\begin{split}
I_0\lesssim& \left[\sum\limits_{q\geq-1}2^{qs}\left(\int_0^Tdt\int_{\R^9}dxd\xi d\xi_{\ast}\,
|\xi-\xi_{\ast}|^\ga\left|\Delta_q[f_{\ast}g]\right|^2\right)^{1/2}\right]^{1/2}
\\&\times\left[\sum\limits_{q\geq-1}2^{qs}\left(\int_0^Tdt\int_{\R^9}dxd\xi d\xi_{\ast}\,
|\xi-\xi_{\ast}|^\ga\mu^{1/2}(\xi_\ast)\left|\Delta_q h\right|^2\right)^{1/2}\right]^{1/2}\\[3mm]
&:=I^{1/2}\times II^{1/2},
\end{split}
\end{equation}
where $|\xi'-\xi_\ast'|=|\xi-\xi_\ast|$, {$\mu^{1/2}(\xi'_\ast)\leq1$} and $\int_{\S^2}d\om=4\pi$ have been used.
It is straightforward to see
$$
II\leq \|h\|_{\widetilde{L}^{2}_T\widetilde{L}^{2}_{\xi,\nu} (B^{s}_x)}
$$
due to
\begin{equation}
\notag
\int_{\R^3}d\xi_\ast\, |\xi-\xi_{\ast}|^\ga\mu^{1/2}(\xi_\ast) \sim  (1+|\xi|)^\ga \sim \nu(\xi).
\end{equation}
We now turn to compute $I$.
Recalling the Bony's decomposition, one can write $\Delta_q[f_{\ast}g]$ as
$$
\Delta_q[f_{\ast}g]=\Delta_q\left[\mathcal {T}_{f_{\ast}}g+\mathcal {T}_{g}f_{\ast}+\mathcal {R}(f_{\ast},g)\right].
$$
Here
$\mathcal {T}_{\cdot}\cdot$, and $\mathcal {R}(\cdot,\cdot)$ are the usual paraproduct operators. They are defined as follows. For suitable smooth distribution functions $u$ and $v$,
$$
\mathcal {T}_{u}v=\sum\limits_{j}S_{j-1}u\Delta_jv,\ \ \mathcal {R}(u,v)=\sum\limits_{|j'-j|\leq1}\Delta_{j'}u\Delta_jv.
$$
We therefore get from Minkowski's inequality that
\begin{equation}\notag
\begin{split}
I\leq& \sum\limits_{q\geq-1}2^{qs}\left(\int_0^Tdt\int_{\R^9}dxd\xi d\xi_{\ast}
|\xi-\xi_{\ast}|^\ga\left|\sum\limits_{j}\Delta_q[S_{j-1}f_\ast\Delta_j g]\right|^2\right)^{1/2}\\
&+\sum\limits_{q\geq-1}2^{qs}\left(\int_0^Tdt\int_{\R^9}dxd\xi d\xi_{\ast}
|\xi-\xi_{\ast}|^\ga\left|\sum\limits_{j}\Delta_q[S_{j-1}g\Delta_j f_\ast]\right|^2\right)^{1/2}\\
&+\sum\limits_{q\geq-1}2^{qs}\left(\int_0^Tdt\int_{\R^9}dxd\xi d\xi_{\ast}
|\xi-\xi_{\ast}|^\ga\left|\sum\limits_{|j-j'|\leq1}\Delta_q[\Delta_jf_\ast\Delta_{j'} g]\right|^2\right)^{1/2}\\[3mm]
&:=I_1+I_2+I_3.
\end{split}
\end{equation}
Now we estimate $I_1$, $I_2$ and $I_3$ term by term.

\medskip
\noindent \underline{{\it Estimates on $I_1$:}}
Notice that
$$
\sum\limits_{j}\Delta_q[S_{j-1}f_\ast\Delta_j g]=\sum\limits_{|j-q|\leq4}\Delta_q[S_{j-1}f_\ast\Delta_j g].
$$
By Minkowski's inequality again, one can see that
\begin{equation}\label{I1}
\begin{split}
I_1\leq& \sum\limits_{q\geq-1}\sum\limits_{|j-q|\leq4}2^{qs}\left(\int_0^Tdt\int_{\R^9}dxd\xi d\xi_{\ast}\,
|\xi|^\ga\left|\Delta_q[S_{j-1}f_\ast\Delta_j g]\right|^2\right)^{1/2}\\
&+\sum\limits_{q\geq-1}\sum\limits_{|j-q|\leq4}2^{qs}\left(\int_0^Tdt\int_{\R^9}dxd\xi d\xi_{\ast}\,
|\xi_\ast|^\ga\left|\Delta_q[S_{j-1}f_\ast\Delta_j g]\right|^2\right)^{1/2}\\[3mm]
&:=I_{1,1}+I_{1,2}.
\end{split}
\end{equation}
Applying \eqref{bdop} in the appendix, one can deduce that
\begin{equation*}
\begin{split}
I_{1,1}\leq& \sum\limits_{q\geq-1}\sum\limits_{|j-q|\leq4}2^{qs}\left(\int_0^Tdt\int_{\R^3}
\|f_\ast\|^2_{L^\infty_x}d\xi_{\ast}\int_{\R^3}|\xi|^\ga\left\|\Delta_j g\right\|_{L^2_x}^2d\xi \right)^{1/2}\\
\leq& \sum\limits_{q\geq-1}\sum\limits_{|j-q|\leq4}2^{qs}\left(\sup\limits_{0\leq t\leq T}\int_{\R^3}
\|f_\ast\|^2_{L^\infty_x}d\xi_{\ast}\int_0^Tdt\int_{\R^3}|\xi|^\ga\left\|\Delta_j g\right\|_{L^2_x}^2d\xi \right)^{1/2}
\\
\leq& \sum\limits_{q\geq-1}\sum\limits_{|j-q|\leq4}{2^{qs}}\left(\int_0^Tdt\int_{\R^3}|\xi|^\ga\left\|\Delta_j g\right\|_{L^2_x}^2d\xi \right)^{1/2}
\|f\|_{L^{\infty}_T L^{2}_\xi L^\infty_x}
\\
\leq& \sum\limits_{q\geq-1}\sum\limits_{|j-q|\leq4}{2^{(q-j)s}}c_1(j)\|g\|_{\widetilde{L}^{2}_T\widetilde{L}^{2}_{\xi,\nu} (B^{s}_x)}
\|f\|_{L^{\infty}_T L^{2}_\xi L^\infty_x},
\end{split}
\end{equation*}
where $c_1(j)$ is defined as
\begin{equation}
\label{def.c1j}
c_1(j)=\frac{2^{js}\left(\dis{\int_0^T}dt\dis{\int_{\R^3}}|\xi|^\ga\left\|\Delta_j g\right\|_{L^2_x}^2d\xi \right)^{1/2}}{{\|g\|_{\widetilde{L}^{2}_T\widetilde{L}^{2}_{\xi,\nu} (B^{s}_x)}}},
\end{equation}
which satisfies
$
{\|c_1(j)\|_{\ell^1}\leq1.}
$
From the above estimate on $I_{1,1}$, using the following convolution inequality for series
\begin{multline}
\label{con.ine}
\sum\limits_{q\geq-1}\sum\limits_{|j-q|\leq4}{2^{(q-j)s}}c_1(j)=\sum\limits_{q\geq-1}\left[\left({\bf1}_{|j|\leq4}2^{js}\right)\ast c_1(j)\right](q)
\\
\leq\|{\bf1}_{|j|\leq4}2^{js}\|_{\ell^1}\|c_1(j)\|_{\ell^1}<+\infty,
\end{multline}
we further get that
\begin{equation}\label{I11}
\begin{split}
I_{1,1}\lesssim\|g\|_{\widetilde{L}^{2}_T\widetilde{L}^{2}_{\xi,\nu} (B^{s}_x)}
\|f\|_{L^{\infty}_T L^{2}_\xi L^\infty_x},
\end{split}
\end{equation}

The estimates for $I_{1,2}$ is slightly different from $I_{1,1}$.  In fact, we may compute it as
\begin{equation*}
\begin{split}
I_{1,2}\leq& \sum\limits_{q\geq-1}\sum\limits_{|j-q|\leq4}2^{qs}\left(\int_0^Tdt\int_{\R^3}|\xi_\ast|^\ga
\|f_\ast\|^2_{L^\infty_x}d\xi_{\ast}\int_{\R^3}\left\|\Delta_j g\right\|_{L^2_x}^2d\xi \right)^{1/2}\\
\leq& \sum\limits_{q\geq-1}\sum\limits_{|j-q|\leq4}2^{qs}\left(\sup\limits_{0\leq t\leq T}\int_{\R^3}\left\|\Delta_j g\right\|_{L^2_x}^2d\xi\int_0^Tdt\int_{\R^3}|\xi_\ast|^\ga
\|f_\ast\|^2_{L^\infty_x}d\xi_{\ast} \right)^{1/2}
\\
\leq& \sum\limits_{q\geq-1}\sum\limits_{|j-q|\leq4}{2^{qs}}\left(\sup\limits_{0\leq t\leq T}\int_{\R^3}\left\|\Delta_j g\right\|_{L^2_x}^2d\xi \right)^{1/2}
\|f\|_{L^{2}_TL^{2}_{\xi,\nu}L^\infty_x}
\\
\leq& \sum\limits_{q\geq-1}\sum\limits_{|j-q|\leq4}{2^{(q-j)s}}c_2(j)
\|g\|_{\widetilde{L}^{\infty}_T\widetilde{L}^{2}_\xi (B^{s}_x)}\|f\|_{L^{2}_TL^{2}_{\xi,\nu}L^\infty_x},
\end{split}
\end{equation*}
where $c_2(j)$ is defined as
\begin{equation*}
c_2(j)=\frac{2^{js}\left(\sup\limits_{0\leq t\leq T}\dis{\int_{\R^3}}\left\|\Delta_j g\right\|_{L^2_x}^2d\xi \right)^{1/2}}{\|g\|_{\widetilde{L}^{\infty}_T\widetilde{L}^{2}_\xi (B^{s}_x)}}.
\end{equation*}
Since $\|c_2(j)\|_{\ell^1}=1$, then in a similar way as for obtaining \eqref{I11}, we have
\begin{equation}\label{I12}
\begin{split}
I_{1,2}\lesssim\|f\|_{L^{2}_T L^{2}_{\xi,\nu}L^\infty_x}
\|g\|_{\widetilde{L}^{\infty}_T \widetilde{L}^{2}_\xi(B^{s})}.
\end{split}
\end{equation}
Now substituting \eqref{I11} and \eqref{I12} into \eqref{I1}, one has
\begin{equation}\notag
I_1\lesssim\|g\|_{\widetilde{L}^{2}_T \widetilde{L}^{2}_{\xi,\nu}(B^{s}_x)}
\|f\|_{L^{\infty}_T L^{2}_\xi L^\infty_x}+\|f\|_{L^{2}_T L^{2}_{\xi,\nu}L^\infty_x}
\|g\|_{\widetilde{L}^{\infty}_T \widetilde{L}^{2}_\xi (B^{s}_x)}.
\end{equation}
This gives the estimate for $I_1$.

\medskip

\noindent \underline{{\it Estimates on $I_2$:}} From (3.14), we have
\begin{equation}\label{I2}
\begin{split}
I_2\leq& \sum\limits_{q\geq-1}\sum\limits_{|j-q|\leq4}2^{qs}\left(\int_0^Tdt\int_{\R^9}dxd\xi d\xi_{\ast}\,
|\xi|^\ga\left|\Delta_q[S_{j-1}g\Delta_j f_\ast]\right|^2\right)^{1/2}\\
&+\sum\limits_{q\geq-1}\sum\limits_{|j-q|\leq4}2^{qs}\left(\int_0^Tdt\int_{\R^9}dxd\xi d\xi_{\ast}\,
|\xi_\ast|^\ga\left|\Delta_q[S_{j-1}g\Delta_j f_\ast]\right|^2\right)^{1/2}\\[3mm]
&:=I_{2,1}+I_{2,2}.
\end{split}
\end{equation}
As before, it follows from  \eqref{bdop} in the appendix that
\begin{equation}\label{I21}
\begin{split}
I_{2,1}\leq& \sum\limits_{q\geq-1}\sum\limits_{|j-q|\leq4}2^{qs}\left(\int_0^Tdt\int_{\R^3}
\|\Delta_{j}f_\ast\|^2_{L^2_x}d\xi_{\ast}\int_{\R^3}|\xi|^\ga\left\|g\right\|_{L^\infty_x}^2d\xi \right)^{1/2}\\
\leq& \sum\limits_{q\geq-1}\sum\limits_{|j-q|\leq4}2^{qs}\left(\sup\limits_{0\leq t\leq T}\int_{\R^3}
\|\Delta_{j}f_\ast\|^2_{L^2_x}d\xi_{\ast}\int_0^Tdt\int_{\R^3}|\xi|^\ga\left\|g\right\|_{L^\infty_x}^2d\xi \right)^{1/2}
\\
\leq& \sum\limits_{q\geq-1}\sum\limits_{|j-q|\leq4}2^{qs}\left(\sup\limits_{0\leq t\leq T}\int_{\R^3}\|\Delta_{j}f_\ast\|^2_{L^2_{x}}d\xi_{\ast}
\right)^{1/2}\|g\|_{L^{2}_TL^{2}_{\xi,\nu}L^\infty_x}
\\
\leq& \sum\limits_{q\geq-1}\sum\limits_{|j-q|\leq4}2^{(q-j)s}c_3(j)\|f\|_{\widetilde{L}^{\infty}_T \widetilde{L}^{2}_{\xi} ((B^{s}_x))}
\|g\|_{L^{2}_T L^{2}_{\xi,\nu} L^\infty_x}
\end{split}
\end{equation}
with
\begin{equation*}
c_3(j)=\frac{2^{js}\left(\sup\limits_{0\leq t\leq T}\dis{\int_{\R^3}\|\Delta_{j}f\|^2_{L^2_{x}}d\xi} \right)^{1/2}}{\|f\|_{\widetilde{L}^{\infty}_T \widetilde{L}^{2}_{\xi} ((B^{s}_x))}}.
\end{equation*}
Similarly, it holds that
\begin{equation}\label{I22}
\begin{split}
I_{2,2}\leq& \sum\limits_{q\geq-1}\sum\limits_{|j-q|\leq4}2^{qs}\left(\int_0^Tdt\int_{\R^3}|\xi_\ast|^\ga
\|\Delta_{j}f_\ast\|^2_{L^2_x}d\xi_{\ast}\int_{\R^3}\left\|g\right\|_{L^\infty_x}^2d\xi \right)^{1/2}\\
\leq& \sum\limits_{q\geq-1}\sum\limits_{|j-q|\leq4}2^{qs}\left(\sup\limits_{0\leq t\leq T}\int_{\R^3}\left\|g\right\|_{L^\infty_x}^2d\xi\int_0^Tdt\int_{\R^3}|\xi_\ast|^\ga
\|\Delta_{j}f_\ast\|^2_{L^2_x}d\xi_{\ast} \right)^{1/2}
\\
\leq& \sum\limits_{q\geq-1}\sum\limits_{|j-q|\leq4}2^{qs}
\left(\int_0^Tdt\int_{\R^3}|\xi_\ast|^\ga
\|\Delta_{j}f_\ast\|^2_{L^2_x}d\xi_{\ast}\right)^{1/2}\|g\|_{L^{\infty}_TL^{2}_{\xi}L^\infty_x}
\\
\leq& \sum\limits_{q\geq-1}\sum\limits_{|j-q|\leq4}2^{(q-j)s}c_4(j)
\|f\|_{\widetilde{L}^{2}_T \widetilde{L}^{2}_{\xi,\nu} ((B^{s}_x))}\|g\|_{L^{\infty}_TL^{2}_{\xi}L^\infty_x}
\end{split}
\end{equation}
with
$$
c_4(j)=\frac{2^{js}\left(\dis{\int_0^Tdt}\dis{\int_{\R^3}|\xi|^\ga
\|\Delta_{j}f\|^2_{L^2_x}d\xi}\right)^{1/2}}{\|f\|_{\widetilde{L}^{2}_T \widetilde{L}^{2}_{\xi,\nu} (B^{s}_x)}}.
$$
Noticing $\|(c_3)\|_{\ell^1}=1$ and ${\|c_4(j)\|_{\ell^1}\leq1}$, it follows from \eqref{I2}, \eqref{I21} and \eqref{I22} that
\begin{equation}\notag
I_{2}\lesssim \|f\|_{\widetilde{L}^{\infty}_T \widetilde{L}^{2}_{\xi} ((B^{s}_x))}
\|g\|_{L^{2}_T L^{2}_{\xi,\nu} L^\infty_x}+\|f\|_{\widetilde{L}^{2}_T \widetilde{L}^{2}_{\xi,\nu} ((B^{s}_x))}\|g\|_{L^{\infty}_TL^{2}_{\xi}L^\infty_x}.
\end{equation}

\medskip
\noindent \underline{{\it Estimates on $I_3$:}}
We start from the fact that
$$
\sum\limits_{j}\sum\limits_{|j-j'|\leq1}\Delta_q[\Delta_{j'}f_\ast\Delta_j g]=\sum\limits_{\max\{j,j'\}\geq q-2}\sum\limits_{|j-j'|\leq1}\Delta_q[\Delta_{j'}f_\ast\Delta_j g].
$$
With this, one can see that
\begin{equation*}
\begin{split}
I_3\leq& \sum\limits_{q\geq-1}\sum\limits_{\max\{j,j'\}\geq q-2}\sum\limits_{|j-j'|\leq1}2^{qs}\left(\int_0^Tdt\int_{\R^9}dxd\xi d\xi_{\ast}\,
|\xi|\left|\Delta_q[\Delta_{j'}f_\ast\Delta_j g]\right|^2\right)^{1/2}\\
&+\sum\limits_{q\geq-1}\sum\limits_{\max\{j,j'\}\geq q-2}\sum\limits_{|j-j'|\leq1}2^{qs}\left(\int_0^Tdt\int_{\R^9}dxd\xi d\xi_{\ast}\,
|\xi_\ast|\left|\Delta_q[\Delta_{j'}f_\ast\Delta_j g]\right|^2\right)^{1/2}\\[3mm]
&:=I_{3,1}+I_{3,2}.
\end{split}
\end{equation*}
As before, applying \eqref{bdop}, we get that
\begin{equation*}
\begin{split}
I_{3,1}\leq& \sum\limits_{q\geq-1}\sum\limits_{j\geq q-3}2^{qs}\left(\int_0^Tdt\int_{\R^3}
\|f_\ast\|^2_{L^\infty_x}d\xi_{\ast}\int_{\R^3}|\xi|\left\|\Delta_j g\right\|_{L^2_x}^2d\xi \right)^{1/2}\\
\leq& \sum\limits_{q\geq-1}\sum\limits_{j\geq q-3}2^{qs}\left(\sup\limits_{0\leq t\leq T}\int_{\R^3}
\|f_\ast\|^2_{L^\infty_x}d\xi_{\ast}\int_0^Tdt\int_{\R^3}|\xi|\left\|\Delta_j g\right\|_{L^2_x}^2d\xi \right)^{1/2}
\\
\leq& \sum\limits_{q\geq-1}\sum\limits_{j\geq q-3}2^{qs-js}2^{js}\left(\int_0^Tdt\int_{\R^3}|\xi|\left\|\Delta_j g\right\|_{L^2_x}^2d\xi \right)^{1/2}
\|f\|_{L^{\infty}_T L^{2}_\xi L^\infty_x}
\\
\leq& \sum\limits_{q\geq-1}\sum\limits_{j\geq q-3}2^{qs-js}c_1(j)\|g\|_{\widetilde{L}^{2}_T \widetilde{L}^{2}_{\xi,\nu} (B^{s}_x)}
\|f\|_{L^{\infty}_T L^{2}_\xi L^\infty_x}\\
\lesssim & \|g\|_{\widetilde{L}^{2}_T \widetilde{L}^{2}_{\xi,\nu}(B^{s}_x)}
\|f\|_{L^{\infty}_T L^{2}_\xi L^\infty_x},
\end{split}
\end{equation*}
where $c_1(j)$ is defined in \eqref{def.c1j},
and {the same type of convolution inequality for series as in \eqref{con.ine}} has been used in the last inequality.
Similarly, one can see that $I_{3,2}$ is also bounded as
\begin{equation*}
\begin{split}
I_{3,2}\lesssim\|f\|_{\widetilde{L}^{2}_T \widetilde{L}^{2}_{\xi,\nu} (B^{s}_x)}
\|g\|_{L^{\infty}_T L^{2}_\xi L^\infty_x}.
\end{split}
\end{equation*}

Combing all above estimates on $I_1,I_2$ and $I_3$, we obtain the inequality \eqref{basic.non.p1.}.
Hence, the proof of Lemma \ref{basic.non.es.} is completed.\end{proof}


Having Lemma \ref{basic.non.es.}, one can see that the following result also holds true.

\begin{lemma}\label{nh-h.non.es.}
Assume $s> 0$, $0\leq T\leq +\infty$, and let $f=f(t,x,\xi), g=g(t,x,\xi)$, and $h=h(t,x,\xi)$ be some suitable smooth distribution functions such that the following norms are well defined, then it holds that 
\begin{multline}
\label{nop.nhb.es.}
\sum\limits_{q\geq-1}2^{qs}\left[\int_0^T\left|(\Delta_q\Gamma(f,g),\Delta_q h)\right|dt\right]^{1/2}\lesssim \|h\|^{1/2}_{\widetilde{L}^{2}_T \widetilde{L}^{2}_{\xi,\nu} (B^{s}_x)}\\
\qquad\times\Bigg[
\dis \|g\|^{1/2}_{\widetilde{L}^{2}_T\widetilde{L}^{2}_{\xi,\nu}(B^{s}_x)}
\|f\|^{1/2}_{\widetilde{L}^{\infty}_T \widetilde{L}^{2}_\xi(\FX)}
+\|f\|^{1/2}_{\widetilde{L}^{2}_T\widetilde{L}^{2}_{\xi,\nu}(\FX)}
\|g\|^{1/2}_{\widetilde{L}^{\infty}_T \widetilde{L}^{2}_\xi (B^{s}_x)}\\[3mm]
+ \|f\|^{1/2}_{\widetilde{L}^{2}_T \widetilde{L}^{2}_{\xi,\nu} (B^{s}_x)}
\|g\|^{1/2}_{\widetilde{L}^{\infty}_T \widetilde{L}^{2}_\xi(\FX)}+\|g\|^{1/2}_{\widetilde{L}^{2}_T \widetilde{L}^{2}_{\xi,\nu}(\FX)}
\|f\|^{1/2}_{\widetilde{L}^{\infty}_T \widetilde{L}^{2}_\xi (B^{s}_x)}
\Bigg],
\end{multline}
where $\FX$ denotes either the inhomogeneous critical Besov space $B^{3/2}_x$ or the homogeneous critical Besov space  $\dot{B}^{3/2}_x$.
\end{lemma}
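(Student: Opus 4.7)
The plan is to derive \eqref{nop.nhb.es.} as a direct corollary of Lemma \ref{basic.non.es.} by replacing the two Lebesgue--type factors $\|\cdot\|_{L^\infty_T L^2_\xi L^\infty_x}$ and $\|\cdot\|_{L^2_T L^2_{\xi,\nu} L^\infty_x}$ on the right-hand side of \eqref{basic.non.p1.} by the refined Chemin--Lerner Besov norms built on the critical space $\FX$. The crucial input is the Besov embedding $\FX \hookrightarrow L^\infty_x$, which holds for both $\FX=B^{3/2}_{2,1}$ and $\FX=\dot{B}^{3/2}_{2,1}$ on $\R^3_x$ thanks to the summation index $r=1$ and the critical regularity $s_0=3/2=3/p$ with $p=2$.

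The main step will be to establish the two auxiliary inequalities
\begin{equation*}
\|f\|_{L^\infty_T L^2_\xi L^\infty_x}\lesssim \|f\|_{\widetilde{L}^\infty_T \widetilde{L}^2_\xi(\FX)},\qquad
\|f\|_{L^2_T L^2_{\xi,\nu} L^\infty_x}\lesssim \|f\|_{\widetilde{L}^2_T \widetilde{L}^2_{\xi,\nu}(\FX)}.
\end{equation*}
For the first, I fix $(t,\xi)$ and use the pointwise embedding $\|f(t,\cdot,\xi)\|_{L^\infty_x}\lesssim \sum_{q}2^{3q/2}\|\Delta_q f(t,\cdot,\xi)\|_{L^2_x}$. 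Taking the $L^2_\xi$ norm and invoking Minkowski's inequality (which passes the $\ell^1$ sum over $q$ outside with no loss because $r=1$) yields
$\|f(t)\|_{L^2_\xi L^\infty_x}\lesssim \sum_{q}2^{3q/2}\|\Delta_q f(t)\|_{L^2_\xi L^2_x}$. Finally the subadditivity $\sup_t\sum_q\le \sum_q\sup_t$ brings the $\sup_t$ inside the sum, giving the desired Chemin--Lerner bound. The second inequality follows analogously: after the same pointwise embedding, I apply Minkowski in $L^2_t L^2_{\xi,\nu}$ to exchange the $\ell^1$ sum over dyadic blocks with the mixed space--time--velocity integrals. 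The homogeneous case is identical up to replacing $q\ge -1$ by $q\in\Z$ and $\Delta_q$ by $\dot\Delta_q$.

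Once these two embeddings are in hand, substituting them into each of the four terms on the right-hand side of \eqref{basic.non.p1.} directly produces the four terms on the right-hand side of \eqref{nop.nhb.es.}. No further estimate on the collision integral $\Gamma(f,g)$ is needed, since the nonlinear analysis --- the Bony decomposition into paraproducts $\mathcal{T}$ and remainder $\mathcal{R}$, the frequency-localized Cauchy--Schwarz/change of variable $(\xi,\xi_\ast)\to(\xi',\xi_\ast')$, and the discrete convolution inequality \eqref{con.ine} --- has already been absorbed into Lemma \ref{basic.non.es.}.

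I do not anticipate any serious obstacle: the whole content lies in the clean interplay of the $\ell^1$ summation in $B^{3/2}_{2,1}$ with Minkowski's inequality, which allows the critical Besov embedding to be lifted losslessly from the pointwise level to the Chemin--Lerner scale. The only point requiring modest care is the homogeneous setting, where $f$ must be interpreted modulo polynomials so that the low-frequency contribution $\dot\Delta_q f$ with $q\to-\infty$ is well-defined; but this is automatic under the standing assumption that all Chemin--Lerner norms on the right-hand side of \eqref{nop.nhb.es.} are finite.
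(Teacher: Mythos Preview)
Your proposal is correct and follows essentially the same route as the paper: both derive \eqref{nop.nhb.es.} as an immediate corollary of Lemma~\ref{basic.non.es.} together with the critical embedding $\FX\hookrightarrow L^\infty_x$, the only cosmetic difference being that the paper packages the Minkowski step (passing from $L^{\varrho_1}_T L^{\varrho_2}_\xi(B^s_{p,1})$ to $\widetilde{L}^{\varrho_1}_T\widetilde{L}^{\varrho_2}_\xi(B^s_{p,1})$) into the separate Lemma~\ref{BCLsb}, whereas you unwind that Minkowski argument by hand.
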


\begin{proof}
Noticing that $B^{3/2}_x\subset L^\infty_x$ and $\dot{B}^{3/2}_x\subset L^\infty_x$,
\eqref{nop.nhb.es.} follows from
\eqref{basic.non.p1.} in Lemma \ref{basic.non.es.} and \eqref{BCLb} in Lemma \ref{BCLsb}. This ends the proof of Lemma \ref{nh-h.non.es.}.
\end{proof}

The following is an immediate corollary of  Lemma \ref{basic.non.es.} and Lemma \ref{nh-h.non.es.}.

\begin{corollary}\label{nh-h.non.es2.}
Assume $s>0$, $0<T\leq \infty$. Let $f=f(t,x,\xi)$,  $g=g(t,x,\xi)$, and $h=h(t,x,\xi)$ be three suitably smooth distribution functions such that all the  norms on the right  of the following inequalities are well defined, then it holds that
\begin{multline}
\label{mi-ma-non.es1.}
\sum\limits_{q\geq-1}2^{qs}\left[\int_0^T\left|(\Delta_q\Gamma(\FP f, g),\Delta_q h)\right|dt\right]^{1/2}
\lesssim \|h\|^{1/2}_{\widetilde{L}^{2}_T \widetilde{L}^{2}_{\xi,\nu} (B^{s}_x)}\\
\times \left[\|g\|^{1/2}_{\widetilde{L}^{2}_T\widetilde{L}^{2}_{\xi,\nu} (B^{s}_x)}
\|\FP f\|^{1/2}_{\widetilde{L}^{\infty}_T\widetilde{L}^{2}_\xi(\FX)}+\|g\|^{1/2}_{\widetilde{L}^{2}_T\widetilde{L}^{2}_{\xi,\nu}(\FX) }
\|\FP f\|^{1/2}_{\widetilde{L}^{\infty}_T\widetilde{L}^{2}_\xi (B^{s}_x)}\right],
\end{multline}
\begin{multline}
\label{mi-ma-non.es2.}
\sum\limits_{q\geq-1}2^{qs}\left[\int_0^T\left|(\Delta_q\Gamma(f,\FP g),\Delta_q h)\right|dt\right]^{1/2}
\lesssim \|h\|^{1/2}_{\widetilde{L}^{2}_T \widetilde{L}^{2}_{\xi,\nu} (B^{s}_x)}\\
\times \left[\|f\|^{1/2}_{\widetilde{L}^{2}_T \widetilde{L}^{2}_{\xi,\nu}(B^{s}_x)}
\|\FP g\|^{1/2}_{\widetilde{L}^{\infty}_T \widetilde{L}^{2}_\xi(\FX)}+\|f\|^{1/2}_{\widetilde{L}^{2}_T \widetilde{L}^{2}_{\xi,\nu}(\FX)}
\|\FP g\|^{1/2}_{\widetilde{L}^{\infty}_T \widetilde{L}^{2}_\xi (B^{s}_x)}\right],
\end{multline}
and
\begin{multline}\label{mi-ma-non.es3.}
\sum\limits_{q\geq-1}2^{qs}\left[\int_0^T\left|(\Delta_q\Gamma(\FP f,\FP g),\Delta_q h)\right|dt\right]^{1/2}
\lesssim \|h\|^{1/2}_{\widetilde{L}^{2}_T \widetilde{L}^{2}_{\xi,\nu} (B^{s}_x)}\\
\times \left[\|\FP g\|^{1/2}_{\widetilde{L}^{\infty}_T \widetilde{L}^{2}_{\xi}(B^{s}_x)}
\|\FP f\|^{1/2}_{\widetilde{L}^{2}_T \widetilde{L}^{2}_\xi (\FX)}+\|\FP g\|^{1/2}_{\widetilde{L}^{\infty}_T \widetilde{L}^{2}_{\xi}(\FX)}
\|\FP f\|^{1/2}_{\widetilde{L}^{2}_T \widetilde{L}^{2}_\xi (B^{s}_x)}\right],
\end{multline}
where $\FX$ denotes either $B^{3/2}_x$ or $\dot{B}^{3/2}_x$ as in Lemma \ref{nh-h.non.es.}.
\end{corollary}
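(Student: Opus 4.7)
The plan is to revisit the proof of Lemma~\ref{basic.non.es.} (and Lemma~\ref{nh-h.non.es.}) when $f$ or $g$ is replaced by the macroscopic profile $\FP f=\{a+\xi\cdot b+(|\xi|^2-3)c\}\sqrt{\mu}$, exploiting the Gaussian factor $\sqrt{\mu(\xi_\ast)}$ to absorb any polynomial-in-$\xi_\ast$ weight coming from the collision kernel. The key structural observation is that, pointwise in $t$ and $\xi$,
$$
\int_{\R^3}|\xi-\xi_\ast|^\ga\,\|\FP f(\xi_\ast,\cdot)\|^2_{L^\infty_x}\,d\xi_\ast \;\lesssim\; \nu(\xi)\,\|(a,b,c)\|^2_{L^\infty_x},
$$
and likewise with $\|\cdot\|_{L^2_x}$ in place of $\|\cdot\|_{L^\infty_x}$; this follows from $|\xi-\xi_\ast|^\ga\mu(\xi_\ast)\lesssim(1+|\xi|)^\ga\mu^{1/2}(\xi_\ast)$ combined with the polynomial bound on $a+\xi_\ast\cdot b+(|\xi_\ast|^2-3)c$ absorbed into the remaining Gaussian.

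For \eqref{mi-ma-non.es1.}, the upshot is that the splitting $|\xi-\xi_\ast|^\ga\lesssim|\xi|^\ga+|\xi_\ast|^\ga$ employed in Lemma~\ref{basic.non.es.} becomes unnecessary: the full $\nu(\xi)$ weight always migrates onto the $g$-factor, producing the dissipation norm $\widetilde L^2_T\widetilde L^2_{\xi,\nu}(B^s_x)$, so only two of the four summands on the right of Lemma~\ref{basic.non.es.} survive. Concretely, running Cauchy--Schwarz and the Bony decomposition $\FP f_\ast\,g=\mathcal{T}_{\FP f_\ast}g+\mathcal{T}_g \FP f_\ast+\mathcal{R}(\FP f_\ast,g)$, the pieces $\mathcal{T}_{\FP f_\ast}g$ and $\mathcal{R}$ yield (via $L^p$-boundedness \eqref{bdop} of $\Delta_q,S_q$, H\"older's inequality in $t$, and the series convolution inequality analogous to \eqref{con.ine}) a bound of the form $\|(a,b,c)\|_{L^\infty_TL^\infty_x}\,\|g\|_{\widetilde L^2_T\widetilde L^2_{\xi,\nu}(B^s_x)}$, while $\mathcal{T}_g \FP f_\ast$ yields $\|(a,b,c)\|_{\widetilde L^\infty_T(B^s_x)}\,\|g\|_{L^2_TL^2_{\xi,\nu}L^\infty_x}$. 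Invoking the critical embedding $\FX\hookrightarrow L^\infty_x$ (valid both for $\FX=B^{3/2}$ and $\FX=\dot B^{3/2}$) and the identification $\|\FP f\|_{\widetilde L^\infty_T\widetilde L^2_\xi(X)}\sim\|(a,b,c)\|_{\widetilde L^\infty_T X}$ for $X=\FX$ or $X=B^s_x$, these produce exactly the two terms in \eqref{mi-ma-non.es1.} after taking square roots.

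The inequality \eqref{mi-ma-non.es2.} follows by the symmetric argument, with the Gaussian absorption now performed on the second argument of $\Gamma$. For \eqref{mi-ma-non.es3.}, both entries carry a Gaussian factor, and the same absorption scheme works on both sides simultaneously; the asymmetric placement of $\FP g$ into $\widetilde L^\infty_T\widetilde L^2_\xi$ versus $\FP f$ into $\widetilde L^2_T\widetilde L^2_\xi$ reflects the asymmetric role of the two arguments in the Bony decomposition, and the disappearance of the $\nu$-subscript on both factors reflects the Gaussian absorption on both sides. The main (minor) obstacle throughout is the bookkeeping of where the $\nu(\xi)$ weight settles after the absorption and which factor inherits $\widetilde L^\infty_T$ versus $\widetilde L^2_T$; this is resolved exactly as in the proof of Lemma~\ref{basic.non.es.}, completing the derivation of all three inequalities.
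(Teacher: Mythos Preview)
Your proposal is correct and follows essentially the same approach as the paper: both arguments revisit the estimate of the quantity $I$ from Lemma~\ref{basic.non.es.} and exploit the Gaussian factor $\sqrt{\mu}$ in $\FP f$ to ensure that $\FP f$ can always be placed in the $L^\infty_t$ norm, collapsing the four terms of \eqref{basic.non.p1.} to two. The paper phrases this reduction via the norm equivalence $\|\Delta_q\FP f\|_{L^2_{\xi,\nu}L^2_x}\sim\|\Delta_q\FP f\|_{L^2_{\xi}L^2_x}$ (so the $\nu$-weight on $\FP f$ is harmless and no exchange of $L^\infty_t$/$L^2_t$ between $\FP f$ and $g$ is needed), whereas you phrase it via the pointwise absorption $\int|\xi-\xi_\ast|^\ga\|\FP f_\ast\|^2\,d\xi_\ast\lesssim\nu(\xi)\|(a,b,c)\|^2$, which renders the splitting $|\xi-\xi_\ast|^\ga\lesssim|\xi|^\ga+|\xi_\ast|^\ga$ unnecessary; these are two equivalent descriptions of the same mechanism.
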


\begin{proof}
We prove the first estimate  \eqref{mi-ma-non.es1.} only, since \eqref{mi-ma-non.es2.} and \eqref{mi-ma-non.es3.} can be obtained in the same way. In fact,
\eqref{mi-ma-non.es1.} follows directly from Lemma \ref{basic.non.es.} with a slightly modification.
Applying $I$ from Lemma \ref{basic.non.es.} with $f=\FP f$,
and noticing that
$$
\|\Delta_q\FP f\|_{L^2_{\xi,\nu}L^2_x}\thicksim\|\Delta_q\FP f\|_{L^2_{\xi}L^2_x},\ \  \|S_j\FP f\|_{L^2_{\xi,\nu}L^2_x}\thicksim\|S_j\FP f\|_{L^2_{\xi}L^2_x},
$$
one can always take the $L^\infty_t-$norm of the terms involving $\FP f$, so that it is not necessary to exchange the
$L^\infty_t-$norm or $L^2_t-$norm of $\FP f$ or $g$. By this means, \eqref{mi-ma-non.es1.} can be verified through a tedious calculation,
we omit the details for brevity. This completes the proof of Corollary \ref{nh-h.non.es2.}.
\end{proof}

\section{Estimate on nonlinear term}

In this section we give the estimates on the nonlinear term $\Ga(f,f)$ and an estimate on the upper bound of $L f$. Recall \eqref{def.et} and \eqref{def.dt}.


\begin{lemma}\label{non.ED.} It holds that
\begin{eqnarray}\label{non.ED1.}
\sum\limits_{q\geq-1}2^{\frac{3q}{2}}\left[\int_0^T\left|(\Delta_q\Gamma(f,f),\Delta_q \{\FI-\FP\}f)\right|dt\right]^{1/2}
\lesssim\sqrt{\mathcal {E}_T (f)}\mathcal {D}_T(f),
\end{eqnarray}
for any $T>0$.
\end{lemma}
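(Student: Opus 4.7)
The plan is to split $f=\FP f+\{\FI-\FP\}f$ and expand $\Gamma(f,f)$ by bilinearity into four contributions:
\[
\Gamma(f,f)=\Gamma(\{\FI-\FP\}f,\{\FI-\FP\}f)+\Gamma(\FP f,\{\FI-\FP\}f)+\Gamma(\{\FI-\FP\}f,\FP f)+\Gamma(\FP f,\FP f).
\]
For each term I will apply one of the trilinear inequalities of Lemma \ref{nh-h.non.es.} or Corollary \ref{nh-h.non.es2.} with $s=3/2$ and $h=\{\FI-\FP\}f$, and then match the outcome to the template $\sqrt{\mathcal{E}_T(f)}\,\mathcal{D}_T(f)$. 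The guiding observations are: (i) $\|\{\FI-\FP\}f\|_{\widetilde{L}^{\infty}_T\widetilde{L}^{2}_{\xi}(B^{3/2}_x)}\lesssim \mathcal{E}_T(f)$ and $\|\{\FI-\FP\}f\|_{\widetilde{L}^{2}_T\widetilde{L}^{2}_{\xi,\nu}(B^{3/2}_x)}\leq \mathcal{D}_T(f)$; (ii) for macroscopic pieces, $\|\FP f\|_{\widetilde{L}^{\infty}_T\widetilde{L}^{2}_{\xi}(B^{3/2}_x)}\sim \|(a,b,c)\|_{\widetilde{L}^{\infty}_T(B^{3/2}_x)}\lesssim \mathcal{E}_T(f)$ by continuity of $\FP$, while the critical $L^{2}_T$ dissipative norm $\|\FP f\|_{\widetilde{L}^{2}_T\widetilde{L}^{2}_{\xi,\nu}(\dot{B}^{3/2}_x)}\sim \|(a,b,c)\|_{\widetilde{L}^{2}_T(\dot{B}^{3/2}_x)}$ is equivalent via Bernstein at each dyadic block to $\|\nabla_x(a,b,c)\|_{\widetilde{L}^{2}_T(\dot{B}^{1/2}_x)}$, which in turn is dominated by $\|\nabla_x(a,b,c)\|_{\widetilde{L}^{2}_T(B^{1/2}_x)}\leq \mathcal{D}_T(f)$.

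For the purely microscopic contribution I apply Lemma \ref{nh-h.non.es.} with $f=g=h=\{\FI-\FP\}f$ and $\FX=B^{3/2}_x$. Each of the four bracketed terms on the right reduces to the single form $\|\{\FI-\FP\}f\|^{1/2}_{\widetilde{L}^{2}_T\widetilde{L}^{2}_{\xi,\nu}(B^{3/2}_x)}\cdot\|\{\FI-\FP\}f\|^{1/2}_{\widetilde{L}^{\infty}_T\widetilde{L}^{2}_{\xi}(B^{3/2}_x)}$, which combined with the outer factor $\|\{\FI-\FP\}f\|^{1/2}_{\widetilde{L}^{2}_T\widetilde{L}^{2}_{\xi,\nu}(B^{3/2}_x)}$ is bounded by $\mathcal{D}_T(f)\sqrt{\mathcal{E}_T(f)}$.

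For the mixed pieces $\Gamma(\FP f,\{\FI-\FP\}f)$ and $\Gamma(\{\FI-\FP\}f,\FP f)$ I invoke \eqref{mi-ma-non.es1.} and \eqref{mi-ma-non.es2.} of Corollary \ref{nh-h.non.es2.} with $h=\{\FI-\FP\}f$ and $\FX=\dot{B}^{3/2}_x$. The $h$-factor again contributes $\sqrt{\mathcal{D}_T(f)}$; the microscopic factor inside the bracket contributes another $\sqrt{\mathcal{D}_T(f)}$; the macroscopic factor contributes either $\sqrt{\mathcal{E}_T(f)}$ (when it is $\|\FP f\|_{\widetilde{L}^{\infty}_T\widetilde{L}^{2}_{\xi}(\dot{B}^{3/2}_x)}$ or $\|\FP f\|_{\widetilde{L}^{\infty}_T\widetilde{L}^{2}_{\xi}(B^{3/2}_x)}$) or $\sqrt{\mathcal{D}_T(f)}$ (when it is $\|\FP f\|_{\widetilde{L}^{2}_T\widetilde{L}^{2}_{\xi,\nu}(\dot{B}^{3/2}_x)}$). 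In either case the product of the three factors is majorized by $\mathcal{D}_T(f)\sqrt{\mathcal{E}_T(f)}$ after a routine use of the elementary bound $\sqrt{AB}\leq \tfrac12(A+B)$ or by writing $\sqrt{\mathcal{D}_T}\cdot\sqrt{\mathcal{D}_T}\cdot\sqrt{\mathcal{E}_T}=\mathcal{D}_T\sqrt{\mathcal{E}_T}$. Finally, for $\Gamma(\FP f,\FP f)$ I use \eqref{mi-ma-non.es3.} with the same choice $\FX=\dot{B}^{3/2}_x$, where the two macroscopic arguments are balanced as one factor in $\widetilde{L}^{\infty}_T$ (bounded by $\sqrt{\mathcal{E}_T(f)}$) and one factor in $\widetilde{L}^{2}_T(\dot{B}^{3/2}_x)$ (bounded by $\sqrt{\mathcal{D}_T(f)}$), again yielding $\mathcal{D}_T(f)\sqrt{\mathcal{E}_T(f)}$.

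The chief technical obstacle is the mismatch between the dissipation norm actually present in $\mathcal{D}_T(f)$, which controls only $\nabla_x(a,b,c)$ in $\widetilde{L}^{2}_T(B^{1/2}_x)$, and the norm $\|\FP f\|_{\widetilde{L}^{2}_T\widetilde{L}^{2}_{\xi,\nu}(\,\cdot\,)}$ at the critical index $3/2$ demanded by the trilinear estimates. This is what forces the switch from the inhomogeneous space $B^{3/2}_x$ to the homogeneous space $\dot{B}^{3/2}_x$ in the Corollary \ref{nh-h.non.es2.} inequalities applied to macroscopic arguments, and relies precisely on the Bernstein-type identity $\|(a,b,c)\|_{\dot{B}^{3/2}_x}\sim \|\nabla_x(a,b,c)\|_{\dot{B}^{1/2}_x}$ together with $\widetilde{L}^{2}_T(\dot{B}^{1/2}_x)\hookrightarrow \widetilde{L}^{2}_T(B^{1/2}_x)$ on the gradient level. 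Once these identifications are set up, the remainder is bookkeeping: listing the four bilinear contributions, applying the respective trilinear estimates, and summing.
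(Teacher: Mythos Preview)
Your proposal is correct and follows essentially the same approach as the paper: the four-way macroscopic/microscopic splitting of $\Gamma(f,f)$, application of the trilinear estimates of Lemma \ref{nh-h.non.es.} and Corollary \ref{nh-h.non.es2.} at $s=3/2$ with $h=\{\FI-\FP\}f$, and the crucial passage through the homogeneous space $\dot{B}^{3/2}_x$ together with $\|(a,b,c)\|_{\dot{B}^{3/2}_x}\sim\|\nabla_x(a,b,c)\|_{\dot{B}^{1/2}_x}\lesssim\|\nabla_x(a,b,c)\|_{B^{1/2}_x}$ to convert the macroscopic $\widetilde{L}^2_T$ factor into $\mathcal{D}_T(f)$. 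Two cosmetic remarks: in \eqref{mi-ma-non.es1.}--\eqref{mi-ma-non.es2.} the $\FP$-factor always sits in $\widetilde{L}^\infty_T$ (so the alternative you list where it contributes $\sqrt{\mathcal{D}_T(f)}$ never actually occurs, and the paper simply takes $\FX=B^{3/2}_x$ there), and your final embedding should read $B^{1/2}_x\hookrightarrow\dot{B}^{1/2}_x$, not the reverse.
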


\begin{proof}
By the splitting $f=\FP f+\{\FI-\FP\}f$, we have
\begin{equation}\label{nonopsplit}
\begin{split}
\Gamma(f,f)
=&\Gamma({\bf P}f,{\bf P}f)+\Gamma({\bf P}f,\{{\bf I}-{\bf P}\}f)+\Gamma(\{{\bf I}-{\bf P}\}f,{\bf P}f)\\
&
+\Gamma(\{{\bf I}-{\bf P}\}f,\{{\bf I}-{\bf P}\}f).
\end{split}
\end{equation}
It now suffices to compute the left hand of \eqref{non.ED1.} in terms of the corresponding four terms on the right of \eqref{nonopsplit}.
In light of Corollary \ref{nh-h.non.es2.}, one can see that
\begin{multline*}
\sum\limits_{q\geq-1}2^{\frac{3q}{2}}\left[\int_0^T\left|(\Delta_q\Gamma(\FP f,\FP f),\Delta_q \{{\bf I}-{\bf P}\}f)\right|dt\right]^{1/2}\\
\lesssim\|\FP f\|^{1/2}_{\widetilde{L}^{2}_T \widetilde{L}^{2}_{\xi} (\dot{B}^{3/2}_x)}
\|\FP f\|^{1/2}_{\widetilde{L}^{\infty}_T \widetilde{L}^{2}_\xi (B^{3/2}_x)}
\|\{{\bf I}-{\bf P}\}f\|^{1/2}_{\widetilde{L}^{2}_T \widetilde{L}^{2}_{\xi,\nu}(B^{3/2}_x)}
\lesssim\sqrt{\mathcal {E}_T(f)}\mathcal {D}_T(f),
\end{multline*}
where we have used Lemma \ref{n-h-b} to ensure
\begin{multline}\notag
\|\FP f\|^{1/2}_{\widetilde{L}^{2}_T\widetilde{L}^{2}_{\xi} (\dot{B}^{3/2}_x)}\lesssim \|(a, b,c)\|^{1/2}_{\widetilde{L}^{2}_T (\dot{B}^{3/2}_x)}\sim \|\na_x(a, b,c)\|^{1/2}_{\widetilde{L}^{2}_T {(\dot{B}^{1/2}_x)}}\\
\lesssim\|\na_x(a, b,c)\|^{1/2}_{\widetilde{L}^{2}_T (B^{1/2}_x)}\lesssim\sqrt{\mathcal {D}_T(f)}.
\end{multline}
In a similar way, we next get from Corollary \ref{nh-h.non.es2.} that
\begin{multline*}
\sum\limits_{q\geq-1}2^{\frac{3q}{2}}\left[\int_0^T\left|(\Delta_q\Gamma(\FP f,\{{\bf I}-{\bf P}\}f),\Delta_q \{{\bf I}-{\bf P}\}f)\right|dt\right]^{1/2}
\\[2mm]
\lesssim \|\{{\bf I}-{\bf P}\}f\|^{1/2}_{\widetilde{L}^{2}_T \widetilde{L}^{2}_{\xi,\nu} (B^{3/2}_x)}
\|\FP f\|^{1/2}_{\widetilde{L}^{\infty}_T \widetilde{L}^{2}_{\xi} (B^{3/2}_x)}
\|\{{\bf I}-{\bf P}\}f\|^{1/2}_{\widetilde{L}^{2}_T \widetilde{L}^{2}_{\xi,\nu}(B^{3/2}_x)},
\end{multline*}
\begin{multline*}
\sum\limits_{q\geq-1}2^{\frac{3q}{2}}\left[\int_0^T\left|(\Delta_q\Gamma(\{{\bf I}-{\bf P}\} f,\FP f),\Delta_q \{{\bf I}-{\bf P}\}f)\right|dt\right]^{1/2}
\\[2mm]
\lesssim\|\FP f\|^{1/2}_{\widetilde{L}^{\infty}_T \widetilde{L}^{2}_{\xi}(B^{3/2}_x)}
\|\{{\bf I}-{\bf P}\}f\|^{1/2}_{\widetilde{L}^{2}_T \widetilde{L}^{2}_{\xi,\nu}(B^{3/2}_x)}
\|\{{\bf I}-{\bf P}\}f\|^{1/2}_{\widetilde{L}^{2}_T \widetilde{L}^{2}_{\xi,\nu}(B^{3/2}_x)},
\end{multline*}
and
\begin{multline*}
\sum\limits_{q\geq-1}2^{\frac{3q}{2}}\left[\int_0^T\left|(\Delta_q\Gamma(\{{\bf I}-{\bf P}\} f,\{{\bf I}-{\bf P}\}f),\Delta_q \{{\bf I}-{\bf P}\}f)\right|dt\right]^{1/2}
\\
\lesssim{\|\{{\bf I}-{\bf P}\}f\|^{1/2}_{\widetilde{L}^{\infty}_T \widetilde{L}^{2}_{\xi}(B^{3/2}_x)}
\|\{{\bf I}-{\bf P}\} f\|_{\widetilde{L}^{2}_T \widetilde{L}^{2}_{\xi,\nu}(B^{3/2}_x)}}.
\end{multline*}
Furthermore, it is straightforward to see that the above three estimates can be further bounded by $\sqrt{\mathcal {E}_T (f)}\mathcal {D}_T (f)$ up to a generic constant. Therefore \eqref{non.ED1.} follows from all the above estimates, and this completes the proof of Lemma  \ref{non.ED.}.
\end{proof}

The following lemma will be used in the process of  deducing the macroscopic dissipation rates in the next section.

\begin{lemma}\label{non.EDL2.}
Let $\zeta=\zeta(\xi)\in \mathcal {S}(\R^3_x)$ and $0<s\leq 3/2$. Then
it holds that
\begin{equation}\label{non.EDL21.}
\sum\limits_{q\geq-1}2^{qs}\left[\int_0^T\left\|\Delta_q\left( \Gamma(f,f),\zeta\right) \right\|_{L^2_x}^2dt\right]^{1/2}
\lesssim\mathcal {E}_T (f)\mathcal {D}_T (f),
\end{equation}
for any $T>0$, where the inner product $(\cdot,\cdot)$ on the left is taken with respect to velocity variable $\xi$ only.
\end{lemma}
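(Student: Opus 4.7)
The plan is to reduce the desired estimate to the trilinear bound of Lemma~\ref{basic.non.es.} by a Cauchy--Schwarz bootstrap, and then to close it by the $\FP$/$\{\FI-\FP\}$ decomposition already used in Lemma~\ref{non.ED.}. Since $\Delta_q$ acts only in $x$, it commutes with the $\xi$-pairing with $\zeta$, so
\[
\Phi_q(t,x):=\Delta_q(\Gamma(f,f),\zeta)_\xi(t,x)=(\Delta_q\Gamma(f,f)(t,x,\cdot),\zeta)_\xi.
\]
Writing $B_q:=\bigl[\int_0^T\|\Phi_q(t,\cdot)\|_{L^2_x}^2\,dt\bigr]^{1/2}$, the task is $\sum_{q\geq-1}2^{qs}B_q\lesssim\mathcal{E}_T(f)\mathcal{D}_T(f)$. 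The key observation is the bootstrap identity
\[
B_q^2=\int_0^T\bigl(\Delta_q\Gamma(f,f),\,\Phi_q(t,x)\,\zeta(\xi)\bigr)_{L^2_{x,\xi}}\,dt,
\]
obtained from $\|\Phi_q\|_{L^2_x}^2=(\Phi_q,\Phi_q)_{L^2_x}$ by reinserting the definition of $\Phi_q$ in one slot.

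Now apply the Cauchy--Schwarz step from the proof of Lemma~\ref{basic.non.es.} to the bilinear collision form in the variables $(t,x,\xi,\xi_*,\omega)$, splitting $\Gamma=\Gamma_{gain}-\Gamma_{loss}$ and performing the change of variables $(\xi,\xi_*)\to(\xi',\xi_*')$ in the gain part. Because $\zeta\in\mathcal{S}(\R^3_\xi)$, the decoupled slot yields $\lesssim C_\zeta\|\Phi_q\|_{L^2_T L^2_x}=C_\zeta B_q$ with $C_\zeta\lesssim\|\nu^{1/2}\zeta\|_{L^2_\xi}<\infty$, while the other slot produces the familiar quantity. Hence
\[
B_q^2\lesssim C_\zeta B_q\Bigl[\int|\xi-\xi_*|^\gamma|\Delta_q[f_* f]|^2\,dxdtd\xi d\xi_*d\omega\Bigr]^{1/2},
\]
and dividing by $B_q$ leaves
\[
B_q\lesssim C_\zeta\Bigl[\int|\xi-\xi_*|^\gamma|\Delta_q[f_* f]|^2\,dxdtd\xi d\xi_*d\omega\Bigr]^{1/2}.
\]
Summing against $2^{qs}$ produces exactly the quantity $I$ analyzed in the proof of Lemma~\ref{basic.non.es.} with $g=f$. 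Redoing the Bony decomposition $\Delta_q[f_* f]=\mathcal{T}_{f_*}f+\mathcal{T}_f f_*+\mathcal{R}(f_*,f)$ and the $I_1,I_2,I_3$ estimates verbatim yields
\[
\sum_{q\geq-1}2^{qs}B_q\lesssim\|f\|_{\widetilde L^\infty_T\widetilde L^2_\xi(B^s_x)}\|f\|_{L^2_T L^2_{\xi,\nu}L^\infty_x}+\|f\|_{\widetilde L^2_T\widetilde L^2_{\xi,\nu}(B^s_x)}\|f\|_{L^\infty_T L^2_\xi L^\infty_x}.
\]

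To convert these factors into $\mathcal{E}_T\mathcal{D}_T$, decompose $f=\FP f+\{\FI-\FP\}f$ and expand into four bilinear pieces as in Lemma~\ref{non.ED.}. For $0<s\leq3/2$, the embeddings $B^s_x\hookrightarrow B^{3/2}_x\hookrightarrow L^\infty_x$ and the definitions \eqref{def.et}--\eqref{def.dt} give $\|\{\FI-\FP\}f\|_{\widetilde L^2_T\widetilde L^2_{\xi,\nu}(B^s_x)}+\|\{\FI-\FP\}f\|_{L^2_T L^2_{\xi,\nu}L^\infty_x}\lesssim\mathcal{D}_T$ and $\|\{\FI-\FP\}f\|_{\widetilde L^\infty_T\widetilde L^2_\xi(B^s_x)}+\|\{\FI-\FP\}f\|_{L^\infty_T L^2_\xi L^\infty_x}\lesssim\mathcal{E}_T$. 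For the macroscopic piece, $\|\FP f\|_{\widetilde L^\infty_T\widetilde L^2_\xi(B^s_x)}\lesssim\mathcal{E}_T$, while
\[
\|\FP f\|_{\widetilde L^2_T\widetilde L^2_{\xi,\nu}(B^s_x)}\sim\|(a,b,c)\|_{\widetilde L^2_T(B^s_x)}\sim\|\nabla_x(a,b,c)\|_{\widetilde L^2_T(B^{s-1}_x)}\lesssim\mathcal{D}_T
\]
by the Bernstein-type equivalence already invoked in the proof of Lemma~\ref{non.ED.}. Each of the four cross terms therefore factors as $\mathcal{E}_T\cdot\mathcal{D}_T$, completing the proof of \eqref{non.EDL21.}.

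The decisive step, and in my view the only non-routine one, is the bootstrap identity above: the target norm is quadratic in $\Phi_q$, whereas Lemma~\ref{basic.non.es.} supplies control only of $\bigl[\int_0^T|(\cdot,\cdot)|dt\bigr]^{1/2}$, which is linear in the test function. Rewriting $\|\Phi_q\|_{L^2_x}^2$ as the pairing $(\Delta_q\Gamma(f,f),\Phi_q\zeta)_{L^2_{x,\xi}}$ and then using the Cauchy--Schwarz splitting of the collision integral to extract and cancel one factor of $B_q$ is what bridges this structural mismatch and makes the $I$-bound of Lemma~\ref{basic.non.es.} directly applicable. Everything after this step is standard Chemin--Lerner bookkeeping combined with the macro/micro splitting of Lemma~\ref{non.ED.}.
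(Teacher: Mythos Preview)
Your reduction to the quantity $I$ of Lemma~\ref{basic.non.es.} is correct and coincides with the paper's, though your bootstrap identity is more elaborate than needed: the paper simply applies H\"older in $(\xi,\xi_*,\omega)$ pointwise in $(t,x)$ to get $|(\Delta_q\Gamma(f,g),\zeta)_\xi|\lesssim\bigl(\int|\xi-\xi_*|^\gamma|\Delta_q[f_* g]|^2\,d\xi d\xi_*\bigr)^{1/2}$, then takes $L^2_{t,x}$ and sums. No self-referential pairing is required.

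There is, however, a genuine gap in your closing step. You assert
\[
\|(a,b,c)\|_{\widetilde L^2_T(B^s_x)}\sim\|\nabla_x(a,b,c)\|_{\widetilde L^2_T(B^{s-1}_x)},
\]
but this equivalence is false for the \emph{inhomogeneous} space: the block $\Delta_{-1}(a,b,c)$ contributes to the left-hand side yet is not recovered from $\nabla_x$. The Bernstein-type equivalence you cite from Lemma~\ref{non.ED.} was stated there for the \emph{homogeneous} space $\dot B^{3/2}_x$ (see \eqref{n-h-b}), and that distinction is exactly the point. Consequently, when you split into the four pieces and reach the $(\FP f,\FP f)$ term, the factor $\|\FP f\|_{\widetilde L^2_T\widetilde L^2_{\xi,\nu}(B^s_x)}$ in your second product is not controlled by $\mathcal D_T(f)$.

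The paper handles this by invoking Corollary~\ref{nh-h.non.es2.} (specifically \eqref{mi-ma-non.es3.}) rather than the raw bound from Lemma~\ref{basic.non.es.}: since $\|\FP f\|_{L^2_{\xi,\nu}}\sim\|\FP f\|_{L^2_\xi}$, one may always place the $L^2_T$-norm of $\FP f$ on the $\FX$-slot with $\FX=\dot B^{3/2}_x$, giving
\[
I\lesssim \|\FP f\|_{\widetilde L^\infty_T\widetilde L^2_\xi(B^s_x)}\,\|\FP f\|_{\widetilde L^2_T\widetilde L^2_\xi(\dot B^{3/2}_x)}\lesssim \mathcal E_T(f)\,\|\nabla_x(a,b,c)\|_{\widetilde L^2_T(\dot B^{1/2}_x)}\lesssim \mathcal E_T(f)\mathcal D_T(f).
\]
Your argument is repaired by this substitution; but as written the macroscopic $L^2_T$ estimate does not close. (Minor: your embedding $B^s_x\hookrightarrow B^{3/2}_x$ is also written in the wrong direction, though the conclusions you draw from it happen to be the correct ones.)
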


\begin{proof}
We first consider the general case of $\Gamma(f,g)$ instead of $\Ga(f,f)$.
By H\"older's inequality and the change of variable $(\xi,\xi_\ast)\to(\xi',\xi'_\ast)$, it follows that
\begin{equation}\label{non.L2.}
\begin{split}
\left|\Delta_q\left(\Gamma(f,g),\zeta\right) \right|
\leq& \left(\int_{\R^6}d\xi d\xi_{\ast}\,
|\xi-\xi_{\ast}|^\ga\mu^{1/2}(\xi'_\ast)\left|\Delta_q[f_{\ast}g]\right|^2\right)^{1/2}\\
&\qquad \qquad \times\left(\int_{\R^6}d\xi d\xi_{\ast}\,
|\xi-\xi_{\ast}|^\ga\mu^{1/2}(\xi_\ast)|\zeta(\xi)|^2\right)^{1/2}
\\&+\left(\int_{\R^6}d\xi d\xi_{\ast}\,
|\xi-\xi_{\ast}|^\ga\mu^{1/2}(\xi_\ast)\left|\Delta_q[f_{\ast}g]\right|^2\right)^{1/2}\\
&\qquad\qquad\quad \times\left(\int_{\R^6}d\xi d\xi_{\ast}\,
|\xi-\xi_{\ast}|^\ga\mu^{1/2}(\xi_\ast)|\zeta(\xi)|^2\right)^{1/2}\\
\lesssim& \left(\int_{\R^6}d\xi d\xi_{\ast}\,
|\xi-\xi_{\ast}|^\ga\left|\Delta_q[f_{\ast}g]\right|^2\right)^{1/2}.
\end{split}
\end{equation}
With \eqref{non.L2.} in hand, one can further deduce
\begin{equation}\notag
\begin{split}
\sum\limits_{q\geq-1}2^{qs}&\left[\int_0^T\left\|\Delta_q\left(\Gamma(f,g),\zeta\right)\right\|_{L^2_x}^2dt\right]^{1/2}\\
\lesssim& \sum\limits_{q\geq-1}2^{qs}\left(\int_0^Tdt\int_{\R^9}dxd\xi d\xi_{\ast}\,
|\xi-\xi_{\ast}|\left|\Delta_q[f_{\ast}g]\right|^2\right)^{1/2}:=I.
\end{split}
\end{equation}
Recalling that we have obtained the estimates for $I$ in the
proof of Lemma  \ref{basic.non.es.} and Lemma \ref{nh-h.non.es.}, i.e.
\begin{equation}\label{non.L2.p2}{
\begin{split}
I\lesssim&\bigg[\|g\|_{\widetilde{L}^{2}_T\widetilde{L}^{2}_{\xi,\nu} (B^{s}_x)}
\|f\|_{\widetilde{L}^{\infty}_T \widetilde{L}^{2}_\xi(\FX)}
+\|f\|_{\widetilde{L}^{2}_T \widetilde{L}^{2}_{\xi,\nu}(\FX)}
\|g\|_{\widetilde{L}^{\infty}_T \widetilde{L}^{2}_\xi (B^{s}_x)}\\[3mm]
&\qquad\qquad + \|g\|_{\widetilde{L}^{2}_T \widetilde{L}^{2}_{\xi,\nu}(\FX)}
\|f\|_{\widetilde{L}^{\infty}_T \widetilde{L}^{2}_\xi (B^{s}_x)}+\|f\|_{\widetilde{L}^{2}_T \widetilde{L}^{2}_{\xi,\nu} (B^{s}_x)}
\|g\|_{\widetilde{L}^{\infty}_T \widetilde{L}^{2}_\xi(\FX)}
\bigg],
\end{split}}
\end{equation}
where $\FX$ denotes either $B^{3/2}_x$ or $\dot{B}^{3/2}_x$.

In particular, if $\Ga(\FP f,\FP f)$ is considered, it follows from Corollary \ref{nh-h.non.es2.} that
\begin{equation}\notag
\begin{split}
I\lesssim\|\FP f\|_{\widetilde{L}^{\infty}_T \widetilde{L}^{2}_{\xi} (B^{s}_x)}
\|\FP f\|_{\widetilde{L}^{2}_T \widetilde{L}^{2}_\xi (\dot{B}^{3/2}_x)}\lesssim\mathcal {E}_T (f)\mathcal {D}_T (f),
\end{split}
\end{equation}
where we have used $0<s\leq 3/2$.
Recalling the splitting \eqref{nonopsplit} and applying \eqref{non.L2.p2} and Corollary \ref{nh-h.non.es2.}, the other three terms corresponding to the splitting \eqref{nonopsplit}
can be computed as:
\begin{multline}\notag
\sum\limits_{q\geq-1}2^{qs}\left[\int_0^T\left\|\Delta_q\left (\Gamma(\FP f,\{{\bf I}-{\bf P}\} f),\zeta\right) \right\|_{L^2_x}^2dt\right]^{1/2}
\\[2mm]+\sum\limits_{q\geq-1}2^{qs}\left[\int_0^T\left\|\Delta_q\left (\Gamma(\{{\bf I}-{\bf P}\} f,\FP f),\zeta\right) \right\|_{L^2_x}^2dt\right]^{1/2}\\[2mm]
\lesssim\|\{{\bf I}-{\bf P}\}f\|_{\widetilde{L}^{2}_t\widetilde{L}^{2}_{\xi,\nu}(B^{s}_x)}
\|\FP f\|_{\widetilde{L}^{\infty}_T \widetilde{L}^{2}_{\xi} (B^{3/2}_x)}+\|\{{\bf I}-{\bf P}\}f\|_{\widetilde{L}^{2}_t\widetilde{L}^{2}_{\xi,\nu}(B^{3/2}_x)}
\|\FP f\|_{\widetilde{L}^{\infty}_T \widetilde{L}^{2}_{\xi} (B^{s}_x)},
\end{multline}
and
\begin{multline}\notag
\sum\limits_{q\geq-1}2^{qs}\left[\int_0^T\left\|\Delta_q\left (\Gamma(\{{\bf I}-{\bf P}\} f,\{{\bf I}-{\bf P}\} f),\zeta\right) \right\|_{L^2_x}^2dt\right]^{1/2}
\\
\lesssim\|\{{\bf I}-{\bf P}\}f\|_{\widetilde{L}^{2}_T \widetilde{L}^{2}_{\xi}(B^{s}_x)}
\|\{{\bf I}-{\bf P}\} f\|_{\widetilde{L}^{\infty}_T \widetilde{L}^{2}_{\xi,\nu}(B^{3/2}_x)}\\+
\|\{{\bf I}-{\bf P}\}f\|_{\widetilde{L}^{2}_T \widetilde{L}^{2}_{\xi}(B^{3/2}_x)}
\|\{{\bf I}-{\bf P}\} f\|_{\widetilde{L}^{\infty}_T \widetilde{L}^{2}_{\xi,\nu}(B^{s}_x)},
\end{multline}
which both can be further bounded by $\mathcal {E}_T (f)\mathcal {D}_T (f)$ up to a generic constant.
Combing all the above estimates, we obtain \eqref{non.EDL21.}.
This completes the proof of Lemma \ref{non.EDL2.}.
\end{proof}

Finally we give an estimate on the upper bound of the  linear term $Lf$.

\begin{lemma}\label{upd.L}
Let $\zeta=\zeta(\xi)\in \mathcal {S}(\R^3_\xi)$ and {$s>0$}. Then
it holds that
\begin{equation}\label{upd.L.p1}
\sum\limits_{q\geq-1}2^{qs}\left[\int_0^T\left\|\Delta_q\left ( L\{\FI-\FP\}f,\zeta\right) \right\|_{L^2_x}^2dt\right]^{1/2}
\lesssim\|\{{\bf I}-{\bf P}\} f\|_{\widetilde{L}^{2}_T \widetilde{L}^{2}_{\xi,\nu}(B^{s}_x)}.
\end{equation}
for any $T>0$,  where the inner product $(\cdot,\cdot)$ on the left is taken with respect to velocity variable $\xi$ only.
\end{lemma}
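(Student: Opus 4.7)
The key observation is that $L$ acts only on the velocity variable $\xi$ while $\Delta_q$ acts only on $x$, so the two operators commute. In particular
$$
\Delta_q\bigl(L\{\FI-\FP\}f,\zeta\bigr)(t,x)=\bigl(L\Delta_q\{\FI-\FP\}f,\zeta\bigr)(t,x),
$$
where $(\cdot,\cdot)$ is the inner product in $\xi$. The plan is then to bound $|(Lg,\zeta)_\xi|$ pointwise in $(t,x)$ by $\|g\|_{L^2_{\xi,\nu}}$ times a finite constant depending on $\zeta$, and subsequently take $L^2_x$, $L^2_T$ and weighted $\ell^1_q$ norms in that order.

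For the pointwise bound, write $L=\nu-K$ with $\nu(\xi)\sim(1+|\xi|)^\gamma$. For the multiplier part, Cauchy--Schwarz in $\xi$ gives
$$
|(\nu g,\zeta)_\xi|\leq \left(\int_{\R^3}\nu(\xi)|g|^2\,d\xi\right)^{1/2}\left(\int_{\R^3}\nu(\xi)|\zeta|^2\,d\xi\right)^{1/2}\lesssim \|g\|_{L^2_{\xi,\nu}},
$$
since $\zeta\in\CS(\R^3_\xi)$ makes the second factor finite. For the integral part, I would use the self-adjointness of $K$ on $L^2_\xi$ (which is inherited from the self-adjointness of $L$ and of the multiplier $\nu$) to rewrite $(Kg,\zeta)_\xi=(g,K\zeta)_\xi$, and then apply Cauchy--Schwarz with the weight $\nu$:
$$
|(g,K\zeta)_\xi|\leq \|g\|_{L^2_{\xi,\nu}}\,\|\nu^{-1/2}K\zeta\|_{L^2_\xi}.
$$
The Schwartz decay of $\zeta$ together with standard kernel estimates for $K_1$ and $K_2$ (as recalled in \eqref{def.k1}--\eqref{K.def.}) shows that $K\zeta$ is a smooth rapidly decreasing function of $\xi$, so $\nu^{-1/2}K\zeta\in L^2_\xi$. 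Combining the two pieces yields
$$
|(Lg,\zeta)_\xi(t,x)|\lesssim \|g(t,x,\cdot)\|_{L^2_{\xi,\nu}}
$$
uniformly in $(t,x)$.

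Applying this with $g=\Delta_q\{\FI-\FP\}f$ and squaring and integrating in $x$, then in $t\in[0,T]$, gives
$$
\left[\int_0^T\bigl\|\Delta_q(L\{\FI-\FP\}f,\zeta)\bigr\|_{L^2_x}^2\,dt\right]^{1/2}\lesssim \|\Delta_q\{\FI-\FP\}f\|_{L^2_TL^2_{\xi,\nu}L^2_x}.
$$
Multiplying by $2^{qs}$ and summing over $q\geq-1$ produces exactly the norm $\|\{\FI-\FP\}f\|_{\widetilde{L}^{2}_T\widetilde{L}^{2}_{\xi,\nu}(B^s_x)}$ on the right-hand side, which is \eqref{upd.L.p1}.

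There is no real obstacle in this argument; unlike the trilinear estimates of Lemma \ref{basic.non.es.}, here the linearity of $L$ and the fact that it commutes with $\Delta_q$ reduce everything to a single Cauchy--Schwarz inequality against the fixed test function $\zeta$. The only point that requires a small amount of care is verifying that $\nu^{-1/2}K\zeta\in L^2_\xi$, which is a standard consequence of the Schwartz decay of $\zeta$ and the explicit kernel structure of $K=K_2-K_1$.
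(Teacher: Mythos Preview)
Your argument is correct, but it takes a different route from the paper. The paper observes the identity
\[
L\{\FI-\FP\}f=-\bigl\{\Gamma(\{\FI-\FP\}f,\mu^{1/2})+\Gamma(\mu^{1/2},\{\FI-\FP\}f)\bigr\},
\]
and then appeals to the computations already carried out in the proof of Lemma~\ref{non.EDL2.} (specifically the bound \eqref{non.L2.} and the estimate on $I$); since one argument of $\Gamma$ is the $x$-independent function $\mu^{1/2}$, the Bony decomposition collapses and the result drops out. Your approach instead uses the splitting $L=\nu-K$, commutes $\Delta_q$ past $L$, and bounds $(\nu g,\zeta)_\xi$ and $(Kg,\zeta)_\xi=(g,K\zeta)_\xi$ by Cauchy--Schwarz with the weight $\nu$. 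This is more self-contained and avoids any reference to the trilinear machinery. A small remark: you do not actually need the rapid decay of $K\zeta$; since $\nu(\xi)\sim(1+|\xi|)^\gamma$ is bounded below by a positive constant, $\|\nu^{-1/2}K\zeta\|_{L^2_\xi}\lesssim\|K\zeta\|_{L^2_\xi}\lesssim\|\zeta\|_{L^2_\xi}$ follows immediately from the $L^2_\xi$-boundedness of $K$ recorded in Lemma~\ref{K.es.}.
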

\begin{proof}
Since $L\{\FI-\FP\}f$ can be rewritten as
$$
L\{\FI-\FP\}f=-\left\{\Gamma(\{\FI-\FP\}f,\mu^{1/2})+\Gamma(\mu^{1/2},\{\FI-\FP\}f)\right\},
$$
then \eqref{upd.L.p1} follows directly from similar estimates as in the proof of Lemma \ref{non.EDL2.}. 
This ends the proof of Lemma \ref{upd.L}.
\end{proof}

\section{Estimate on macroscopic dissipation}

In this section, we would obtain the  macroscopic dissipation rate basing on the equation \eqref{f.eq.}.

\begin{lemma}\label{macro.dis.}
It holds that
\begin{multline}\label{macro.dis.p1}
\|\na_x(a,b,c)\|_{\widetilde{L}^{2}_T{(B^{1/2}_x)}}
\lesssim \|f_0\|_{\widetilde{L}^{2}_{\xi}(B^{3/2}_x)}+\CE_T(f)
\\ +\|\{\FI-\FP\}f\|_{\widetilde{L}^{2}_T \widetilde{L}^{2}_{\xi,\nu} (B^{3/2}_x)}+\mathcal {E}_T (f)\mathcal {D}_T (f),
\end{multline}
for any $T>0$.
\end{lemma}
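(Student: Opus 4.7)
The plan is to follow the macroscopic energy argument of \cite{D-08,DS-VPB} adapted to the Chemin--Lerner framework, with the new trilinear bounds of Section~4 doing the heavy lifting for the source terms. Testing \eqref{f.eq.} against the thirteen moments $\sqrt{\mu}$, $\xi_i\sqrt{\mu}$, $(|\xi|^2-3)\sqrt{\mu}$, $(|\xi|^2-5)\xi_i\sqrt{\mu}$, and $(\xi_i\xi_m-\delta_{im})\sqrt{\mu}$ yields the closed fluid-type system \eqref{mac.law}, which couples local conservation laws for $(a,b,c)$ with evolution equations for the velocity moments $\Lambda_i(\{\FI-\FP\}f)$ and $\Theta_{im}(\{\FI-\FP\}f)$, driven by $\mathbbm{h}:=-L\{\FI-\FP\}f+\Gamma(f,f)$.

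First I would apply $\Delta_q$ to this system for each $q\geq -1$ and construct an $x$-frequency-localized interactive functional $\mathcal{E}^{\mathrm{int}}_q(t)$, a bilinear combination of $\Delta_q(a,b,c)$ with spatial derivatives of velocity moments of $\Delta_q\{\FI-\FP\}f$, chosen so that a direct calculation produces an identity of the form
\begin{equation*}
\frac{d}{dt}\mathcal{E}^{\mathrm{int}}_q(t)+\lambda\,\|\nabla_x\Delta_q(a,b,c)\|_{L^2_x}^2
\lesssim \|\Delta_q\{\FI-\FP\}f\|_{L^2_{\xi,\nu}L^2_x}^2+\mathcal{R}_q(t),
\end{equation*}
where $|\mathcal{E}^{\mathrm{int}}_q(t)|\lesssim \|\Delta_q f(t)\|_{L^2_{x,\xi}}^2$ and the remainder $\mathcal{R}_q(t)$ collects the pairings of $\Lambda_i(\Delta_q\mathbbm{h})$ and $\Theta_{im}(\Delta_q\mathbbm{h})$ against $\nabla_x\Delta_q(a,b,c)$. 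Integrating over $t\in[0,T]$, taking square roots, multiplying by $2^{q/2}$, and summing in $q\geq -1$ assembles the left-hand side into $\|\nabla_x(a,b,c)\|_{\widetilde{L}^2_T(B^{1/2}_x)}$. The boundary contributions at $t=0$ and $t=T$ are dominated by $\sum_{q\geq -1}2^{q/2}\sup_{t\in[0,T]}\|\Delta_q f(t)\|_{L^2_{x,\xi}}$, which by the trivial inclusion $B^{3/2}_x\hookrightarrow B^{1/2}_x$ is bounded by $\|f_0\|_{\widetilde{L}^{2}_{\xi}(B^{3/2}_x)}+\mathcal{E}_T(f)$; the pure microscopic term summed with $2^{q/2}$ is likewise controlled by $\|\{\FI-\FP\}f\|_{\widetilde{L}^2_T\widetilde{L}^2_{\xi,\nu}(B^{1/2}_x)}\lesssim \|\{\FI-\FP\}f\|_{\widetilde{L}^2_T\widetilde{L}^2_{\xi,\nu}(B^{3/2}_x)}$.

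The main obstacle, as the authors flag, is the remainder piece, namely
\begin{equation*}
\sum_{q\geq -1}2^{q/2}\Bigl(\|\Lambda_i(\Delta_q\mathbbm{h})\|_{L^2_TL^2_x}+\|\Theta_{im}(\Delta_q\mathbbm{h})\|_{L^2_TL^2_x}\Bigr).
\end{equation*}
Splitting $\mathbbm{h}=-L\{\FI-\FP\}f+\Gamma(f,f)$, I would apply Lemma~\ref{upd.L} with $s=1/2$ to the linear part, bounding it by $\|\{\FI-\FP\}f\|_{\widetilde{L}^2_T\widetilde{L}^2_{\xi,\nu}(B^{1/2}_x)}\lesssim \|\{\FI-\FP\}f\|_{\widetilde{L}^2_T\widetilde{L}^2_{\xi,\nu}(B^{3/2}_x)}$, and Lemma~\ref{non.EDL2.} with $s=1/2$ to the nonlinear part, bounding it by $\mathcal{E}_T(f)\mathcal{D}_T(f)$. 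After a Cauchy--Schwarz or Young step to absorb the cross product $\|\nabla_x(a,b,c)\|_{\widetilde{L}^2_T(B^{1/2}_x)}\cdot(\cdots)$ into the left-hand side, the desired estimate \eqref{macro.dis.p1} falls out. The only delicate point is ensuring that the moments $\Lambda_i$ and $\Theta_{im}$, being rapidly decaying in $\xi$, fall under the scope of the test function $\zeta\in\mathcal{S}(\R^3_\xi)$ in Lemmas~\ref{non.EDL2.} and~\ref{upd.L}; this is immediate from the definitions.
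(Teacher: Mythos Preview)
Your proposal is correct and follows the same route as the paper: build the frequency-localized interactive functional from \eqref{mac.law}, derive a differential inequality of the type \eqref{lem.m3f.sum1}, integrate in $t$, take square roots, sum with weight $2^{q/2}$, and handle the $\mathbbm{h}$-moments via Lemmas~\ref{non.EDL2.} and~\ref{upd.L}. One minor correction: since $\mathcal{E}^{\mathrm{int}}_q$ necessarily carries one spatial derivative, the true pointwise bound is $|\mathcal{E}^{\mathrm{int}}_q(t)|\lesssim 2^q\|\Delta_q f(t)\|_{L^2_{x,\xi}}^2$ by Bernstein (and the transport remainder $\mathbbm{r}=-\xi\cdot\nabla_x\{\FI-\FP\}f$ contributes $\|\nabla_x\Delta_q\{\FI-\FP\}f\|_{L^2_{x,\xi}}^2$ rather than the $\nu$-weighted term you wrote), but both slips are harmless because after summation with $2^{q/2}$ everything is still controlled by $\mathcal{E}_T(f)$ and $\|\{\FI-\FP\}f\|_{\widetilde{L}^2_T\widetilde{L}^2_{\xi,\nu}(B^{3/2}_x)}$.
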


\begin{proof}
First,
as in \cite{D-11,DS-VPB},
by taking the following velocity moments
\begin{equation*}
    \mu^{1/2}, \xi_i\mu^{1/2}, \frac{1}{6}(|\xi|^2-3)\mu^{1/2},
    (\xi_i{\xi_m}-1)\mu^{1/2}, \frac{1}{10}(|\xi|^2-5)\xi_i \mu^{1/2}
\end{equation*}
with {$1\leq i,m\leq 3$} for the equation \eqref{f.eq.},
the coefficient functions $(a,b,c)$ of the macroscopic component $\FP f$ given by \eqref{Pf} satisfy
the fluid-type system
\begin{equation}\label{mac.law}
    \left\{\begin{array}{l}
      \dis \pa_t a +\na_x\cdot b =0,\\[3mm]
      \dis \pa_t b +\na_x (a+2c)+\na_x\cdot \highG (\{\FI-\FP\} f)=0,\\[3mm]
      \dis \pa_t c +\frac{1}{3}\na_x\cdot b +\frac{1}{6}\na_x\cdot
      \Lambda (\{\FI-\FP\} f)=0,\\[3mm]
      \dis \pa_t[\highG_{{ im}}(\{\FI-\FP\} f)+2c\de_{{ im}}]+\pa_ib_m+\pa_m
      b_i=\highG_{im}(\mathbbm{r}+\mathbbm{h}),\\[3mm]
      \dis \pa_t \Lambda_i(\{\FI-\FP\} f)+\pa_i c = \Lambda_i(\mathbbm{r}+\mathbbm{h}),
    \end{array}\right.
\end{equation}
where the
high-order moment functions $\highG=(\highG_{im}(\cdot))_{3\times 3}$ and
$\highB=(\highB_i(\cdot))_{1\leq i\leq 3}$ are defined by
\begin{eqnarray}
  \highG_{im}(f) = \left ((\xi_i\xi_m-1)\mu^{1/2}, f\right),\ \ \
  \highB_i(f)=\frac{1}{10}\left ((|\xi|^2-5)\xi_i\mu^{1/2},
  f\right),\notag
\end{eqnarray}
with the inner produce taken with respect to velocity variable $\xi$ only, and the terms $\mathbbm{r}$ and $\mathbbm{h}$ are given by
\begin{eqnarray}\notag
\mathbbm{r}= -\xi\cdot \na_x \{\FI-\FP\}f,\ \ \mathbbm{h}=-L \{\FI-\FP\}f+\Gamma(f,f).
\end{eqnarray}
Applying $\Delta_q$ with $q\geq-1$ to the system \eqref{mac.law}, we obtain
\begin{equation}\notag
    \left\{\begin{array}{l}
      \dis \pa_t \Delta_qa +\na_x\cdot \Delta_qb =0,\\[3mm]
      \dis \pa_t \Delta_qb +\na_x \Delta_q(a+2c)+\na_x\cdot \highG (\Delta_q\{\FI-\FP\} f)=0,\\[3mm]
      \dis \pa_t \Delta_qc +\frac{1}{3}\na_x\cdot \Delta_qb +\frac{1}{6}\na_x\cdot
      \Lambda (\Delta_q\{\FI-\FP\} f)=0,\\[3mm]
      \dis \pa_t [\highG_{im}(\Delta_q\{\FI-\FP\} f)+2\Delta_qc\de_{im}]+\pa_i\Delta_qb_m+\pa_m
      \Delta_qb_i=\highG_{im}(\Delta_q\mathbbm{r}+\Delta_q\mathbbm{h}),\\[3mm]
      \dis \pa_t \Lambda_i(\Delta_q\{\FI-\FP\} f)+\pa_i \Delta_qc = \Lambda_i(\Delta_q\mathbbm{r}+\Delta_q\mathbbm{h}).
    \end{array}\right.
\end{equation}
Now, in a similar way as in \cite{D-08}, one can prove from the above system that
\begin{multline}
 \frac{d}{dt}\mathcal {E}^{int}_{q}(f(t))+\la\|\Delta_q \na_x (a,b,c)\|_{L^2_x}^2
\lesssim\|\Delta_q\na_x\{\FI-\FP\}  f\|_{L^2_\xi L^2_x}^2\\
+C\sum_{i=1}^3\left\|\Lambda_i\left(\Delta_q\mathbbm{h}\right)\right\|_{L^2_x}^2
+C\sum_{i,m=1}^3\|\highG_{im}(\Delta_q\mathbbm{h})\|_{L^2_x}^2,\label{lem.m3f.sum1}
\end{multline}
for any $t\geq 0$, where the temporal interactive functional $\mathcal {E}^{int}_{q}(f(t))$ for each $q\geq -1$ is defined by
\begin{eqnarray}
\mathcal {E}^{int}_{q}(f(t))&=&\sum_{i=1}^3 (\Delta_q \pa_ic,  \Lambda_i(\Delta_q\{\FI-\FP\}  f))\notag\\
&&+\kappa_1\sum_{i,m=1}^3 ( \pa_i\Delta_q b_m+\pa_m\Delta_q b_i , \highG_{im}(\{\FI-\FP\}\Delta_q f)\notag\\
&&+\kappa_2\sum_{i=1}^3 (\Delta_q \pa_ia, \Delta_q b_i),
\label{inenq.def.}
\end{eqnarray}
with suitably chosen constants $0<\kappa_2\ll\kappa_1\ll1$.
Integrating \eqref{lem.m3f.sum1} with respect to $t$ over $[0,T]$ and taking the square roots of both sides of the resulting inequality, one has
\begin{multline}
\left(\int_0^T\|\Delta_q \na_x (a,b,c)\|_{L^2_x}^2dt\right)^{1/2}\\
\lesssim\sqrt{|\mathcal {E}^{int}_{q}(f(T))|}+C\sqrt{|\mathcal {E}^{int}_{q}(f(0))|}
+\|\Delta_q\na_x\{\FI-\FP\}  f\|_{L^2_TL^2_\xi L^2_x}\\
+\sum_{i=1}^3\left\|\Lambda_i\left(\Delta_q\mathbbm{h}\right)\right\|_{L_T^2L^2_x}
+\sum_{i,m=1}^3\|\highG_{im}(\Delta_q\mathbbm{h})\|_{L_T^2L^2_x}.\label{lem.m3f.sum2}
\end{multline}
Now multiplying \eqref{lem.m3f.sum2} by $2^{q/2}$ and taking the summation over $q\geq-1$ gives
\begin{multline}
\|\na_x (a,b,c)\|_{\widetilde{L}^2_T (B^{1/2}_x)}
\lesssim\sum\limits_{q\geq-1}2^{q/2}\sqrt{|\mathcal {E}^{int}_{q}(f(T))|}+\sum\limits_{q\geq-1}2^{q/2}\sqrt{|\mathcal {E}^{int}_{q}(f(0))|}\\+\|\{\FI-\FP\}f\|_{\widetilde{L}^{2}_T\widetilde{L}^{2}_{\xi,\nu} (B^{3/2}_x)}
+\sum_{i=1}^3\sum\limits_{q\geq-1}2^{q/2}\left\|\Lambda_i\left(\Delta_q\mathbbm{h}\right)\right\|_{L_T^2L^2_x}
\\+\sum_{i,m=1}^3\sum\limits_{q\geq-1}2^{q/2}\|\highG_{im}(\Delta_q\mathbbm{h})\|_{L_T^2L^2_x}.\label{lem.m3f.sum3}
\end{multline}
Furthermore, it follows from \eqref{inenq.def.}, Lemma \ref{B.IM.} and {the} Cauchy-Schwarz inequality that
\begin{multline}\notag%
\sum\limits_{q\geq-1}2^{q/2}\sqrt{|\mathcal {E}^{int}_{q}(f(t))|}
\lesssim
\sum\limits_{q\geq-1}2^{q/2}\{\|\Delta_q\na_x (a,b,c)(t)\|_{L_x^2}+\|\Delta_qb (t)\|_{L_x^2}\\
+\|\Delta_q\na_x\{\FI-\FP\}  f(t)\|_{L^2_\xi L^2_x}\},
\end{multline}
for any $0\leq t\leq T$, which implies
\begin{equation}
\label{macro.eg.}
\sum\limits_{q\geq-1}2^{q/2}\sqrt{|\mathcal {E}^{int}_{q}(f(T))|}\lesssim \mathcal {E}_T(f),\quad \sum\limits_{q\geq-1}2^{q/2}\sqrt{|\mathcal {E}^{int}_{q}(f(0))|}\lesssim\|f_0\|_{\widetilde{L}^{2}_{\xi}(B^{3/2}_x)}.
\end{equation}
And, Lemma \ref{non.EDL2.} and Lemma \ref{upd.L} imply
\begin{multline}\label{marco.non.}
\sum\limits_{q\geq-1}2^{q/2}\left\|\Lambda_i\left(\Delta_q\mathbbm{h}\right)\right\|_{L_T^2L^2_x}
+\sum\limits_{q\geq-1}2^{q/2}\|\highG_{im}(\Delta_q\mathbbm{h})\|_{L_T^2L^2_x}
\\
\lesssim\|\{\FI-\FP\}f\|_{\widetilde{L}^{2}_T \widetilde{L}^{2}_{\xi,\nu} (B^{3/2}_x)}+\mathcal {E}_T (f)\mathcal {D}_T (f).
\end{multline}
Therefore \eqref{macro.dis.p1} follows from \eqref{lem.m3f.sum3} with the help of \eqref{macro.eg.} and \eqref{marco.non.}. This completes the proof of Lemma \ref{macro.dis.}.
\end{proof}

\section{Global a priori estimate}
This section is devoted to deducing the global a priori estimate for the Boltzmann equation \eqref{f.eq.}.

\begin{lemma}\label{basic.eng.}
There is indeed an energy functional $\CE_T (f)$ satisfying \eqref{def.et} such that
\begin{equation}\label{basic.eng1.}
{\mathcal {E}_T }(f)+\mathcal {D}_T (f)\leq {C}\|f_0\|_{\widetilde{L}^{2}_{\xi}(B^{3/2}_x)}+
{C}\left\{\sqrt{\mathcal {E}_T (f)}+\mathcal {E}_T (f)\right\}\mathcal {D}_T (f),
\end{equation}
for any $T>0$, where ${C}$ is a constant independent of $T$.
\end{lemma}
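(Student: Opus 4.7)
The plan is to run the standard microscopic--macroscopic energy--dissipation argument, but at the Littlewood--Paley block level and with the Chemin--Lerner summation convention, so that the block--estimates produced by Lemmas \ref{non.ED.}, \ref{non.EDL2.}, \ref{upd.L}, and the macroscopic estimate of Lemma \ref{macro.dis.} can be stitched together to give \eqref{basic.eng1.}.

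First I would apply $\Delta_q$ (for $q\geq -1$) to the perturbation equation \eqref{f.eq.}, obtaining
\begin{equation*}
\partial_t \Delta_q f + \xi\cdot\nabla_x \Delta_q f + L\Delta_q f = \Delta_q \Gamma(f,f),
\end{equation*}
and then take the $L^2_{x,\xi}$ inner product with $\Delta_q f$. The transport term vanishes by integration by parts in $x$, and the coercivity \eqref{coer} together with the fact that $\Gamma(f,f)\in (\ker L)^\perp$ (so that the test function on the right can be replaced by $\Delta_q\{\FI-\FP\}f$) yields
\begin{equation*}
\tfrac{1}{2}\tfrac{d}{dt}\|\Delta_q f\|^2_{L^2_{x,\xi}} + \lambda_0 \|\Delta_q\{\FI-\FP\}f\|^2_{L^2_{\xi,\nu}L^2_x} \leq \bigl|(\Delta_q\Gamma(f,f),\Delta_q\{\FI-\FP\}f)\bigr|.
\end{equation*}

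Next I would integrate over $[0,t]$, take the supremum in $t\in[0,T]$ on the left (both terms are monotone in $t$), extract a square root using $\sqrt{a+b}\leq\sqrt{a}+\sqrt{b}$, multiply by $2^{3q/2}$, and sum over $q\geq -1$. This produces
\begin{multline*}
\|f\|_{\widetilde{L}^{\infty}_T\widetilde{L}^{2}_{\xi}(B^{3/2}_x)} + \|\{\FI-\FP\}f\|_{\widetilde{L}^{2}_T\widetilde{L}^{2}_{\xi,\nu}(B^{3/2}_x)} \\
\lesssim \|f_0\|_{\widetilde{L}^{2}_{\xi}(B^{3/2}_x)} + \sum_{q\geq -1} 2^{3q/2}\left[\int_0^T \bigl|(\Delta_q\Gamma(f,f),\Delta_q\{\FI-\FP\}f)\bigr|\,dt\right]^{1/2},
\end{multline*}
and Lemma \ref{non.ED.} bounds the last sum by $C\sqrt{\CE_T(f)}\,\CD_T(f)$. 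This recovers $\CE_T(f)$ together with the microscopic piece of $\CD_T(f)$.

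To capture the macroscopic piece of $\CD_T(f)$ I would add to the previous inequality a small multiple $\kappa$ of Lemma \ref{macro.dis.}, namely of
\begin{equation*}
\|\nabla_x(a,b,c)\|_{\widetilde{L}^{2}_T(B^{1/2}_x)} \lesssim \|f_0\|_{\widetilde{L}^{2}_{\xi}(B^{3/2}_x)} + \CE_T(f) + \|\{\FI-\FP\}f\|_{\widetilde{L}^{2}_T\widetilde{L}^{2}_{\xi,\nu}(B^{3/2}_x)} + \CE_T(f)\CD_T(f).
\end{equation*}
Choosing $\kappa>0$ sufficiently small, the $\CE_T(f)$ and $\|\{\FI-\FP\}f\|_{\widetilde{L}^{2}_T\widetilde{L}^{2}_{\xi,\nu}(B^{3/2}_x)}$ contributions coming from the macroscopic estimate can be absorbed into the corresponding (larger) left-hand side terms produced by the microscopic estimate, and recalling $\CD_T(f) \sim \|\nabla_x(a,b,c)\|_{\widetilde{L}^{2}_T(B^{1/2}_x)} + \|\{\FI-\FP\}f\|_{\widetilde{L}^{2}_T\widetilde{L}^{2}_{\xi,\nu}(B^{3/2}_x)}$ yields precisely \eqref{basic.eng1.}, with the cubic term $\CE_T(f)\CD_T(f)$ inherited from Lemma \ref{macro.dis.} and the $\sqrt{\CE_T(f)}\CD_T(f)$ term from Lemma \ref{non.ED.}.

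The main technical obstacle is not the combinatorics above but the trilinear step that has already been isolated in Lemma \ref{non.ED.}: one has to justify passing from the standard $L^2$ energy identity to the sum over $q$ in the form $\sum_q 2^{3q/2}[\int_0^T|(\Delta_q\Gamma,\Delta_q\{\FI-\FP\}f)|\,dt]^{1/2}$, which forces the use of the Chemin--Lerner norm (with the time integral inside the $\ell^1_q$ sum) and the Bony decomposition of $f_\ast g$ handled there. A secondary subtlety is that the macroscopic estimate contains a linear $\CE_T(f)$ term; this is harmless because it is produced from the boundary values of the interactive functional via \eqref{macro.eg.} and enters with the tunable constant $\kappa$, so it does not obstruct closing the estimate.
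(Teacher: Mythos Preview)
Your proposal is correct and follows essentially the same route as the paper: block-wise energy estimate on $\Delta_q f$ using coercivity and $\Gamma(f,f)\perp\ker L$, time-integrate, take square roots, sum in $q$, invoke Lemma \ref{non.ED.} for the trilinear term, and then add a small multiple of Lemma \ref{macro.dis.} to recover the macroscopic dissipation and absorb the linear $\CE_T(f)$ and microscopic terms. The only cosmetic difference is the order in which you take the supremum in $t$ versus the square root, which is immaterial since both integrated quantities are nondecreasing in $t$.
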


\begin{proof}
Applying the operator $\Delta_q$ $(q\geq-1)$ to \eqref{f.eq.}, taking the inner product with $2^{3q}\Delta_q f$ over $\R^3_x\times \R^3_\xi$
and applying Lemma \ref{qL+}, we obtain
\begin{multline}\label{f.inner}
\frac{1}{2}\frac{d}{dt} 2^{3q}\|\Delta_qf\|^2_{L^2_\xi L^2_x}+{\la_{0}} 2^{3q}\|\Delta_q\{\FI-\FP\}f\|_{L^2_{\xi,\nu}L^2_{x}}^{2}\\
\leq2^{3q}|(\Delta_q\Gamma(f,f),\Delta_q\{\FI-\FP\}f)|.
\end{multline}
Integrating \eqref{f.inner}  over $[0,t]$ with $0\leq t\leq T$ and taking the square root of both sides of the resulting inequality
yield
\begin{multline}\notag
2^{\frac{3q}{2}}\|\Delta_qf(t)\|_{L^2_\xi L^2_x}+{\sqrt{\la_{0}}} 2^{\frac{3q}{2}}\left(\int_0^t\|\Delta_q\{\FI-\FP\}f\|_{L^2_{\xi,\nu}L^2_{x}}^{2}d\tau\right)^{1/2}
\\\leq 2^{\frac{3q}{2}}\|\Delta_qf_0\|_{L^2_\xi L^2_x}+2^{\frac{3q}{2}}\left(\int_0^t|(\Delta_q\Gamma(f,f),\Delta_q\{\FI-\FP\}f)|d\tau\right)^{1/2},
\end{multline}
for any $0\leq t\leq T$.
Further by taking the summation over $q\geq -1$, the above estimate implies
\begin{multline}\notag
\sum\limits_{q\geq-1}2^{\frac{3q}{2}}\sup_{0\leq t\leq T}\|\Delta_qf(t)\|_{L^2_\xi L^2_x}\\
+{\sqrt{\la_{0}}}\sum\limits_{q\geq-1}2^{\frac{3q}{2}}\left(\int_0^T \|\Delta_q\{\FI-\FP\}f\|_{L^2_{\xi,\nu}L^2_{x}}^{2}dt \right)^{1/2}
\\ \leq \sum\limits_{q\geq-1}2^{\frac{3q}{2}}\|\Delta_qf_0\|_{L^2_\xi L^2_x}+\sum\limits_{q\geq-1}2^{\frac{3q}{2}}\left(\int_0^T|(\Delta_q\Gamma(f,f),\Delta_q\{\FI-\FP\}f)|dt\right)^{1/2}.
\end{multline}
Due to Lemma \ref{non.ED.} it further follows that
\begin{equation}\label{f.inner4}
\|f\|_{\widetilde{L}^{\infty}_T \widetilde{L}^{2}_{\xi} (B^{3/2}_x)}
+{\sqrt{\la_{0}}}\|\{\FI-\FP\}f\|_{\widetilde{L}^{2}_T \widetilde{L}^{2}_{\xi,\nu}(B^{3/2}_x)}
\lesssim \|f_0\|_{\widetilde{L}^{2}_{\xi}(B^{3/2}_x)}
+\sqrt{\mathcal {E}_T (f)}\mathcal {D}_T (f),
\end{equation}
where $T>0$ can be arbitrary. Furthermore,  we recall Lemma \ref{macro.dis.}. By letting $0<\kappa_3\ll1$, we get from
$\eqref{macro.dis.p1}\times\kappa_3+\eqref{f.inner4}$ that
\begin{multline}\label{f.inner5}
\|f\|_{\widetilde{L}^{\infty}_T \widetilde{L}^{2}_{\xi}{(B^{3/2}_x)}}-\kappa_3\CE_T (f)
+\la \left\{\|\na_x(a,b,c)\|_{\widetilde{L}^{2}_T (B^{1/2}_x)}+\|\{\FI-\FP\}f\|_{\widetilde{L}^{2}_T \widetilde{L}^{2}_{\xi,\nu} (B^{3/2}_x)}\right\}
\\
\lesssim\|f_0\|_{\widetilde{L}^{2}_{\xi}(B^{3/2}_x)}
+\left\{\sqrt{\mathcal {E}_T (f)}+\mathcal {E}_T (f)\right\}\mathcal {D}_T (f).
\end{multline}
Therefore, \eqref{basic.eng1.} follows from \eqref{f.inner5}  by noticing
$$
\|f\|_{\widetilde{L}^{\infty}_T \widetilde{L}^{2}_{\xi}{(B^{3/2}_x)}}-\kappa_3\CE_T (f)\sim \CE_T (f),
$$
since $\ka_3>0$ can be small enough.
The proof of Lemma \ref{basic.eng.} is complete.
\end{proof}

\section{Local existence}

In this section, we will etablish the  local-in-time existence of solutions to the Boltzmann equation \eqref{f.eq.} in the space $\widetilde{L}^{\infty}_T\widetilde{L}^{2}_{\xi}(B^{3/2}_x)$ for $T>0$ small enough. The
construction of the local solution is based on a uniform energy estimate for the following
sequence of iterating approximate solutions:
\begin{eqnarray}\notag
\begin{split}
\left\{\begin{array}{rll}
\begin{split}
\left\{\partial_t+\xi\cdot\nabla_x \right\}F^{n+1}&+F^{n+1}(\xi)\int_{\R^3\times \S^2}|\xi-\xi_\ast|^\ga B_0(\theta)
F^n(\xi_\ast)\,d\xi_\ast d\omega\\&=\int_{\R^3\times \S^2}|\xi-\xi_\ast|^\ga B_0(\theta)
F^n
(\xi'_{\ast})F^n(\xi')\,d\xi_\ast d\omega,\\
F^{n+1}(0,x,\xi)&=F_0(x,\xi),
\end{split}
\end{array}\right.
\end{split}
\end{eqnarray}
starting with $F^0(t,x,\xi)=F_0(x,\xi)$.

Noticing that $F^{n+1}=\mu+\mu^{1/2}f^{n+1}$, equivalently we need to solve $f^{n+1}$ such that
\begin{equation}\label{iterate.f}
\begin{split}
\left\{\partial_t+\xi\cdot\nabla_x +\nu\right\}f^{n+1}-Kf^{n}&=\Gamma_{gain}(f^{n},f^{n})-\Gamma_{loss}(f^{n},f^{n+1}),\\
f^{n+1}(0,x,\xi)&=f_0(x,\xi).
\end{split}
\end{equation}
Our discussion is based on the uniform bound in $n$ for {$\mathcal
{E}_T (f^{n})$} for a small time $T>0$.
The crucial energy estimate is given as follows.

\begin{lemma}\label{fn.bdd.lem}
The solution sequence $\{f^{n}\}_{n=1}^\infty$ is well defined. For  a sufficiently small constant $M_0>0$,
there exists $T^{\ast}=T^{\ast}(M_0)>0$ such that if
$$
\|f_0\|_{\widetilde{L}^{2}_{\xi} (B^{3/2}_x)} \leq{M_0},
$$
then for any $n$, it holds that
\begin{equation}\label{fn.bdd}
\widetilde{Y}_T (f^n):=\mathcal {E}_T (f^{n})+\widetilde{\mathcal {D}}_T (f^{n})\leq 2M_0, \ \ \forall\,T\in[0,T^*),
\end{equation}
where $\widetilde{\mathcal
{D}}_T (f)$ is defined by
\begin{eqnarray*}
\widetilde{\mathcal
{D}}_T (f)=\|f\|_{\widetilde{L}^{2}_T \widetilde{L}^{2}_{\xi,\nu} (B^{3/2}_x)}.
\end{eqnarray*}
\end{lemma}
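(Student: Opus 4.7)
The plan is induction on $n$. Equation (\ref{iterate.f}) is linear in $f^{n+1}$ with strictly positive absorption coefficient $\nu(\xi)+\nu_{f^n}(\xi)$, where $\nu_{f^n}(\xi):=\int|\xi-\xi_*|^\gamma B_0(\theta)\mu^{1/2}(\xi_*)f^n(\xi_*)\,d\xi_*\,d\omega$ is a bounded perturbation of $\nu$ once $\widetilde Y_T(f^n)\leq 2M_0$; thus classical linear theory produces a unique $f^{n+1}$ globally in time. Starting the iteration from $f^0\equiv 0$, the explicit solution $f^1(t,x,\xi)=e^{-\nu(\xi)t}f_0(x-\xi t,\xi)$ of the transport-absorption equation yields $\widetilde Y_T(f^1)\lesssim\|f_0\|_{\widetilde L^2_\xi(B^{3/2}_x)}\leq M_0$ via $\int_0^T\nu(\xi)e^{-2\nu(\xi)t}\,dt\leq\tfrac12$, settling the base case.

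For the inductive step, assume $\widetilde Y_T(f^n)\leq 2M_0$. Apply $\Delta_q$ to (\ref{iterate.f}) and take the $L^2_{x,\xi}$ inner product with $\Delta_q f^{n+1}$. The transport term vanishes by integration by parts and the $\nu$ term supplies the dissipation, yielding
\begin{equation*}
\tfrac12\tfrac{d}{dt}\|\Delta_q f^{n+1}\|_{L^2_{x,\xi}}^2+\|\Delta_q f^{n+1}\|^2_{L^2_{\xi,\nu}L^2_x}\leq|(\Delta_q Kf^n,\Delta_q f^{n+1})|+|(\Delta_q\Gamma_{gain}(f^n,f^n),\Delta_q f^{n+1})|+|(\Delta_q\Gamma_{loss}(f^n,f^{n+1}),\Delta_q f^{n+1})|.
\end{equation*}
Integrating on $[0,t]$, taking the square root (via $\sqrt{a+b}\leq\sqrt a+\sqrt b$), then taking the supremum in $t\leq T$ of the pointwise piece and keeping the $L^2_T$ piece intact, multiplying by $2^{3q/2}$, and summing over $q\geq-1$ produces
\begin{equation*}
\mathcal E_T(f^{n+1})+\widetilde{\mathcal D}_T(f^{n+1})\lesssim\|f_0\|_{\widetilde L^2_\xi(B^{3/2}_x)}+R_K+R_{\Gamma},
\end{equation*}
where $R_K$ and $R_{\Gamma}$ are the dyadic sums generated by the three right-hand terms.

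For $R_K$, commute $\Delta_q$ past $K$ (they act on independent variables), invoke the classical $L^2_\xi$-boundedness of $K$, apply Cauchy-Schwarz in $(t,x,\xi)$, and factor out $\sqrt T$ from $\|\Delta_q f^n\|_{L^2_TL^2_{x,\xi}}\leq\sqrt T\|\Delta_q f^n\|_{L^\infty_TL^2_{x,\xi}}$; a Cauchy-Schwarz on the dyadic index then gives $R_K\lesssim T^{1/4}\mathcal E_T(f^n)^{1/2}\widetilde{\mathcal D}_T(f^{n+1})^{1/2}$. For $R_{\Gamma}$, split $\Gamma=\Gamma_{gain}-\Gamma_{loss}$ and apply Lemma \ref{nh-h.non.es.} with $s=3/2$ and $\FX=B^{3/2}_x$ to each piece separately: the gain contribution (with $f=g=f^n$, $h=f^{n+1}$) is bounded by $\widetilde Y_T(f^{n+1})^{1/2}\widetilde Y_T(f^n)$, while each of the four bracket terms for the loss (with $f=f^n$, $g=h=f^{n+1}$) is bounded by $\widetilde Y_T(f^{n+1})\widetilde Y_T(f^n)^{1/2}$. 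Using $\widetilde Y_T(f^n)\leq 2M_0$ and Young's inequality to absorb the half-power factors into the left-hand side gives
\begin{equation*}
\widetilde Y_T(f^{n+1})\leq C_1 M_0+C_2\bigl(\sqrt T+\sqrt{M_0}\bigr)\widetilde Y_T(f^{n+1})+C_3 M_0^2.
\end{equation*}
Choosing first $M_0$ so small that $C_2\sqrt{M_0}\leq\tfrac14$ and $4(C_1+C_3 M_0)\leq 2$, then $T^*=T^*(M_0)$ so small that $C_2\sqrt{T^*}\leq\tfrac14$, absorbs the middle term and closes the induction with $\widetilde Y_T(f^{n+1})\leq 2M_0$ for all $T\in[0,T^*)$.

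The principal obstacle is the loss contribution: the trilinear bracket of Lemma \ref{nh-h.non.es.} with $h=g=f^{n+1}$ unavoidably produces a term carrying $\widetilde Y_T(f^{n+1})$ to the \emph{first} power rather than only to its square root (which is what the gain and $K$ pieces give). Time-smallness alone cannot control this, so the smallness of $M_0$ is essential to absorb it into the left-hand side. A secondary bookkeeping point is that although (\ref{iterate.f}) couples $\Gamma_{loss}$ to the unknown $f^{n+1}$, the gain/loss splitting performed inside the proof of Lemma \ref{basic.non.es.} applies unchanged, so one may invoke Lemma \ref{nh-h.non.es.} on each piece with different second and third arguments.
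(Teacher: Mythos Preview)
Your proof is correct and follows essentially the same strategy as the paper: induction on $n$, dyadic energy estimate on $\Delta_q f^{n+1}$, the trilinear Lemma~\ref{nh-h.non.es.} for the gain and loss pieces, the $L^2$-boundedness of $K$ for $R_K$, and closure via smallness of $M_0$ and $T^\ast$. Your identification of the loss term as the one producing a full power of $\widetilde Y_T(f^{n+1})$ (requiring $\sqrt{M_0}$-smallness rather than only $\sqrt{T}$-smallness) matches the paper's estimate \eqref{fn.eng2.} exactly. The only cosmetic differences are that the paper initializes with $f^0=f_0$ rather than $f^0\equiv 0$, and bounds the $K$-contribution by $\sqrt{T}\bigl(\mathcal E_T(f^n)+\mathcal E_T(f^{n+1})\bigr)$ via \eqref{fn.eng2.rh.5} instead of your $T^{1/4}\mathcal E_T(f^n)^{1/2}\widetilde{\mathcal D}_T(f^{n+1})^{1/2}$; both routes close the induction in the same way.
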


\begin{proof}
To prove \eqref{fn.bdd}, we use induction on $n$. Namely, for each integer $l\geq0$, we are going to verify:
\begin{equation}\label{yl.bdd}
\widetilde{Y}_T(f^l)\leq 2M_0,
\end{equation}
for $0\leq T<T^\ast$, where $M_0$ and $T^\ast>0$ are to be suitably chosen later on.
Clearly the case $l=0$ is valid. We assume (\ref{yl.bdd}) is true for $l=n$.
Applying $\Delta_q$ $(q\geq-1)$ to \eqref{iterate.f} and taking the inner product with $2^{3q}\Delta_qf^{n+1}$ over $\R^3_x\times\R^3_\xi$, one has
\begin{multline}\label{D.fn.inner}
\frac{d}{dt}2^{3q}\left\|\Delta_qf^{n+1}\right\|^2_{L^2_\xi L^2_x}+2^{3q+1}\left\|\Delta_qf^{n+1}\right\|_{L^2_{\xi,\nu}L^2_{x}}^{2}\\
=2^{3q+1} \Big(\Delta_q\Gamma_{gain}(f^{n},f^{n})- \Delta_q\Gamma_{loss}(f^{n},f^{n+1}) +K\Delta_qf^n, \Delta_qf^{n+1}\Big),
\end{multline}
which further implies
\begin{multline}\label{fn.inner}
\frac{d}{dt}2^{3q}\left\|\Delta_qf^{n+1}\right\|^2_{L^2_\xi L^2_x}+2^{3q+1}\left\|\Delta_qf^{n+1}\right\|_{L^2_{\xi,\nu}L^2_{x}}^{2}\\
\leq2^{3q+1}\left|\left(\Delta_q\Gamma_{gain}(f^{n},f^{n}),\Delta_qf^{n+1}\right)\right|\\
+2^{3q+1}\left|\left(\Delta_q\Gamma_{loss}(f^{n},f^{n+1}),\Delta_qf^{n+1}\right)\right|
+2^{3q+1}\left|\left(K\Delta_qf^n, \Delta_qf^{n+1}\right)\right|.
\end{multline}
Now by integrating \eqref{fn.inner} with respect to the time variable over $[0,t]$ with $0\leq t\leq T$, taking the square root of both sides of the resulting inequality
and summing up over $q\geq -1$,
one has
\begin{equation}\label{fn.eng1.}
\begin{split}
\sum\limits_{q\geq-1}&2^{\frac{3q}{2}}\sup_{0\leq t\leq T}\left\|\Delta_qf^{n+1}(t)\right\|_{L^2_\xi L^2_x}+\sum\limits_{q\geq-1}2^{\frac{3q+1}{2}}\left(\int_0^T\left\|\Delta_qf^{n+1}\right\|_{L^2_{\xi,\nu}L^2_{x}}^{2}dt\right)^{1/2}\\[2mm]
\leq&\sum\limits_{q\geq-1}2^{\frac{3q}{2}}\left\|\Delta_qf_0\right\|_{L^2_\xi L^2_x}
+\sum\limits_{q\geq-1}2^{\frac{3q+1}{2}}\left(\int_0^T\left|\left(\Delta_q\Gamma_{gain}(f^{n},f^{n}),\Delta_qf^{n+1}\right)\right|dt\right)^{1/2}
\\&+\sum\limits_{q\geq-1}2^{\frac{3q+1}{2}}\left(\int_0^T\left|\left(\Delta_q\Gamma_{loss}(f^{n},f^{n+1}),\Delta_qf^{n+1}\right)\right|dt\right)^{1/2}
\\&+\sum\limits_{q\geq-1}2^{\frac{3q+1}{2}}\left(\int_0^T\left|\left(K\Delta_qf^n, \Delta_qf^{n+1}\right)\right|dt\right)^{1/2}.
\end{split}
\end{equation}
From Lemma \ref{nh-h.non.es.} and Lemma \ref{K.es.},  \eqref{fn.eng1.}
implies
\begin{equation}\label{fn.eng2.}
\begin{split}
&\mathcal {E}_T (f^{n+1})+\widetilde{\mathcal {D}}_T (f^{n+1})\\
&\leq\|f_0\|_{\widetilde{L}^{2}_{\xi} (B^{3/2}_x)}
+C\sqrt{\mathcal {E}_T (f^{n})}\sqrt{\widetilde{\mathcal {D}}_T (f^{n})}\sqrt{\widetilde{\mathcal {D}}_T (f^{n+1})}
\\&\quad +C\sqrt{\mathcal {E}_T (f^{n})}\widetilde{\mathcal {D}}_T (f^{n+1})
+C\sqrt{\mathcal {E}_T (f^{n+1})}\sqrt{\widetilde{\mathcal {D}}_T (f^{n})}\sqrt{\widetilde{\mathcal {D}}_T (f^{n+1})}\\
&\quad +\sum\limits_{q\geq-1}2^{\frac{3q+1}{2}}
\left(\int_0^T\|\Delta_qf^n\|_{L^2_{\xi}L^2_x}\|\Delta_qf^{n+1}\|_{L^2_{\xi}L^2_x}dt\right)^{1/2}.
\end{split}
\end{equation}
The last term on the right hand side of \eqref{fn.eng2.} can be bounded by
\begin{equation}\label{fn.eng2.rh.5}
\begin{split}
&\sqrt{T}\sum\limits_{q\geq-1}2^{\frac{3q+1}{2}}
\left(\sup\limits_{0\leq t\leq T}\|\Delta_qf^n\|_{L^2_{\xi}L^2_x}\sup\limits_{0\leq t\leq T}\|\Delta_qf^{n+1}\|_{L^2_{\xi}L^2_x} \right)^{1/2}
\\ & \lesssim \sqrt{T}
\left(\sum\limits_{q\geq-1}2^{\frac{3q}{2}}\sup\limits_{0\leq t\leq T}\|\Delta_qf^n\|_{L^2_{\xi}L^2_x}\right)^{1/2}
\left(\sum\limits_{q\geq-1}2^{\frac{3q}{2}}\sup\limits_{0\leq t\leq T}\|\Delta_qf^{n+1}\|_{L^2_{\xi}L^2_x}\right)^{1/2}\\
& \lesssim \sqrt{T}\mathcal {E}_T (f^{n})+\sqrt{T}\mathcal {E}_T (f^{n+1}),
\end{split}
\end{equation}
where in the second line the discrete version of {the} Cauchy-Schwarz  inequality has been used. Using {the} Cauchy-Schwarz inequality,  the second and fourth terms on the right hand side of \eqref{fn.eng2.}  can be dominated by
\begin{equation}\label{fn.eng2.rh.2-4}
\begin{split}
\eta\widetilde{\mathcal {D}}_T (f^{n+1})
+\frac{C}{\eta}\mathcal {E}_T (f^{n})\widetilde{\mathcal {D}}_T (f^{n})
+C\sqrt{\widetilde{\mathcal {D}}_T (f^{n})}\left\{\mathcal {E}_T (f^{n+1})+\widetilde{\mathcal {D}}_T (f^{n+1})\right\},
\end{split}
\end{equation}
where $\eta$ is an arbitrary small positive constant.
By substituting \eqref{fn.eng2.rh.5} and \eqref{fn.eng2.rh.2-4} into \eqref{fn.eng2.}, applying the inductive hypothesis
and recalling $0\leq T<T^*$, it follows
\begin{multline}\notag
\left(1-C\sqrt{T^*}-C\sqrt{M_0}\right)\mathcal {E}_T (f^{n+1})+\left(1-\eta-2C\sqrt{M_0}\right)\widetilde{\mathcal {D}}_T (f^{n+1})\\
\leq M_0
+C\sqrt{T^*}M_0+\frac{C}{\eta}M_0^2.
\end{multline}
This then implies \eqref{yl.bdd} for $l=n+1$, since $\eta>0$ can be  small enough and both $T^*>0$ and $M_0>0$ are chosen to be  suitably small. The proof of Lemma \ref{fn.bdd.lem} is therefore complete.
\end{proof}

With the uniform bound on  the iterative solution sequence in terms of \eqref{iterate.f} by Lemma \ref{fn.bdd.lem}, we
can give the proof of  the local existence of solutions in the following theorem. We remark that the approach used here is due  to
Guo {\cite{Guo-L}}.

\begin{theorem}\label{local.existence}
Assume $0\leq\ga\leq1$. For a sufficiently small $M_0>0$, there exists $T^{\ast}=T^{\ast}(M_0)>0$ such that if
$$
\|f_0\|_{\widetilde{L}^{2}_{\xi} (B^{3/2}_x)} \leq{M_0},
$$
then there is a unique strong solution $f(t,x,\xi)$ to the Boltzmann equation (\ref{f.eq.})
in $(0,T^{\ast})\times {\R}^3_x\times {\R}^3_\xi$ with initial data $f(0,x,\xi)=f_0(x,\xi)$, such that
\begin{equation}\notag
\widetilde{Y}_T (f)\leq 2M_0,
\end{equation}
for any $T\in [0,T^\ast)$,
where $\widetilde{Y}_T (f)$ is defined in \eqref{fn.bdd}.
Moreover $\widetilde{Y}_T (f)$ is continuous in $T $ over $[0,T^*)$, and if $
F_0(x,\xi)=\mu+\mu^{1/2}f_{0}\geq0$, then
$F(t,x,\xi)=\mu+\mu^{1/2}f(t,x,\xi)\geq 0$ holds true.
\end{theorem}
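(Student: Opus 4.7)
The plan is to combine the uniform iterative bound from Lemma \ref{fn.bdd.lem} with a contraction-type argument in a weaker norm to extract a limit, and then check separately uniqueness, continuity in $T$, and positivity.

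First I would show that $\{f^n\}_{n\geq 0}$ is Cauchy in a norm strong enough to pass to the nonlinear limit but weak enough that the estimate closes without loss. Write $g^{n+1}=f^{n+1}-f^n$ and subtract the equations in \eqref{iterate.f}, which yields
\begin{equation*}
\{\partial_t+\xi\cdot\nabla_x+\nu\}g^{n+1}-Kg^n=\Gamma_{gain}(f^n,g^n)+\Gamma_{gain}(g^n,f^{n-1})-\Gamma_{loss}(f^n,g^{n+1})-\Gamma_{loss}(g^n,f^n).
\end{equation*}
Applying $\De_q$, taking the $L^2_{x,\xi}$ inner product with $2^{3q}\De_q g^{n+1}$, integrating in time, taking a square root and summing over $q\ge-1$ exactly as in \eqref{fn.eng1.}, the right-hand side is controlled by Lemma \ref{nh-h.non.es.} together with the $K$-estimate from Lemma \ref{K.es.} in terms of $\widetilde Y_T(f^n)$, $\widetilde Y_T(f^{n-1})$, $\widetilde Y_T(g^n)$ and $\widetilde Y_T(g^{n+1})$. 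Using the uniform bound $\widetilde Y_T(f^n)\le 2M_0$ and shrinking $T^*$ (and $M_0$) if necessary, one absorbs the $g^{n+1}$ contribution into the left and extracts a factor $\theta\in(0,1)$ in front of $\widetilde Y_T(g^n)$, so $\{f^n\}$ is Cauchy in $\widetilde L^\infty_T\widetilde L^2_\xi(B_x^{3/2})\cap \widetilde L^2_T\widetilde L^2_{\xi,\nu}(B_x^{3/2})$. The limit $f$ solves \eqref{f.eq.}, and by lower semicontinuity of the norm $\widetilde Y_T(f)\le\liminf_n \widetilde Y_T(f^n)\le 2M_0$.

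Uniqueness is the same energy argument applied to the difference of two solutions $f$ and $\tilde f$ sharing the same data: the resulting estimate has the form
\begin{equation*}
\widetilde Y_T(f-\tilde f)\le C(\sqrt{M_0}+\sqrt{T^*})\,\widetilde Y_T(f-\tilde f),
\end{equation*}
which forces $f=\tilde f$ after possibly shrinking $T^*$. For the continuity of $T\mapsto \widetilde Y_T(f)$, as indicated before the statement, the nontrivial part is continuity of
$$
\Phi(t):=\sum_{q\ge -1}2^{3q/2}\|\Delta_q f(t,\cdot,\cdot)\|_{L^2_{x,\xi}}.
$$
From \eqref{f.eq.} and the $L^2_{x,\xi}$ energy identity for each $\De_q f$ (identical to \eqref{f.inner} with $f^{n+1}=f^n=f$) one obtains, for every $q$, that $t\mapsto\|\De_q f(t)\|_{L^2_{x,\xi}}$ is continuous, with time derivative controlled by $\|\De_q\{\FI-\FP\}f\|_{L^2_{\xi,\nu}L^2_x}$ and by $|(\De_q\Gamma(f,f),\De_q f)|^{1/2}$. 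The finiteness of $\widetilde{\CD}_T(f)$ together with the trilinear estimate of Lemma \ref{basic.non.es.} yields a summable dominating bound on the oscillation of $2^{3q/2}\|\De_q f(t+h)\|_{L^2_{x,\xi}}-2^{3q/2}\|\De_q f(t)\|_{L^2_{x,\xi}}$ as $h\to 0$, so dominated convergence on the sum over $q$ gives continuity of $\Phi$, and hence of $T\mapsto \widetilde Y_T(f)$.

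Finally, for positivity, I would work at the level of the iterates $F^n=\mu+\mu^{1/2}f^n$. The scheme \eqref{iterate.f} can be recast as
\begin{equation*}
\{\partial_t+\xi\cdot\nabla_x+R^n(t,x,\xi)\}F^{n+1}=Q_{gain}(F^n,F^n),\qquad F^{n+1}|_{t=0}=F_0,
\end{equation*}
where $R^n(t,x,\xi)=\int_{\R^3\times\S^2}|\xi-\xi_\ast|^\gamma B_0(\theta)F^n(\xi_\ast)\,d\xi_\ast d\omega\ge 0$ whenever $F^n\ge 0$. Integration along the characteristics $X(s)=x-(t-s)\xi$ then gives the Duhamel formula $F^{n+1}=e^{-\int R^n}F_0+\int_0^t e^{-\int R^n}Q_{gain}(F^n,F^n)\,ds$. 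Starting from $F^0=F_0\ge 0$, both terms are nonnegative, so $F^n\ge 0$ for all $n$ by induction; since the convergence established above is strong enough to imply (after extracting a subsequence) pointwise a.e.\ convergence, the limit satisfies $F=\mu+\mu^{1/2}f\ge 0$. The main obstacle I expect is the continuity step: naively $\Phi$ is only an $\ell^1$-sum of continuous functions, and the uniform summable majorant must really be produced by combining the dissipation bound with the trilinear estimate, rather than by any pointwise control of individual dyadic blocks.
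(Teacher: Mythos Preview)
Your proposal is correct and follows essentially the same route as the paper: uniqueness by the energy estimate on the difference of two solutions, continuity of $T\mapsto\widetilde Y_T(f)$ via the continuity of $\Phi(t)=\sum_q 2^{3q/2}\|\Delta_q f(t)\|_{L^2_{x,\xi}}$ controlled through the dissipation norm, and positivity by induction on the iterates $F^n$ using the Duhamel representation. The only notable difference is that you spell out a genuine contraction argument for $\{f^n\}$ in the strong norm $\widetilde L^\infty_T\widetilde L^2_\xi(B^{3/2}_x)\cap\widetilde L^2_T\widetilde L^2_{\xi,\nu}(B^{3/2}_x)$, whereas the paper simply asserts that the uniformly bounded sequence has a limit solving \eqref{f.eq.} in the sense of distributions and then upgrades to a strong solution via uniqueness; your version is more explicit here, and similarly your positivity argument gives the details that the paper delegates to a citation.
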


\begin{proof}
In terms of \eqref{fn.bdd}, the limit function $f(t,x,\xi)$ of the approximate solution sequence $\{f^n\}_{n=1}^\infty$ must be the solution to \eqref{f.eq.} with $f(0,x,\xi)=f_0(x,\xi)$ in the sense of distribution. The distribution solution turns out to be a strong solution because
it  can be shown to be unique as follows.

To prove the uniqueness, we assume that another solution $g$ with the same initial data with $f$, i.e.~$g(0,x,\xi)=f_0(x,\xi)$, exists such that
$$
\widetilde{Y}_T (g)\leq 2M_0,
$$
on $T\in[0,T^*)$.  Taking the difference of the Boltzmann equation  \eqref{f.eq.} for $f$ and $g$, one has
\begin{equation*}
[\pa_t+\xi\cdot\na_x](f-g)+\nu(f-g)=\Gamma(f-g,f)+\Gamma(g,f-g)+K(f-g).
\end{equation*}
Then, by performing the completely same energy estimate as for obtaining \eqref{fn.eng2.}, it follows that
\begin{equation*}\label{fg.eng}
\begin{split}
&\widetilde{Y}_T (f-g)\\
&\lesssim 
{\sqrt{\mathcal {E}_T (f)+\mathcal {E}_T (g)}}\widetilde{\mathcal {D}}_T (f-g)\\
&\quad+\sqrt{\mathcal {E}_T (f-g)}{\sqrt{\widetilde{\mathcal {D}}_T (f)+\widetilde{\mathcal {D}}_T (g)}}\sqrt{\widetilde{\mathcal {D}}_T (f-g)}\\
&\quad+\sum\limits_{q\geq-1}2^{\frac{3q+1}{2}}
\left(\int_0^T\|\Delta_q(f-g)\|_{L^2_{\xi}L^2_x}\|\Delta_q(f-g)\|_{L^2_{\xi}L^2_x}dt\right)^{1/2}\\
&\lesssim
{\sqrt{\mathcal {E}_T (f)+\mathcal {E}_T (g)}}\widetilde{\mathcal {D}}_T (f-g)+\sqrt{\mathcal {E}_T (f-g)}{\sqrt{\widetilde{\mathcal {D}}_T (f)+\widetilde{\mathcal {D}}_T (g)}}\sqrt{\widetilde{\mathcal {D}}_T (f-g)}\\[2mm]
&\quad+\sqrt{T}\mathcal {E}_T (f-g)+\sqrt{T}\mathcal {E}_T (f-g).
\end{split}
\end{equation*}
We  therefore deduce $f\equiv g$
by letting $T< T^*$, because $\widetilde{Y}_T (f)\leq 2M_0$, $\widetilde{Y}_T (g)\leq 2M_0$, and $M_0$ and $T^\ast$ can be chosen suitably small.

To prove $T\mapsto \widetilde{Y}_T(f)$ is continuous in $[0,T^\ast)$, we first show that
\begin{equation}
\label{D.def.eft}
t\mapsto \CE(f(t)):=\sum_{q\geq -1} 2^{\frac{3q}{2}} \|\Delta_qf(t)\|_{L^2_{x,\xi}}
\end{equation}
is continuous on $[0,T^\ast)$. Indeed, take $t_1,t_2$ with $0\leq t_1,t_2<T^\ast$ and we let $t_1<t_2$ for brevity of presentation. By
letting $f^{n+1}=f^{n}=f$ in \eqref{D.fn.inner},
integrating the resulting inequality with respect to the time variable over $[t_1,t_2]$,
taking the square root of both sides
and then summing up over $q\geq -1$, similar to obtain \eqref{fn.eng2.}, one has
\begin{equation}
\notag%
\left|\CE(f(t_2))-\CE(f(t_1))\right|
\lesssim (\sqrt{M_0}+1) \sum\limits_{q\geq-1}2^{\frac{3q}{2}}\left(\int_{t_1}^{t_2}\|\Delta_qf\|^2_{L^2_{\xi,\nu}L^2_x}dt\right)^{1/2}.
\end{equation}
With this, it suffices to prove
\begin{equation}
\label{D.lim}
\lim\limits_{t_2\to t_1}\sum\limits_{q\geq-1}2^{\frac{3q}{2}}\left(\int_{t_1}^{t_2}\|\Delta_qf\|^2_{L^2_{\xi,\nu}L^2_x}dt\right)^{1/2}=0.
\end{equation}
Take $\eps>0$. Since $\sum_{q\geq-1}2^{\frac{3q}{2}}\left(\int_{0}^{T}\|\Delta_qf\|^2_{L^2_{\xi,\nu}L^2_x}dt\right)^{1/2}$ is finite for a fixed time $T$ with $\max\{t_1,t_2\}<T<T^\ast$, there is an integer $N$ such that
\begin{equation}
\notag%
\sum\limits_{q\geq N+1}2^{\frac{3q}{2}}\left(\int_{t_1}^{t_2}\|\Delta_qf\|^2_{L^2_{\xi,\nu}L^2_x}dt\right)^{1/2}\leq \sum\limits_{q\geq N+1}2^{\frac{3q}{2}}\left(\int_{0}^{T}\|\Delta_qf\|^2_{L^2_{\xi,\nu}L^2_x}dt\right)^{1/2}< \frac{\eps}{2}.
\end{equation}
On the other hand, it is straightforward to see
\begin{equation}
\notag%
\lim\limits_{t_2\to t_1}\sum\limits_{-1\leq q\leq N}2^{\frac{3q}{2}}\left(\int_{t_1}^{t_2}\|\Delta_qf\|^2_{L^2_{\xi,\nu}L^2_x}dt\right)^{1/2}=0,
\end{equation}
which implies that there is $\delta>0$ such that
\begin{equation}
\notag%
\sum\limits_{-1\leq q\leq N}2^{\frac{3q}{2}}\left(\int_{t_1}^{t_2}\|\Delta_qf\|^2_{L^2_{\xi,\nu}L^2_x}dt\right)^{1/2}< \frac{\eps}{2},
\end{equation}
whenever $|t_2-t_1|<\delta$. Hence, for $|t_2-t_1|<\delta$,
\begin{equation}\notag%
\sum\limits_{q\geq-1}2^{\frac{3q}{2}}\left(\int_{t_1}^{t_2}\|\Delta_qf\|^2_{L^2_{\xi,\nu}L^2_x}dt\right)^{1/2} = \left(\sum_{-1\leq q\leq N}+\sum_{q\geq N+1}\right)\cdots<\frac{\eps}{2}+\frac{\eps}{2}=\eps.
\end{equation}
Then \eqref{D.lim} is proved, and thus $t\mapsto \CE(f(t))$ is continuous on $[0,T^\ast)$. In particular, $\CE(f(t))$ given in \eqref{D.def.eft} is well defined for each $t\in [0,T^\ast)$. Notice that the same proof also yields that for each $q\geq -1$, the function $t\mapsto \|\Delta_qf(t)\|_{L^2_{x,\xi}}$ is continuous on $[0,T^\ast)$.

We now show that $T\mapsto \widetilde{Y}_T(f)$ is continuous in $[0,T^\ast)$. Indeed,  take $T_1,T_2$ with $0\leq T_1<T_2<T^\ast$.  Recall \eqref{fn.bdd}. Notice that $\widetilde{Y}_T(f)$ is nondecreasing in $T$. Then,
\begin{eqnarray*}
0&\leq &\widetilde{Y}_{T_2}(f)-\widetilde{Y}_{T_1}(f)\\
&=&\left(\|f\|_{\widetilde{L}^{\infty}_{T_2} \widetilde{L}^{2}_{\xi}{(B^{3/2}_x)}}-\|f\|_{\widetilde{L}^{\infty}_{T_1} \widetilde{L}^{2}_{\xi}{(B^{3/2}_x)}}\right)
+\left(\|f\|_{\widetilde{L}^{2}_{T_2} \widetilde{L}^{2}_{\xi,\nu} {(B^{3/2}_x)}}-\|f\|_{\widetilde{L}^{2}_{T_1} \widetilde{L}^{2}_{\xi,\nu} {(B^{3/2}_x)}}\right)\\
&\leq &\sum_{q\geq -1} 2^{\frac{3q}{2}} \left(\sup_{T_1\leq t\leq T_2}\|\Delta_qf (t)\|_{L^2_{x,\xi}}-\|\Delta_qf (T_1)\|_{L^2_{x,\xi}}\right)\\
&&+\sum_{q\geq -1} 2^{\frac{3q}{2}} \left(\int_{T_1}^{T_2}\|\Delta_qf(t)\|^2_{L^2_{\xi,\nu}L^2_x}dt\right)^{1/2}.
\end{eqnarray*}
Here, the second summation on the right tends to zero as $T_2\to T_1$ by \eqref{D.lim}, and in the completely same way to prove \eqref{D.lim}, one can see the first summation on the right also  tends to zero as $T_2\to T_1$, since $t\mapsto \|\Delta_qf(t)\|_{L^2_{x,\xi}}$ is continuous and $\|f\|_{\widetilde{L}^{\infty}_{T} \widetilde{L}^{2}_{\xi}(B^{3/2}_x)}$ is finite for some fixed time $T$ with $T_2<T<T^\ast$.
%

Finally, by using the iteration from \eqref{iterate.f}, the proof of the positivity is quite standard, for instance see \cite{Guo-BE-s}. We now finish the proof of Lemma \ref{local.existence}.
\end{proof}

\section{Proof of global Existence}
In this section, we prove the main result Theorem \ref{main.th.}  for the global existence of solutions to the Boltzmann equation \eqref{f.eq.} with initial data $f(0,x,\xi)=f_0(x,\xi)$. The approach is based on the local existence result Theorem \ref{local.existence} as well as the standard continuity argument.

\begin{proof}[Proof of Theorem \ref{main.th.}]
Recall \eqref{def.et} and \eqref{def.dt}.
Define
$$
{Y}_T(f)={\mathcal {E}}_T(f)+\mathcal {D}_T(f).
$$
Let us redefine the constant $C$ on the right of \eqref{basic.eng1.} to be $C_1\geq 1$, and choose $M_1>0$ such that
$$
C_1(\sqrt{M_1}+M_1)\leq \frac{1}{2}.
$$
Set
$$
M=\min\{M_{1}, M_{0}\}.
$$
Let initial data $f_0$ be chosen such that
\begin{equation}
\notag%
\|f_0\|_{\widetilde{L}^{2}_{\xi} B^{3/2}_x}\leq \frac{M}{4C_1}\leq \frac{M_0}{2}.
\end{equation}
Define
$$
\widetilde{T}=\sup\{T:{Y}_T(f)\leq M\}.
$$
By Theorem \ref{local.existence}, $\widetilde{T}>0$ holds true, because the solution $f$ exists locally in time and $T\mapsto Y_T(f)$ is continuous by the same proof as for $\widetilde{Y}_T(f)$. Moreover, Lemma \ref{basic.eng1.} gives that for $0\leq T\leq \widetilde{T}$,
\begin{equation}
\notag%
Y_T(f)\leq C_1\|f_0\|_{\widetilde{L}^{2}_{\xi} {(B^{3/2}_x)}}+C_1(\sqrt{M_1}+M_1) Y_T (f).
\end{equation}
That is, for $0\leq T\leq \widetilde{T}$,
\begin{equation}
\notag%
Y_T(f)\leq 2C_1\|f_0\|_{\widetilde{L}^{2}_{\xi} (B^{3/2}_x)}\leq \frac{M}{2}<M.
\end{equation}
This implies $\widetilde{T}=\infty$. The global existence and uniqueness are then proved. The proof of Theorem \ref{main.th.} is complete.
%
%
%
\end{proof}

\section{Appendix}\setcounter{equation}{0}
In this appendix, we will state some basic estimates related to the Botlzmann equation and Besov space. First we point out the coercivity property \eqref{coer} of the linearized collision operator $L$ implies
%

\begin{lemma}\label{qL+}
Let $(\cdot,\cdot)$ be the inner product on $L^2_{x,\xi}$. It holds that
\begin{equation*}
(\Delta_qLf, \Delta_qf)\geq\la_0\|\{\FI-\FP\}\Delta_qf\|_{L^2_{\xi,\nu}L^2_{x}}^{2},
\end{equation*}
for each $q\geq -1$.
Moreover, for $s\in \R$, it holds that
\begin{equation}\notag
\begin{split}
\sum\limits_{q\geq-1}2^{qs}\left(\int_0^T(\Delta_qLf, \Delta_qf)\,dt\right)^{1/2}
\geq\sqrt{\la_0}\|\{\FI-\FP\}f\|_{\widetilde{L}^{2}_T\widetilde{L}^{2}_{\xi,\nu}(B^{s}_x)},
\end{split}
\end{equation}
for any $T\geq 0$.
\end{lemma}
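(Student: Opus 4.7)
The plan is to reduce the pointwise (in $q$) inequality directly to the coercivity estimate \eqref{coer} of $L$, and then pass to the Chemin-Lerner norm by integrating in time, taking square roots, and summing dyadically. The crucial observation is a commutation: the Littlewood-Paley operator $\Delta_q$ acts only on the $x$ variable, whereas both $L$ and the macroscopic projection $\FP$ act only on the $\xi$ variable (the latter through $L^2_\xi$-inner products against the five elements spanning $\ker L$, with coefficients that are linear functionals of $f$ in $\xi$). Hence $\Delta_q L f = L \Delta_q f$ and $\Delta_q \FP f = \FP \Delta_q f$, so that $\Delta_q \{\FI-\FP\} f = \{\FI-\FP\} \Delta_q f$.

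With this in hand, for the first inequality I would apply \eqref{coer} to the function $\Delta_q f(t,x,\cdot)$ at each fixed $(t,x)$, obtaining
\begin{equation*}
\int_{\R^3} \Delta_q f \cdot L \Delta_q f \, d\xi \;\geq\; \la_0 \int_{\R^3} \nu(\xi)\, |\{\FI-\FP\}\Delta_q f|^2 \, d\xi,
\end{equation*}
and then integrate in $x$ over $\R^3$. Using the commutation and the definition of the $L^2_\xi L^2_x$ inner product, the left-hand side is exactly $(\Delta_q L f, \Delta_q f)$, and the right-hand side is $\la_0 \|\{\FI-\FP\}\Delta_q f\|^2_{L^2_{\xi,\nu} L^2_x}$, which proves the pointwise estimate.

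For the second inequality, I would integrate the pointwise bound in $t$ over $[0,T]$, yielding
\begin{equation*}
\int_0^T (\Delta_q L f, \Delta_q f)\, dt \;\geq\; \la_0 \|\{\FI-\FP\}\Delta_q f\|^2_{L^2_T L^2_{\xi,\nu} L^2_x},
\end{equation*}
then take square roots, multiply by $2^{qs}$, and sum over $q \geq -1$. Since $\sqrt{\cdot}$ and multiplication by $2^{qs}$ are monotone on nonnegative quantities, and the sum on the right reassembles the Chemin-Lerner norm $\sqrt{\la_0}\,\|\{\FI-\FP\} f\|_{\widetilde{L}^{2}_T\widetilde{L}^{2}_{\xi,\nu}(B^{s}_x)}$ by definition (again using that $\Delta_q$ commutes with $\{\FI-\FP\}$), the estimate follows.

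The argument is essentially bookkeeping; there is no real obstacle. The only point worth care is the commutation $[\Delta_q, \FP] = 0$: since $\FP f$ has the explicit form in \eqref{Pf} with $a,b,c$ being inner products of $f$ against fixed $\xi$-functions, applying $\Delta_q$ (a Fourier multiplier in $x$) yields $\FP(\Delta_q f)$ because $\Delta_q$ commutes with $\xi$-integration. Without this commutation, the right-hand side would only be bounded below by $\{\FI-\FP\}\Delta_q f$ rather than $\Delta_q \{\FI-\FP\} f$, and the passage to the Chemin-Lerner norm of $\{\FI-\FP\} f$ would not be automatic.
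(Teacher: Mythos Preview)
Your proposal is correct and follows exactly the approach indicated by the paper, which simply states that the lemma is implied by the coercivity property \eqref{coer} without spelling out the details. Your careful treatment of the commutation $[\Delta_q,\FP]=0$ and $[\Delta_q,L]=0$ (since $\Delta_q$ acts only in $x$ while $L$ and $\FP$ act only in $\xi$) is precisely the point needed to reduce everything to \eqref{coer}, and the subsequent integration in time, square root, and dyadic summation are routine.
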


It is known that $K=K_2-K_1$ defined in \eqref{def.k1} and \eqref{K.def.} is a self-adjoint compact operator on $L^2_\xi$ (cf.~\cite{CIP-Book}) and it enjoys the following estimate.

\begin{lemma}\label{K.es.}
Let $(\cdot,\cdot)$ be the inner product on $L^2_{x,\xi}$.  It holds that
\begin{equation}\label{K.es.p1}
(\Delta_qKf, \Delta_qg)\leq C\|\Delta_qf\|_{L^2_{\xi}L^2_{x}}\|\Delta_qg\|_{L^2_{\xi}L^2_{x}},
\end{equation}
for each $q\geq -1$, where $C$ is a constant independent of $q$, $f$ and $g$.
\end{lemma}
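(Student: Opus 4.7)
The plan is to exploit the fact that $K$ acts only on the velocity variable $\xi$ while $\Delta_q$ acts only on the spatial variable $x$, so that the two operators commute. Hence $\Delta_q K f = K \Delta_q f$, and the inequality reduces to a pointwise-in-$x$ application of the standard $L^2_\xi$-boundedness of $K$ followed by Cauchy--Schwarz in $x$.

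More precisely, first I would recall the classical fact (see \cite{CIP-Book}, or derive it directly using $B_0(\theta)\leq C|\cos\theta|$, the hard-potential bound $|\xi-\xi_\ast|^\gamma\leq (1+|\xi|)(1+|\xi_\ast|)$, and the decay of $\mu^{1/2}$) that the integral operator $K=K_2-K_1$ defined by \eqref{def.k1}--\eqref{K.def.} has a kernel $k(\xi,\xi_\ast)$ satisfying
\[
\int_{\R^3}|k(\xi,\xi_\ast)|\,d\xi_\ast+\int_{\R^3}|k(\xi,\xi_\ast)|\,d\xi\leq C,
\]
so by Schur's lemma $K$ is a bounded self-adjoint operator on $L^2_\xi$ with some operator norm $\|K\|\leq C_0$ independent of $x$, $t$, and $q$.

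Next, since $\Delta_q$ is a Fourier multiplier in $x$ only, I would observe $\Delta_q(Kf)(x,\xi)=K(\Delta_q f)(x,\xi)$ for every $q\geq-1$. Therefore, for each fixed $x\in\R^3$,
\[
\bigl|(\Delta_q K f(x,\cdot),\Delta_q g(x,\cdot))_{L^2_\xi}\bigr|
=\bigl|(K\Delta_q f(x,\cdot),\Delta_q g(x,\cdot))_{L^2_\xi}\bigr|
\leq C_0\|\Delta_q f(x,\cdot)\|_{L^2_\xi}\|\Delta_q g(x,\cdot)\|_{L^2_\xi}.
\]
Integrating in $x$ and applying Cauchy--Schwarz then yields
\[
(\Delta_q K f,\Delta_q g)
\leq C_0\int_{\R^3}\|\Delta_q f(x,\cdot)\|_{L^2_\xi}\|\Delta_q g(x,\cdot)\|_{L^2_\xi}\,dx
\leq C_0\|\Delta_q f\|_{L^2_x L^2_\xi}\|\Delta_q g\|_{L^2_x L^2_\xi},
\]
and by Fubini this is precisely the right-hand side of \eqref{K.es.p1}. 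The constant $C=C_0$ is independent of $q$, $f$, and $g$ as required.

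There is no real obstacle here: the only nontrivial ingredient is the $L^2_\xi$-boundedness of the Grad kernel, which is classical and uniform in the auxiliary variables. The commutation of $\Delta_q$ with $K$ removes any interaction between the Littlewood--Paley decomposition and the velocity integral, so no dyadic bookkeeping is needed.
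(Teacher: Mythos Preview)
Your proof is correct and follows essentially the same approach as the paper: both rely on the $L^2_\xi$-boundedness of the integral operator $K$ (via its kernel representation) together with Cauchy--Schwarz. Your version is simply more explicit about the commutation $\Delta_q K = K\Delta_q$ and the Schur-type bound on the kernel, whereas the paper's proof states these ingredients tersely.
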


\begin{proof}
$Kg$ can be written as
$$
Kg=\int_{\R^3}\mathcal {K}(\xi,\xi_\ast)g(\xi_\ast)\,d\xi_\ast,
$$
and $\mathcal {K}(\xi,\xi_\ast)$ is a bounded operator from $L_{\xi}^2$ to $L_{\xi}^2$. Then \eqref{K.es.p1} follows from
Cauchy-Schwarz inequality.
\end{proof}


{In addition,  for the  convenience of readers we list some basic facts which are  frequently used in the paper.}

\begin{lemma}
Let $1\leq p\leq\infty$, then
\begin{equation}\label{bdop}
\|\Delta_q\cdot\|_{L^p_x}\leq C\|\cdot\|_{L^p_x},\ \  \|S_{q}\cdot\|_{L^p_x}\leq C\|\cdot\|_{L^p_x},
\end{equation}
where $C$ is a constant independent of $p$ and $q$.
\end{lemma}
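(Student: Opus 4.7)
The plan is to realize $\Delta_q$ and $S_q$ as convolution operators against rescaled Schwartz kernels and to invoke Young's convolution inequality. From the definitions recalled in Section~2, for $q\geq 0$ one has $\Delta_q f = \psi_q * f$ with $\psi_q(y) = 2^{3q}\psi(2^q y)$ and $\psi = \mathcal{F}^{-1}\varphi \in \mathcal{S}(\mathbb{R}^3)$, while $\Delta_{-1} f = \tilde{\psi} * f$ with $\tilde{\psi} = \mathcal{F}^{-1}\chi \in \mathcal{S}(\mathbb{R}^3)$. Since $\psi, \tilde{\psi}$ are Schwartz functions, they lie in $L^1(\mathbb{R}^3)$. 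Young's convolution inequality then yields $\|\psi_q * f\|_{L^p_x} \leq \|\psi_q\|_{L^1_x}\|f\|_{L^p_x}$ for every $1 \leq p \leq \infty$, and the change of variables $z = 2^q y$ gives $\|\psi_q\|_{L^1_x} = \|\psi\|_{L^1_x}$ independently of $q$. Combined with the identical bound for $\Delta_{-1}$, the first inequality in \eqref{bdop} holds with $C = \max\{\|\psi\|_{L^1_x}, \|\tilde{\psi}\|_{L^1_x}\}$, uniformly in $p$ and $q$.

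For $S_q$, the strategy is to recognize it as a single convolution rather than a finite sum, so as to avoid accumulating constants. Telescoping the partition of unity $\chi(k) + \sum_{j\geq 0}\varphi(2^{-j}k) = 1$ (equivalently, applied at scale $2^{-q}$) yields the identity $\chi(2^{-q}k) = \chi(k) + \sum_{0\leq j\leq q-1}\varphi(2^{-j}k)$ for $q\geq 1$, so that $S_q f = \chi(2^{-q}D) f = \tilde{\psi}^{(q)} * f$ with $\tilde{\psi}^{(q)}(y) = 2^{3q}\tilde{\psi}(2^q y)$. The conventions $S_0 f = \Delta_{-1} f$ and $S_{-1} f = 0$ are immediate special cases. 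The same scale-invariance argument gives $\|\tilde{\psi}^{(q)}\|_{L^1_x} = \|\tilde{\psi}\|_{L^1_x}$, so Young's inequality closes the second bound in \eqref{bdop}.

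There is no genuine obstacle here: this lemma is the standard Littlewood--Paley kernel-bound backbone that underlies the trilinear estimates of Section~3 and the iterative construction of Section~7. The only points to keep track of are the uniformity in the endpoint cases $p = 1, \infty$ (automatic from the statement of Young's inequality) and the consistent treatment of the low-frequency block $\Delta_{-1}$, which is why one splits off the $q = -1$ case and uses the kernel $\tilde\psi$ rather than a dyadic rescaling of $\psi$.
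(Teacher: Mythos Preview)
Your proof is correct and complete. The paper itself does not supply a proof of this lemma: it is listed in the Appendix among ``basic facts which are frequently used in the paper'' and left unproved. Your argument via Young's convolution inequality and the scale-invariance of the $L^1$ norm of the dyadic kernels is exactly the standard justification, and your observation that $S_q = \chi(2^{-q}D)$ (so that $S_q$ is a single convolution with a rescaled copy of $\tilde\psi$, rather than a sum of $q$ convolutions) is the clean way to obtain a constant genuinely independent of $q$.
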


\begin{lemma}\label{B.IM.}
Let $1\leq \varrho, p, r\leq\infty$, if $s>0$, then
\begin{equation}\label{n-h-b}
\|\na_x\cdot\|_{\widetilde{L}^\varrho_T(\dot{B}_{p,r}^s)}\sim\|\cdot\|_{\widetilde{L}^\varrho_T(\dot{B}_{p,r}^{s+1})},\ \ \|\cdot\|_{\widetilde{L}^\varrho_T(\dot{B}_{p,r}^{s})}\lesssim\|\cdot\|_{\widetilde{L}^\varrho_T(B_{p,r}^{s})}.
\end{equation}
\end{lemma}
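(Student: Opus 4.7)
}

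Both assertions are classical features of (homogeneous and nonhomogeneous) Besov spaces, and the plan is to lift the pointwise-in-$t$ statements for the Besov seminorms to the Chemin-Lerner refinement using only that the relevant operators are Fourier multipliers in $x$ alone, hence commute with the time-integration that enters the $\widetilde{L}^{\varrho}_{T}$ norm.

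For the first equivalence, the key point is Bernstein's inequality applied to each dyadic block. Since $\dot{\Delta}_{q}f$ has Fourier support in the annulus $\{3/4\cdot 2^{q}\leq|k|\leq 8/3\cdot 2^{q}\}$, one has $\|\nabla_{x}\dot{\Delta}_{q}f\|_{L^{p}_{x}}\sim 2^{q}\|\dot{\Delta}_{q}f\|_{L^{p}_{x}}$, with the upper bound following directly from Bernstein and the lower bound from the fact that the Fourier multiplier $ik\,\varphi(2^{-q}k)/|k|^{2}$ is uniformly bounded on $L^{p}$ after rescaling. Since $\dot{\Delta}_{q}$ and $\nabla_{x}$ commute, taking $L^{\varrho}_{T}$ norm first and then the weighted $\ell^{r}$ summation yields $2^{qs}\|\dot{\Delta}_{q}\nabla_{x}f\|_{L^{\varrho}_{T}L^{p}_{x}}\sim 2^{q(s+1)}\|\dot{\Delta}_{q}f\|_{L^{\varrho}_{T}L^{p}_{x}}$, which is exactly $\|\nabla_{x}f\|_{\widetilde{L}^{\varrho}_{T}(\dot{B}^{s}_{p,r})}\sim\|f\|_{\widetilde{L}^{\varrho}_{T}(\dot{B}^{s+1}_{p,r})}$.

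For the embedding $\widetilde{L}^{\varrho}_{T}(B^{s}_{p,r})\hookrightarrow\widetilde{L}^{\varrho}_{T}(\dot{B}^{s}_{p,r})$, the plan is to split the homogeneous sum $\sum_{q\in\mathbb{Z}}$ into high- and low-frequency pieces. For $q\geq 1$ the operators coincide, $\dot{\Delta}_{q}=\Delta_{q}$, so those contributions are identical. For $q\leq -2$, the Fourier support of $\dot{\Delta}_{q}$ lies inside $\{|k|\leq 8/3\cdot 2^{-2}\}$, which is disjoint from the support of $\varphi(2^{-j}\cdot)$ for every $j\geq 0$; consequently $\dot{\Delta}_{q}f=\dot{\Delta}_{q}\Delta_{-1}f$ and the uniform $L^{p}$-boundedness of $\dot{\Delta}_{q}$ (from \eqref{bdop}) gives $\|\dot{\Delta}_{q}f\|_{L^{\varrho}_{T}L^{p}_{x}}\lesssim\|\Delta_{-1}f\|_{L^{\varrho}_{T}L^{p}_{x}}$. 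The remaining indices $q\in\{-1,0\}$ involve only finitely many nonhomogeneous blocks and are handled trivially. Summing with the weight $2^{qs}$ and using $s>0$ to sum the geometric series $\sum_{q\leq -2}2^{qsr}<\infty$ yields the claim.

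The main obstacle is of a purely bookkeeping nature, namely verifying that reversing the order between the $q$-weighted sum and the $L^{\varrho}_{T}L^{p}_{x}$ norm costs nothing because the convolution kernels defining $\Delta_{q}$, $\dot{\Delta}_{q}$ act on $x$ only; once this is observed, the classical pointwise-in-$t$ proofs from \cite{BCD} transcribe verbatim to the Chemin-Lerner setting.
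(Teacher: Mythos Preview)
Your proposal is correct and follows the same route the paper indicates: the paper does not give a detailed proof but simply remarks that the first equivalence comes from the classical Bernstein inequality (as in \cite{BCD}) and that the second embedding is taken from \cite{XK}. Your argument for the first part is exactly Bernstein applied blockwise and lifted to the Chemin-Lerner norm; for the second part you supply a self-contained high/low-frequency splitting (using $\dot{\Delta}_{q}=\Delta_{q}$ for $q\geq 0$, $\dot{\Delta}_{q}f=\dot{\Delta}_{q}\Delta_{-1}f$ for $q$ very negative, and summability of $2^{qs}$ for $s>0$) in place of the paper's bare citation, which is a mild gain in transparency but not a different idea.
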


{We would like to mention that the first relation can be achieved by the classical Bernstein inequality (see, e.g., \cite{BCD}) and another follows from  the recent fact in \cite{XK}, which indicates the relation between homogeneous and
inhomogeneous Chemin-Lerner spaces.

{Finally, the Chemin-Lerner type spaces $\widetilde{L}_T^{\varrho_1}\widetilde{L}_\xi^{\varrho_2}(B^{s}_{p,r})$
may be linked with the classical spaces $L_T^{\varrho_1}L_\xi^{\varrho_2}(B^{s}_{p,r})$ in the following way.

\begin{lemma}\label{BCLsb}
Let $1\leq \varrho_1, \varrho_2, p, r\leq\infty$ and $s\in \R$.

\noindent$(1)$ If $r\geq \max\{\varrho_1, \varrho_2\}$, then
\begin{equation}\label{BCLs}
\|f\|_{\widetilde{L}_T^{\varrho_1}\widetilde{L}_\xi^{\varrho_2}(B^{s}_{p,r})}
\leq\|f\|_{L_T^{\varrho_1}L_\xi^{\varrho_2}(B^{s}_{p,r})}.
\end{equation}
$(2)$ If $r\leq \min\{\varrho_1, \varrho_2\}$, then
\begin{equation}\label{BCLb}
\|f\|_{\widetilde{L}_T^{\varrho_1}\widetilde{L}_\xi^{\varrho_2}(B^{s}_{p,r})}
\geq\|f\|_{L_T^{\varrho_1}L_\xi^{\varrho_2}(B^{s}_{p,r})}.
\end{equation}
\end{lemma}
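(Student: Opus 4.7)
The plan is to express both norms as iterated norms of the same nonnegative quantity and then appeal to the generalized Minkowski inequality, with the direction of the resulting inequality determined by where $r$ sits relative to $\varrho_{1}$ and $\varrho_{2}$. Setting
$$
a_{q}(t,\xi)=2^{qs}\|\Delta_{q}f(t,\cdot,\xi)\|_{L_{x}^{p}},\qquad q\geq -1,
$$
one reads off directly from the definitions that
$$
\|f\|_{\widetilde{L}_{T}^{\varrho_{1}}\widetilde{L}_{\xi}^{\varrho_{2}}(B^{s}_{p,r})}=\bigl\|(a_{q})\bigr\|_{\ell^{r}_{q}(L^{\varrho_{1}}_{T}(L^{\varrho_{2}}_{\xi}))},\qquad
\|f\|_{L_{T}^{\varrho_{1}}L_{\xi}^{\varrho_{2}}(B^{s}_{p,r})}=\bigl\|(a_{q})\bigr\|_{L^{\varrho_{1}}_{T}(L^{\varrho_{2}}_{\xi}(\ell^{r}_{q}))},
$$
so comparing the two norms reduces to moving the sequence factor $\ell^{r}_{q}$ past $L^{\varrho_{1}}_{T}$ and then past $L^{\varrho_{2}}_{\xi}$, one at a time.

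The workhorse is the generalized Minkowski integral inequality in the form: for any $1\leq \alpha\leq\beta\leq\infty$ and any nonnegative measurable $g(y,z)$,
$$
\bigl\|\,\|g\|_{L^{\alpha}_{y}}\,\bigr\|_{L^{\beta}_{z}}\leq \bigl\|\,\|g\|_{L^{\beta}_{z}}\,\bigr\|_{L^{\alpha}_{y}},
$$
which remains valid when one of the factors is replaced by $\ell^{r}$ endowed with counting measure. Equivalently, swapping the two norms so that the larger of the two exponents moves from the outer slot to the inner slot can only increase the value. For $(1)$, the hypothesis $r\geq\max\{\varrho_{1},\varrho_{2}\}$ means that in each of the two swaps---first between $\ell^{r}_{q}$ and $L^{\varrho_{1}}_{T}$, then between $\ell^{r}_{q}$ and $L^{\varrho_{2}}_{\xi}$---the sequence norm plays the role of the larger exponent $\beta$; passing $\ell^{r}_{q}$ from outside to inside therefore only increases the value, so the chain runs upward from the CL norm to the classical one, giving \eqref{BCLs}. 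For $(2)$, the hypothesis $r\leq\min\{\varrho_{1},\varrho_{2}\}$ makes $\ell^{r}_{q}$ the smaller exponent at each swap, both inequalities reverse, and one obtains \eqref{BCLb}.

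No genuine obstacle is anticipated; the only care required will be in the endpoint cases when some of $r,\varrho_{1},\varrho_{2}$ equal $1$ or $\infty$, where the essential supremum must be used and Minkowski is applied with the usual conventions. Since the whole argument is just two applications of Minkowski in directions fixed by the hypotheses, the ``hard part'' is nothing more than keeping the bookkeeping of which of $\ell^r$, $L^{\varrho_1}_T$, $L^{\varrho_2}_\xi$ is momentarily playing the role of $\alpha$ or $\beta$.
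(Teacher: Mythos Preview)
Your proposal is correct and takes essentially the same approach as the paper: two applications of the generalized Minkowski inequality to interchange the $\ell^{r}_{q}$ norm with $L^{\varrho_{1}}_{T}$ and $L^{\varrho_{2}}_{\xi}$ in succession, with the direction of the inequality governed by where $r$ lies relative to $\varrho_{1},\varrho_{2}$. The paper writes out the computation explicitly with the integral expressions (raising to the powers $\varrho_{2}/r\geq 1$ and $\varrho_{1}/r\geq 1$ in case (2)), whereas you phrase it abstractly via iterated norms, but the content is identical.
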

\begin{proof}
We only prove \eqref{BCLb} in terms of $1\leq r, \varrho_2, \varrho_1<+\infty$, the other cases and \eqref{BCLs} can be proved similarly.

Since $\varrho_2/r\geq1$ and $\varrho_1/r\geq1$, by applying Generalized Minkowski's inequality twice, one can see that
\begin{equation*}\label{BCLb.p1}
\begin{split}
\|f\|_{\Red{L_T^{\varrho_1}}L_\xi^{\varrho_2}(B_{p,r}^s)}
=&\left(\int_{0}^T\left(
\int_{\R^3}\left(\sum_{q\geq-1}2^{qsr}\|\Delta_{q}f\|^r_{L_x^{p}}\right)^{\varrho_2/r}d\xi\right)
^{\varrho_1/\varrho_2}dt\right)^{1/\varrho_1}\\
=&\left(\int_{0}^T\left(
\int_{\R^3}\left(\sum_{q\geq-1}2^{qsr}\|\Delta_{q}f\|^r_{L_x^{p}}\right)^{\varrho_2/r}d\xi\right)
^{\frac{r}{\varrho_2}\cdot\frac{\varrho_1}{r}}dt\right)^{1/\varrho_1}\\
\leq&\left(\int_{0}^T\left(\sum_{q\geq-1}2^{qsr}\left(
\int_{\R^3}\|\Delta_{q}f\|^{\varrho_2}_{L_x^{p}}d\xi\right)
^{{r/\varrho_2}}\right)^{\frac{\varrho_1}{r}}dt\right)^{1/\varrho_1}\\
=&\left(\int_{0}^T\left(\sum_{q\geq-1}2^{qsr}\left(
\int_{\R^3}\|\Delta_{q}f\|^{\varrho_2}_{L_x^{p}}d\xi\right)
^{{r/\varrho_2}}\right)^{\frac{\varrho_1}{r}}dt\right)^{\frac{r}{\varrho_1}\cdot\frac{1}{r}}\\
\leq&\left(\sum_{q\geq-1}2^{qsr}\left(\int_{0}^T\left(
\int_{\R^3}\|\Delta_{q}f\|^{\varrho_2}_{L_x^{p}}d\xi\right)
^{\varrho_1/\varrho_2}dt\right)^{r/\varrho_1}\right)^{1/r}\\[2mm]
=&\|f\|_{{\widetilde{L}_T^{\varrho_1}}\widetilde{L}_\xi^{\varrho_2}(B^{s}_{p,r})}.
\end{split}
\end{equation*}
Thus Lemma \ref{BCLsb} holds true.
\end{proof}

\noindent {\bf Acknowledgements:} RJD was supported by the General Research Fund (Project No.~400912) from RGC of Hong Kong. SQL was
supported by grants from the National Natural Science Foundation of China under contracts 11101188 and 11271160. J. Xu was supported by
the NSFC (11001127), Program for New Century
Excellent Talents in University, and Special
Foundation of China Postdoctoral Science Foundation (2012T50466). The authors would delicate the paper to Professor Shuichi Kawashima on his 60th birthday.


\end{document}